\newtheorem{theorem}{Theorem}
\newtheorem{corollary}[theorem]{Corollary}
\newtheorem{definition}[theorem]{Definition}
\newtheorem{lemma}[theorem]{Lemma}
\newtheorem{notation}[theorem]{Notation}
\newtheorem{remark}[theorem]{Remark}
\def\be{\begin{equation}}
\def\ee{\end{equation}}
\def\bea{\begin{eqnarray}}
\def\eea{\end{eqnarray}}
\def\hsm1{\hspace{-1mm}}
\newcommand{\correction}[2]{#2}
\newcommand{\ch}[1]{\includegraphics[height=0.37cm]{#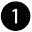}}
\providecommand\underarrow@[3]{%
  \vtop{\ialign{##\crcr$\m@th\hfil#2#3\hfil$\crcr
  \noalign{\nointerlineskip\kern.12\baselineskip}#1#2\crcr}}}
\providecommand{\underrightarrow}{%
  \mathpalette{\underarrow@\rightarrowfill@}}
\providecommand\rightarrowfill@{\arrowfill@\relbar\relbar\rightarrow}
\providecommand\arrowfill@[4]{%
  $\m@th\thickmuskip0mu\medmuskip\thickmuskip\thinmuskip\thickmuskip
   \relax#4#1\mkern-7mu%
   \cleaders\hbox{$#4\mkern-2mu#2\mkern-2mu$}\hfill
   \mkern-7mu#3$%
}
\begin{document}
\title[Persistence of NHIMs in the absence of rate conditions]{Persistence of normally hyperbolic invariant manifolds in the absence of rate conditions}

\author{Maciej J. Capi\'nski\footnote{Partially supported by the NCN grants  2015/19/B/ST1/01454, 2016/22/A/ST1/00077, 2016/21/B/ST1/02453 and by the Faculty of Applied Mathematics AGH UST statutory tasks 11.11.420.004 within subsidy of Ministry of Science and Higher Education.}}
\ead{maciej.capinski@agh.edu.pl}
\address{AGH University of Science and Technology, al. Mickiewicza 30, 30-059 Krak\'ow, Poland}

\author{Hieronim Kubica\footnote{Partially supported by the NCN grant  2016/21/B/ST1/02453, and by the Faculty of Applied Mathematics AGH UST dean grant for PhD students and young researchers within subsidy of Ministry of Science and Higher Education.} }
\ead{kubica@agh.edu.pl}
\address{AGH University of Science and Technology, al. Mickiewicza 30, 30-059 Krak\'ow, Poland}

\begin{abstract}
We consider perturbations of normally hyperbolic invariant manifolds, under which they can lose their hyperbolic properties. 
We show that if the perturbed map which drives the dynamical system preserves the properties of topological expansion and contraction,
then the manifold is perturbed to an invariant set. The main feature is that our results do not require the rate conditions to hold after the perturbation. In this case the manifold can be perturbed to an invariant set, which is not a topological manifold. We work in the setting of nonorientable Banach vector bundles, without needing to assume invertibility of the map.

\end{abstract}





\section{Introduction}

We will be investigating the persistence under perturbations of invariant sets that are associated with normally hyperbolic invariant manifolds (NHIMs). These perturbations will be such that the manifolds lose their hyperbolic properties. 

To be more precise, a manifold $\Lambda$ is said to be a NHIM if it is invariant for a dynamical system and there is a splitting of the state space into three invariant subbundles. One is the tangent bundle to $\Lambda$, the second is the unstable bundle and the third is the stable bundle. The dynamics on the stable bundle is contracting and on the unstable bundle -- expanding. The key feature for $\Lambda$ to be normally hyperbolic is that the dynamics on the bundle tangent to $\Lambda$ is weaker than the dynamics on the stable and the unstable bundles. The property of the dominance of the dynamics on the stable/unstable bundles over the tangent bundle is formulated in terms the rate conditions, introduced by Fenichel \cite{Fenichel1,Fenichel4,Fenichel3,Fenichel2}, Hirsch, Pugh, Shub \cite{HPS}, and later developed by Chaperon \cite{Chap1,Chap2,Chap3}.

The main property of NHIMs is that they persist under perturbations. As long as the rate conditions hold, the manifold is present.
There are examples though \cite{GonDroJun2014, GonJun2015, Jarnik, TerTodKom2015} for which, in the absence of rate conditions, an invariant manifold can be destroyed to a set which is not even a topological manifold. However, this does not mean that the manifold vanishes or that it is completely destroyed. 

This problem has been studied by Floer in \cite{Floer2,Floer}. He introduced a method, which allowed him to establish continuation of NHIMs to invariant sets which preserve the cohomology ring of the manifold under perturbation. 
We take a different approach, which is based on a good topological alignment expressed by homotopy conditions. We establish existence of an invariant set whose projection onto the base manifold $\Lambda$ is equal to the whole $\Lambda$. The advantage of our method is that it does not rely on the prior existence of a normally hyperbolic invariant manifold and neither does it use perturbation theory. Moreover, we prove a continuation theorem for invariant sets of continuous one-parameter families of maps under the assumption of correct topological alignment. To be more precise, we show that if we extend the system to include the parameter, then, in such an extended phase space, there exists a compact connected component consisting of points belonging to the invariant sets of maps corresponding to varying parameter values.

Our result does not contradict the work of Ma\~n\'e \cite{Mane}. He shows that if a manifold is persistent, then it has to be normally hyperbolic. What we establish though is not persistence of manifolds, but persistence of sets. 
In fact, we do not need the normally hyperbolic invariant manifold to exist. If we have a family of maps that satisfy our topological assumptions, then we will have persistence of the family of their invariant sets.

The main features of our results are the following. Our work is written in the context of Banach vector bundles, without any orientability assumptions. We establish the existence of non-empty invariant sets for discrete dynamical systems. These sets are not only non-empty, but also have non-empty intersections with each fiber of the vector bundle, meaning that they project surjectively onto the base manifold. We do not need to assume that our map is invertible. We do not need a normally hyperbolic manifold prior to perturbation; our method can be used to establish the existence of invariant sets with `topologically normally hyperbolic' properties. If the assumptions of our theorems are verified, then we obtain the existence of  invariant sets within their specific, explicitly given neighborhood. Verification can be performed using rigorous, interval arithmetic numerics, leading to computer assisted proofs. Our results are written in the context of discrete dynamical systems, but they can also be applied to ODEs by considering a time-shift map.

Our approach is based on the method of covering relations \cite{GZ2,Z-conecond,GZ1}. The following results can be thought of as its generalization to vector bundles. Covering relations have proven to be a useful tool that, combined with cone conditions, leads to geometric proofs of normally hyperbolic invariant manifold theorems \cite{CZ1,CZ2}. These results, however, rely also on a form of rate conditions, expressed in terms of cone conditions. Another result in this flavour is \cite{BB}, which contains another geometric version of the normally hyperbolic invariant manifold theorem. Although again, it relies on rate conditions and on perturbative methods. Our work is closely related to \cite{Cap}, which can also be applied in the absence of rate conditions. The difference is that in \cite{Cap} only the case of trivial vector bundles and invertible maps was considered. This paper can be thought of as a generalization of \cite{Cap} to the setting of general, possibly nonoriantable vector bundles, without the assumption on invertibility of the map. Moreover, in the present work we obtain a continuation result, which states that in the state space extended to include a parameter, the invariant sets for a family of maps 
contain a connected component which links them together.


The paper is organized as follows. Section \ref{sec:preliminaries} contains preliminaries. There we set up our notations used for vector bundles and introduce the notion of an intersection number. The intersection number is a standard tool in differential topology, which can be used to detect intersections of manifolds based on their homotopy properties. 
In section \ref{sec:main} we state our main results, which are formulated in Theorems \ref{th:forward-invariance}, \ref{th:invariance}, \ref{th:forward-continuation} and \ref{th:continuation}, we also show that normal hyperbolicity implies the assumptions of the theorems, and give an example of application. Sections \ref{proof:forward-invariance}, \ref{proof:invariance}, \ref{proof:forward-continuation} and \ref{proof:continuation} contain the proofs of the four theorems. Section \ref{sec:nhim-covers} contains the proof of the fact that normal hyperbolicity implies topological covering. Section \ref{sec:ack} contains acknowledgements. To keep the paper self-contained and also since our approach to the intersection number is slightly non-standard (we allow our manifolds to have boundaries), we add the construction of the intersection number in  \ref{sec:app}.
\section{Preliminaries\label{sec:preliminaries}}

\subsection{Notations}

For a set $A$ \correction{comment 11}{in some topological space }we use $\partial A$ to denote its boundary, $\overline{A}$ to denote its closure, and $\mathrm{int}A$ to denote its interior. We write $\#A$ to denote the cardinality of $A$.

For a compact connected manifold $\Lambda$ and a continuous map $f:\Lambda
\to\Lambda$ we shall use $\deg_{2}f$ to denote the degree modulo $2$ of $f$
(see \cite{Hirsch} for details).

For two sets $A, B\subset\mathbb{R}^{n}$ we shall use $\mathrm{dist}(A,B)$ to denote the distance between them. We will use the
notation $B_{\mathbb{R}^{n}}(x,r)$ to stand for an open ball centered at $x$ of
radius $r$ in $\mathbb{R}^{n}$.

\subsection{Banach vector bundles}

In this section we set up some notations for Banach vector bundles, which will
be used throughout the paper.

Let $\Lambda$ be a topological space. We recall that a \emph{vector bundle of
rank $k$} over $\Lambda$ is a topological space $E$ together with a surjective
continuous map $\pi:E\rightarrow\Lambda$ satisfying the following conditions:

\begin{enumerate}
\item For all $\theta\in\Lambda$, the \emph{fiber} $E_{\theta}:=\pi
^{-1}(\theta)$ over $\theta$ is a $k$-dimensional vector space.

\item For every $\theta\in\Lambda$ there exists an open neighborhood
$U_{\theta}$ of $\theta$ in $\Lambda$ and a homeomorphism 
\[
\Phi_{\theta}: \pi^{-1}(U_{\theta}) \to U_{\theta}\times\mathbb{R}^{k},
\]
called a \emph{local trivialization of $E$ over $U_{\theta}$}, such that:

\begin{itemize}
\item $\pi_{\theta}\circ\Phi_{\theta}=\pi$, where $\pi_{\theta}:U_{\theta
}\times\mathbb{R}^{k}\rightarrow U_{\theta}$ is the projection on $U_{\theta}$.

\item For every $\lambda\in U_{\theta}$ the restriction of $\Phi_{\theta}$ to
the fiber $E_{\lambda}$
\[
\Phi_{\theta}|_{E_{\lambda}}: E_{\lambda}\to\{ \lambda\} \times\mathbb{R}^{k}
\cong\mathbb{R}^{k}%
\]
is a vector space isomorphism. The set $U_{\theta}$ is called the \emph{base}
of the local trivialization $\Phi_{\theta}$.
\end{itemize}
\end{enumerate}

The space $E$ is called the \emph{total space} of the bundle, $\Lambda$ is
called its \emph{base}, and $\pi$ is its \emph{projection}. In our paper we
will be dealing with smooth vector bundles, meaning that $\Lambda$ and $E$
will be smooth manifolds and the projection will be a smooth map.

When $\Phi_{\theta_{1}}:\pi^{-1}\left(  U_{\theta_{1}}\right)  \rightarrow
U_{\theta_{1}}\times\mathbb{R}^{k}$ and $\Phi_{\theta_{2}}:\pi^{-1}\left(
U_{\theta_{2}}\right)  \rightarrow U_{\theta_{2}}\times\mathbb{R}^{k}$ are two
local trivializations of $E$ such that $U_{\theta_{1}}\cap U_{\theta_{2}}%
\neq\emptyset$, and $\lambda\in U_{\theta_{1}}\cap U_{\theta_{2}}$, the
function
\[
\left(  \pi_{\mathbb{R}^{k}}\circ\Phi_{\theta_{2}}|_{{E_{\lambda}}}\right)
\circ\left(  \pi_{\mathbb{R}^{k}}\circ\Phi_{\theta_{1}}|_{E_{\lambda}}\right)
^{-1}:\mathbb{R}^{k}\rightarrow\mathbb{R}^{k}%
\]
is called a \emph{transition function} between local trivializations.

If we are given a vector bundle $\pi:E\rightarrow\Lambda$ with a fixed
collection of local trivializations $\{\Phi_{\theta}:\pi^{-1}(U_{\theta
})\rightarrow U_{\theta}\times\mathbb{R}^{k}\}$ whose bases form an open cover
$\mathcal{U}=\{U_{\theta}\}$ of $\Lambda$, then we call it a \emph{Banach
vector bundle} provided that all transition functions between local
trivializations with overlapping bases are isometries.

Henceforth, we shall assume that every vector bundle we work with is a
Banach vector bundle even if it is not explicitly pronounced.

For Banach vector bundles we are able to introduce a meaningful notion of a
norm on fibers as follows. For every $v\in E$ such that $\pi(v)\in U_{\theta}%
$, where $U_{\theta}$ is trivialized by $\Phi_{\theta}$, we define
\[
\Vert v\Vert:=\Vert{\pi_{\mathbb{R}^{k}}\circ\Phi_{\theta}(v)}\Vert
_{\mathrm{Eucl}},
\]
where $\Vert\cdot\Vert_{\mathrm{Eucl}}$ is the Euclidean norm on
$\mathbb{R}^{k}$. Since all transition functions between local trivializations
with overlapping bases are isometries, we see that $\left\Vert v\right\Vert $
does not depend on the choice of $\Phi_{\theta}$.
\correction{comment 12}{\begin{remark}\label{rem:Banach}
We use the name \emph{Banach vector bundle} since in our setting the fibres are finite dimensional Banach spaces. By writing \emph{Banach vector bundle} we implicitly assume that the transition functions are isometries, which is somewhat non-standard and needs to be emphasised. Moreover, we do not consider vector bundles with infinite-dimensional fibers, which the prefix `Banach' is often assumed to imply.
\end{remark}}
\begin{remark}
\label{rem:norm-interpretation}For  $v\in E$ the notation $\Vert v\Vert$
should be understood as the norm on the fiber $E_{\pi(v)}$. (It makes no sense
to talk of a norm on $E$, since it is not a vector space.)
\end{remark}

\subsection{Whitney sum of Banach vector bundles}

Consider a smooth manifold $\Lambda$, a rank-$u$ smooth Banach vector bundle
$\pi^{u}:E^{u}\rightarrow\Lambda$ with a fixed collection of local
trivializations
\[
\left\{  \Phi_{\theta}^{u}:(\pi^{u})^{-1}(U_{\theta})\rightarrow U_{\theta
}\times\mathbb{R}^{u}|\;U_{\theta}\text{ cover }\Lambda\right\}
\]
inducing a Banach space structure on the fibers of the total space $E^{u}$ and
a rank-$s$ smooth Banach vector bundle $\pi^{s}:E^{s}\rightarrow\Lambda$ with
fixed
\[
\left\{  \Phi_{\theta}^{s}:(\pi^{s})^{-1}(V_{\theta})\rightarrow V_{\theta
}\times\mathbb{R}^{s}|\;V_{\theta}\text{ cover }\Lambda\right\}
\]
inducing a Banach space structure on the fibers of $E^{s}$.

We combine the two vector bundles in what is called a \emph{Whitney sum} to
produce a new vector bundle $E=E^{u}\oplus E^{s}$ of rank $u+s$ over $\Lambda
$, defined as
\[
E=E^{u}\oplus E^{s}:=\bigsqcup_{\theta\in\Lambda}E_{\theta}^{u}\oplus
E_{\theta}^{s},
\]
where $\bigsqcup$ stands for the disjoint union. The fiber $E_{\theta}$ of $E$
over each $\theta\in\Lambda$ is the direct sum $E_{\theta}^{u}\oplus
E_{\theta}^{s}.$ The projection $\pi:E=E^{u}\oplus E^{s}\rightarrow\Lambda$ is
the natural one.

\begin{notation}
\label{not:whitney-triple}To represent a point $v\in E=E^{u}\oplus E^{s}$ we
shall identify it with a triple $(\theta;x,y)$, where $\theta=\pi(v)$ and
$v=\left(  x,y\right)  \in E_{\theta}^{u}\oplus E_{\theta}^{s}$. In other
words, by writing
\[
v=(\theta;x,y)\in E
\]
we intend to emphasize that $x\in E_{\theta}^{u}$ and $y\in
E_{\theta}^{s}$.
\end{notation}

For $W_{\theta}\subset\Lambda$ small enough so that $\Phi_{\theta}^{u}$ and
$\Phi_{\theta}^{s}$ are both defined over $W_{\theta}$, we define the local
trivializations $\Phi_{\theta}:\pi^{-1}(W_{\theta})\rightarrow W_{\theta
}\times\mathbb{R}^{u+s}$ in the natural way. For any $\lambda\in W_{\theta}$
and$\ v=\left(  \lambda;x,y\right)  \in E$,
\[
\Phi_{\theta}(\lambda;x,y):=(\lambda;\pi_{\mathbb{R}^{u}}\circ\Phi_{\theta
}^{u}(x),\pi_{\mathbb{R}^{s}}\circ\Phi_{\theta}^{s}(y)).
\]

We will write $\left\Vert x\right\Vert _{u}$ for the norm on the fiber
$E_{\pi^{u}(x)}^{u}$ and, similarly $\left\Vert y\right\Vert _{s}$ for the
norm on the fiber $E_{\pi^{s}(y)}^{s}$.

\subsection{Intersection number\label{sec:intersection-number}}

In this section we introduce the intersection number, which will be the main
tool in our proofs. It is a standard notion in differential topology. (We
suggest \cite{GP74, Hirsch} as references.)

Let $\overline{X}$ be a compact set (subset of some \correction{comment 13}{smooth manifold}). Assume that $\mathrm{int}\overline{X}=X$
is a smooth manifold. (We do not need to assume that \correction{comment 14}{$\overline{X}$ }is a
manifold with boundary; i.e. we do not need $\partial \overline{X}$ to be a smooth manifold.) Let $Y$
be a boundaryless smooth manifold. Let $Z$ be an embedded boundaryless smooth submanifold of
$Y$, let $\overline{Z}$ be its closure in $Y$, and let $\partial Z=\overline
{Z}\setminus Z$ (which, in general, can be empty). Assume $X$ and $Z$ to be of complementary dimension with
respect to $Y$, i.e., $\dim X+\dim Z=\dim Y.$

We say that a smooth map $f:X\rightarrow Y$ \emph{is transversal to }$Z$ if%
\[
Df_{x}\left(  T_{x}X\right)  +T_{f(x)}Z=T_{f(x)}Y
\]
for all $x\in f^{-1}(Z)$. ($T_{x}X$ stands for the tangent space to $X$ at
$x$; $Df_{x}$ denotes the differential of $f$ at $x$.)

\begin{figure}[ptb]
\begin{center}
\includegraphics[width=11cm]{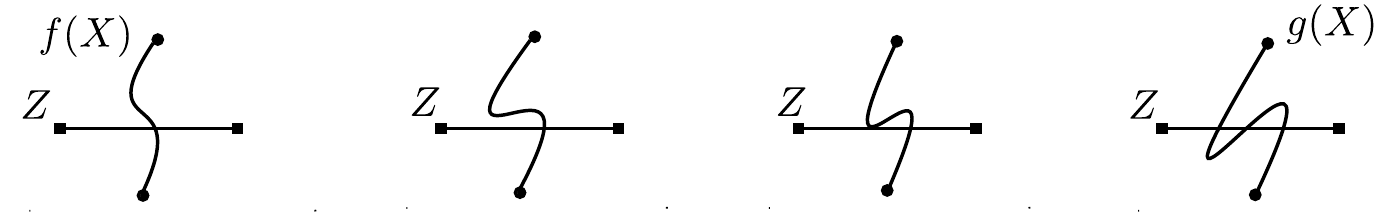}
\end{center}
\caption{Two admissible maps $f$ and $g$ homotopic through an admissible
homotopy. }%
\label{fig:inter-homotopy}%
\end{figure}

\begin{definition}
\label{def:admissible}We shall say that $f:\overline{X}\rightarrow Y$ is
\emph{admissible} if it is continuous and
\begin{eqnarray*}
f(\partial X)\cap\overline{Z}  & =\emptyset,\\
f\left(  \overline{X}\right)  \cap\partial Z  & =\emptyset.
\end{eqnarray*}

\end{definition}

\begin{definition}
\label{def:admissible-H}We shall say that a \correction{comment 15}{continuous }homotopy $H:[0,1]\times
\overline{X}\rightarrow Y$ is \emph{admissible} if (see Figure
\ref{fig:inter-homotopy})
\begin{eqnarray*}
H([0,1]\times\partial X)\cap\overline{Z}  & =\emptyset,\\
H([0,1]\times\overline{X})\cap\partial Z  & =\emptyset.
\end{eqnarray*}

\end{definition}

The\emph{ modulo }$2$\emph{ intersection number} for an admissible map
$f:\overline{X}\rightarrow Y$ and $\overline{Z}\subset Y$ is defined as a number
\[
I_{2}\left(  f,X,Z\right)  \in\left\{  0,1\right\}  ,
\]
which possesses the following properties:

\begin{itemize}
\item (Intersection number for transversal maps) If $f|_{X}$ is smooth and
transversal to $Z$ then
\[
I_{2}\left(  f,X,Z\right)  =\#f^{-1}\left(  Z\right)  \,\mathrm{mod}%
\,2.
\]

\item (Homotopy property) If $f,g$ are homotopic through an admissible
homotopy, then%
\[
I_{2}\left(  f,X,Z\right)  =I_{2}\left(  g,X,Z\right)  .
\]

\item (Intersection property) If $I_{2}\left(  f,X,Z\right)  =1$ then
$f\left(  X\right)  \cap Z$ is nonempty.

\item (Excision property) If $V$ is an open subset of $X$ such that $f\left(
X\right)  \cap Z=f\left(  V\right)  \cap Z$ and $f\left(
\partial V\right)  \cap\overline{Z}=\emptyset$ then
\[
I_{2}(f,X,Z)=I_{2}\left(  f|_{\overline{V}},V,Z\right)  .
\]

\end{itemize}

When $f:\overline{X}\rightarrow Y$, $\overline{V}\subset\overline{X}$ and
$f|_{\overline{V}}:\overline{V}\rightarrow Y$ is admissible, then we will
write $I_{2}\left(  f,V,Z\right)  $ instead of $I_{2}\left(  f|_{\overline{V}%
},V,Z\right)  $ to simplify notation.

In Figure \ref{fig:inter-homotopy} we find the intuition behind the
definition. There, while passing through an admissible homotopy, we encounter
a tangential intersection, but the number of transversal intersections is
either $1$ or $3$, so the mod $2$ intersection number is $1$. On the picture the
$f(\partial X)$ and $g(\partial X)$ are indicated by dots. These need to be
disjoint from $\overline{Z}$ throughout the admissible homotopy. The $\partial
Z$ is depicted with squares. It needs to be disjoint from the image of
$\overline{X}$ throughout the homotopy.

In the standard approach $X$ is assumed to be a compact
boundaryless manifold and $Z$ is assumed to be a closed boundaryless
submanifold of $Y$. Here we allow for $X$ and $Z$ to have boundaries, since
this will be convenient in our application. We deal with the boundary by
restricting to admissible maps and admissible homotopies, which rule out the
intersection for points from the boundaries. In such a case, the existence and
properties of the intersection number follow in the same way as the
construction for manifolds without boundary \cite{GP74, Hirsch}.

To keep the paper self-contained, and since allowing $X$ and $Z$ to have a
boundary is slightly nonstandard, we have added the construction of the
intersection number in \ref{sec:app}.

\correction{comment 17}{
\begin{remark} \label{rem:oriented-inter}
In the same way as above we can also allow $X$ and $Z$ to have boundaries in the case of the oriented intersection number. (See \cite{GP74, Hirsch} for the definition of the oriented intersection number.)
\end{remark}}

\section{Main results\label{sec:main}}

Assume that $\Lambda$ is a compact smooth $c$-dimensional manifold without
boundary, $E^{u}$, $E^{s}$ are smooth Banach vector bundles over $\Lambda$,
and that $E=E^{u}\oplus E^{s}$. We define the following sets (below and
through the reminder of the paper we use the convention from Notation
\ref{not:whitney-triple})%
\begin{eqnarray}
D  &:=\left\{  (\theta;x,y)\in E|\;\theta\in\Lambda,\ \left\Vert
x\right\Vert _{u}\leq1,\ \left\Vert y\right\Vert _{s}\leq1\right\}
,\nonumber\\
D^{-}  &:=\left\{  (\theta;x,y)\in E|\;\theta\in\Lambda,\ \left\Vert
x\right\Vert _{u}=1,\ \left\Vert y\right\Vert _{s}\leq1\right\}
,\label{eq:Ddef}\\
D^{+}  &:=\left\{  (\theta;x,y)\in E|\;\theta\in\Lambda,\ \left\Vert
x\right\Vert _{u}\leq1,\ \left\Vert y\right\Vert _{s}=1\right\}  .\nonumber
\end{eqnarray}
For $\theta\in\Lambda$ and $U\subset\Lambda$ we define the following subsets
of $E$:
\[
D_{\theta}:=D\cap E_{\theta},\qquad D_{\theta}^{-}:=D^{-}\cap E_{\theta
},\qquad D_{U}:=\bigcup_{\theta\in U}D_{\theta}.
\]
We will also use the following notation for a closed unit ball in a fiber
$E_{\theta}^{u}$
\[
B_{\theta}^{u}:=\left\{  x\in E_{\theta}^{u}|\;\left\Vert x\right\Vert
_{u}\leq1\right\}  .
\]

\subsection{Existence and continuation of invariant sets}

In this section we formulate our four main theorems. We first introduce a definition that is required to express the assumptions of our first main result. This is a generalization of the notion of `covering relations' which was introduced in \cite{GZ2,Z-conecond,GZ1}. There the covering involves a topological expansion of a set in the direction of hyperbolic expansion, and topological contraction of the set in the direction of hyperbolic contraction. Our approach is an extension of the notion to vector bundles that also have central directions associated with the base manifold. 

\begin{definition}
\label{def:covering-theta} Consider a continuous map $f:D\rightarrow E$ (not
necessarily invertible). For $\theta\in\Lambda$ we say that \emph{$D_{\theta}$
$f$-covers $D$}, denoted $D_{\theta}\overset{f}{\Longrightarrow}D$, if the following
conditions are satisfied:\correction{comment 4}{}

\begin{enumerate}
\item There exists a homotopy $h_{\theta}:[0,1]\times D_{\theta}\rightarrow E$
such that the following hold true
\begin{eqnarray*}
h_{\theta}(0,\cdot)  &  =f(\cdot),\\
h_{\theta}([0,1]\times D_{\theta}^{-})\cap D  &  =\emptyset,\\
h_{\theta}([0,1]\times D_{\theta})\cap D^{+}  &  =\emptyset.
\end{eqnarray*}

\item One of the following is satisfied:
\begin{enumerate}
\item[a.] If $u>0$, then there exists a $\Theta\in\Lambda$ (which can depend
on $\theta$) and a linear map $A_{\theta}:E_{\theta}^{u}\rightarrow E_{\Theta
}^{u}$ such that $A_{\theta}(\partial B_{\theta}^{u})\subset E_{\Theta}%
^{u}\setminus B_{\Theta}^{u}$ ($A_{\theta}$ is \emph{expanding}) and
\[
h_{\theta}(1,\left(  \theta;x,y\right)  )=(\Theta;A_{\theta}x,0)\in E_{\Theta
}.
\]

\item[b.] If $u=0$, then there exists a point $\Theta\in\Lambda$ (which can
depend on $\theta$), such that
\[
h_{\theta}(1,\left(  \theta;y\right)  )=\left(  \Theta;0\right)  \in
E_{\Theta}=E_{\Theta}^{s}.
\]
(In the above line we have omitted $x$ from the notation $\left(
\theta;x,y\right)  $ since $E^{u}$ is of dimension zero.)
\end{enumerate}
\end{enumerate}
\end{definition}

\begin{figure}[ptb]
\begin{center}
\includegraphics[width=12cm]{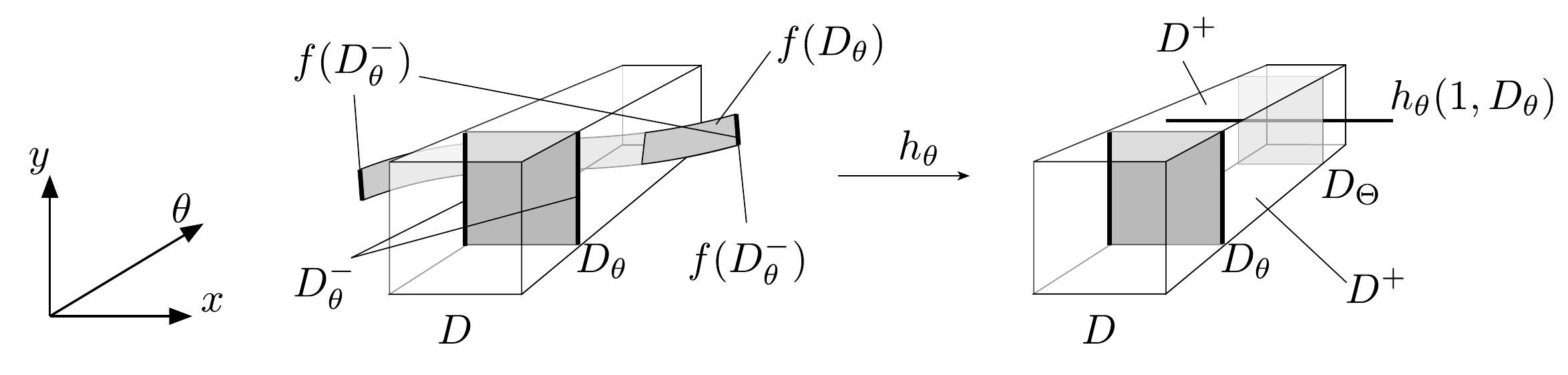}
\end{center}
\caption{Example of a homotopy from Definition \ref{def:covering-theta} of the
covering $D_{\theta}\overset{f}{\Longrightarrow}D$.}%
\label{fig:covering-theta}%
\end{figure}

The intuition behind Definition \ref{def:covering-theta} is depicted in Figure
\ref{fig:covering-theta}. There we consider $\Lambda$ to be a circle, and
$E^{u}$ and $E^{s}$ to be trivial bundles over $\Lambda$ with real one
dimensional fibers; in short, we consider $E=\mathbb{S}^{1}\times
\mathbb{R}\times\mathbb{R}$. On the plot, the front and the back sides (i.e.
$D_{\theta=0}$ and $D_{\theta=2\pi}$) of the set $D$ are identified to be the
same. For the conditions of Definition \ref{def:covering-theta} to hold we
need to have topological expansion on the $x$ coordinates. This means that the
`exit set' $D_{\theta}^{-}$ will be mapped outside of $D$. In addition, we
also need topological contraction on the coordinate $y$. This ensures that
$f(D_{\theta})$ will not intersect with $D^{+}$. We impose quite mild
conditions on the dynamics on $\theta$. It is enough that the correct
topological alignment can be pulled by a homotopy to a fiber $E_{\Theta}$. 
Note that in Definition \ref{def:covering-theta} we do not require the map to carry fibers into fibers, as is the case in the setting of normal hyperbolicity. Such assumption is not needed for any of our results in this paper.

We now formulate our first main result.

\begin{theorem}
\label{th:forward-invariance}If $f:D\rightarrow E$ is a continuous mapping and
$D_{\theta}\overset{f}{\Longrightarrow}D$ holds for every $\theta\in\Lambda$, then
for any $\theta\in\Lambda$ there exists a trajectory starting from $D_{\theta
}$, which remains in $D$ for all forward iterates, i.e., there exists $v\in
D_{\theta}$ such that $f^{m}\left(  v\right)  \in D$ for all $m\in\mathbb{N}$.
\end{theorem}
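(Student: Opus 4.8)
The plan is to construct, for a fixed $\theta_0 \in \Lambda$, a point $v \in D_{\theta_0}$ whose forward orbit stays in $D$, by a nested-intersection / compactness argument whose nonemptiness at each finite stage is guaranteed by the intersection number. For each $m \in \mathbb{N}$ let $S_m := \{ v \in D_{\theta_0} : f^k(v) \in D \text{ for all } 0 \le k \le m \}$. These are nested closed (hence compact) subsets of the compact set $D_{\theta_0}$, so it suffices to show each $S_m$ is nonempty; then $\bigcap_{m} S_m \ne \emptyset$ and any point in it works. The heart of the matter is the inductive step: given that the appropriate ``slices'' survive up to time $m$, produce a surviving slice at time $m+1$.

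The key idea is to track not just survival but a topological linking condition expressed via $I_2$. In the fiber $E_{\theta_0} = E^u_{\theta_0} \oplus E^s_{\theta_0}$, fix $y_0$ with $\|y_0\|_s \le 1$ and consider the ``horizontal disk'' $\overline{X} := \{(\theta_0; x, y_0) : \|x\|_u \le 1\} \subset D_{\theta_0}$, with $X$ its interior and $\partial X = \{\|x\|_u = 1, y = y_0\}$. I would take as the target submanifold a ``vertical fiber'' $Z$ of complementary dimension — concretely, for a fixed $\Theta \in \Lambda$, a fiber-type slice $\{(\Theta; 0, y) : \|y\|_s < 1\}$ of $E_\Theta$, whose closure meets $D^+$ in $\partial Z$. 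One first checks, using condition (1) of Definition \ref{def:covering-theta} applied at $\theta_0$ (the homotopy $h_{\theta_0}$ together with $h_{\theta_0}(0,\cdot) = f$, $h_{\theta_0}([0,1]\times D^-_{\theta_0}) \cap D = \emptyset$, $h_{\theta_0}([0,1]\times D_{\theta_0}) \cap D^+ = \emptyset$) and condition (2) (the terminal map $(\theta_0; x, y_0) \mapsto (\Theta; A_{\theta_0} x, 0)$ with $A_{\theta_0}$ expanding, so $A_{\theta_0}(\partial B^u_{\theta_0}) \subset E^u_\Theta \setminus B^u_\Theta$), that $f|_{\overline{X}}$ is admissible with respect to this $\overline{Z}$ and is admissibly homotopic, through the restriction of $h_{\theta_0}$, to the linear expanding model. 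For that model the preimage of $Z$ is a single transversal point $(\theta_0; 0, y_0)$, so $I_2(f, X, Z) = 1$ by the transversal-map and homotopy properties of the intersection number. In particular $f(X) \cap Z \ne \emptyset$, which already proves the $m=1$ case.

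For the inductive step the plan is to run the same argument one fiber deeper. Assuming $I_2$-type linking holds for the time-$m$ map on a suitable horizontal disk, one restricts attention (via the excision property) to the part of the disk mapped near the relevant fiber, applies the covering hypothesis at the new base point $\Theta$ to produce an admissible homotopy to a new linear expanding model, and concludes that the time-$(m+1)$ image still links the next vertical fiber — hence $S_{m+1} \ne \emptyset$. The main obstacle, and the place needing the most care, is the bookkeeping of admissibility through composition: one must verify that $f \circ (\text{surviving piece})$ still avoids $\overline{Z}$ on the boundary and avoids $\partial Z$ on the image, i.e.\ that the ``horizontal'' boundary $D^-$ really is pushed outside $D$ and the ``vertical'' boundary $D^+$ is never hit, uniformly along the concatenated homotopies — all without fiber-preservation of $f$ and without invertibility. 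Once that is set up cleanly (restricting repeatedly to preimages of smaller fibered balls and using excision to keep the intersection number equal to $1$), the nonemptiness of every $S_m$ follows, and the compactness argument finishes the proof. The $u = 0$ case is handled by the same scheme with $Z$ the full fiber $E_\Theta = E^s_\Theta$ near its center and $I_2$ replaced by the degree-type count coming from alternative (b) of Definition \ref{def:covering-theta}.
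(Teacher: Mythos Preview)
Your high-level strategy---show $S_m\neq\emptyset$ for each $m$ via an intersection-number argument, then intersect---matches the paper's. The $m=1$ step is essentially right (modulo a dimension slip: your $Z=\{(\Theta;0,y):\|y\|_s<1\}$ is $s$-dimensional, while $X$ is $u$-dimensional and $\dim E=c+u+s$; you need $Z=\{(\lambda;0,y):\lambda\in\Lambda,\ \|y\|_s<1\}$, which is what the paper uses).

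The real gap is the inductive step, and it is not merely bookkeeping. Two concrete obstructions:
\begin{itemize}
\item \emph{Domain.} The map $f$ is only defined on $D$, so $f^{m+1}$ is only defined on the closed set $\{v:f^j(v)\in D,\ j\le m\}$. This is not naturally a manifold-with-boundary on which you can run the intersection-number machinery; you cannot simply feed $f^{m+1}|_{\overline X}$ into $I_2$.
\item \emph{Homotopy.} Even after excising to a subset $V$ with $f^m(\overline V)$ in a small trivialised neighbourhood of a fibre $E_\Theta$, to compute $I_2(f\circ f^m,V,Z)$ you must homotope $f\circ f^m$. The covering hypothesis gives you a homotopy $h_\theta$ of $f$ \emph{on a single fibre} $D_\theta$, and there is no continuity of $h_\theta$ in $\theta$ assumed. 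So you cannot simply ``apply the covering at the new base point $\Theta$'' to the image of $f^m$, because that image is spread over many fibres. Contracting the base coordinate of $f^m(V)$ to $\Theta$ first is the right move, but then you must check admissibility of $f\circ(\text{contracted }f^m)$ along the contraction, and the boundary pieces created by the excision ($\pi\circ f^m\in\partial U_\Theta$) need separate treatment. None of this is set up in your sketch.
\end{itemize}

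The paper sidesteps both issues by \emph{not} iterating $f$. It embeds $E$ into $\mathbb R^{2n}$ (so that differences of points make sense), and for fixed $k$ works on the product
\[
\overline X=D^u_{\Theta_0}\times\underbrace{\mathbf D\times\cdots\times\mathbf D}_{k},
\]
defining a single map $F:\overline X\to(\mathbb R^{2n})^k\times E$ whose $i$-th component is $\mathbf f(v_{i-1})-v_i$ and whose last component is $f(v_k)$. A zero of the first $k$ components together with $F_{k+1}\in Z$ encodes a length-$k$ orbit in $D$. Now every application of $f$ is to a point of $D$ (no domain problem), and each coordinate can be homotoped \emph{independently}: first use $h_{\Theta_0}$ on the first coordinate, then excise the next base coordinate to a small contractible $U_{\Theta_1}$, contract it to $\Theta_1$ (using only that $f(D^-_\theta)\cap D=\emptyset$ and $f(D_\theta)\cap D^+=\emptyset$ for every $\theta$, which follows from the covering at $\alpha=0$), then use $h_{\Theta_1}$ on the second coordinate, and so on. At the end the map is an explicit linear model with a single transverse intersection, giving $I_2(F,X,Z)=1$.

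So the missing idea is precisely this product-space/embedding construction, which decouples the steps and lets you invoke the fibrewise covering homotopies one at a time at \emph{chosen} base points $\Theta_0,\Theta_1,\ldots$. Your proposed induction on $m$ can probably be made to work, but doing so would amount to reconstructing the same chain of excisions and contractions inside the composition $f^{m}$, and is not the routine verification you describe.
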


The proof is given in section \ref{proof:forward-invariance}.

Theorem \ref{th:forward-invariance} establishes the existence of points that
remain in $D$ for all iterates of a map when going forwards in time. Now we
turn to what happens also backwards in time. For this we make an additional
assumption that $\Lambda$ is a connected manifold.

\begin{definition}
\label{def:covering} Consider a continuous map $f:D\rightarrow E$ (not
necessarily invertible). We say that $D$ f\emph{-covers} $D$, denoted
$D\overset{f}{\Longrightarrow}D$, if the following conditions are satisfied:

\begin{enumerate}
\item There exists a homotopy $h:[0,1]\times D\rightarrow E$ such that the
following hold true
\begin{eqnarray}
h(0,\cdot)  &  =f(\cdot),\nonumber\\
h([0,1]\times D^{-})\cap D  &  =\emptyset,\label{eq:topological-expansion}\\
h([0,1]\times D)\cap D^{+}  &  =\emptyset. \label{eq:topological-contraction}%
\end{eqnarray}

\item There exists a continuous map $\eta:\Lambda\rightarrow\Lambda$ for which%
\begin{equation}
\deg_{2}\left(  \eta\right)  \neq0, \label{eq:deg-covering}%
\end{equation}
moreover,\medskip

\begin{enumerate}
\item[a.] If $u>0$, then for any $\theta\in\Lambda$ there exists a linear map
$A_{\theta}:E_{\theta}^{u}\rightarrow E_{\eta\left(  \theta\right)  }^{u}$
such that $A_{\theta}(\partial B_{\theta}^{u})\subset E_{\eta\left(
\theta\right)  }^{u}\setminus B_{\eta\left(  \theta\right)  }^{u}$
($A_{\theta}$ is \emph{expanding}) and
\[
h(1,\left(  \theta;x,y\right)  )=(\eta\left(  \theta\right)  ;A_{\theta
}x,0)\in E_{\eta\left(  \theta\right)  }.
\]

\item[b.] If $u=0$, then
\[
h_{\theta}(1,\left(  \theta;y\right)  )=\left(  \eta\left(  \theta\right)
;0\right)  \in E_{\eta\left(  \theta\right)  }=E_{\eta\left(  \theta\right)
}^{s}.
\]
(In the above line we have omitted $x$ from the notation $\left(
\theta;x,y\right)  $ since $E^{u}$ is of dimension zero.)
\end{enumerate}
\end{enumerate}
\end{definition}

\begin{figure}[ptb]
\begin{center}
\includegraphics[width=13.5cm]{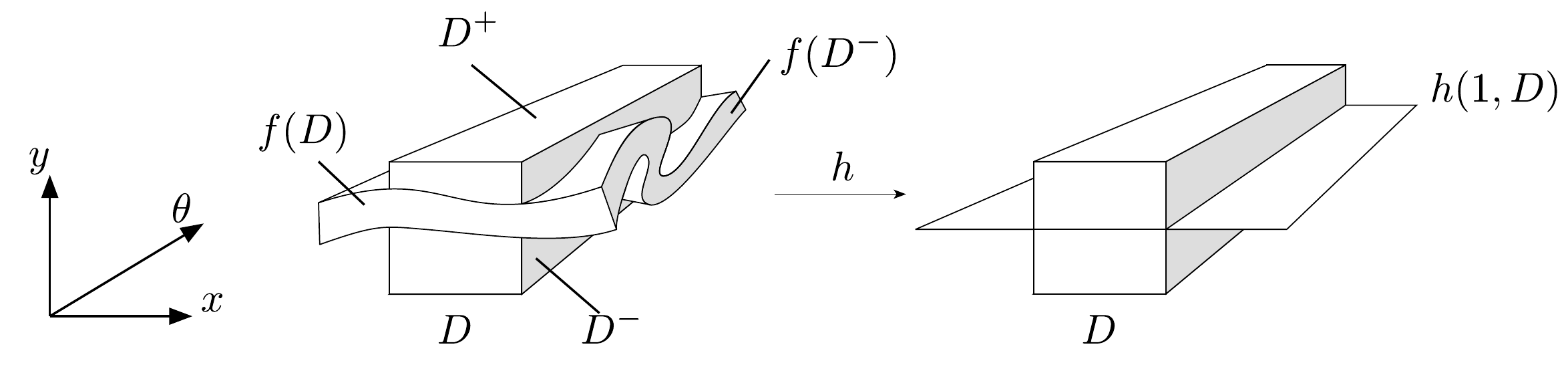}
\end{center}
\caption{Example of a homotopy from Definition \ref{def:covering} of the
covering $D\overset{f}{\Longrightarrow}D$.}%
\label{fig:covering}%
\end{figure}The intuition behind Definition \ref{def:covering} is similar to
what we discussed for Definition \ref{def:covering-theta}. We need to have
topological expansion in $x$ and topological contraction in $y$. In addition,
we assume that the dynamics on $\theta$ is homotopic to some map with nonzero
degree. Such property is visualized in Figure \ref{fig:covering}.

We make a couple of remarks before we formulate our second main result.

\begin{remark}
Condition $D\overset{f}{\Longrightarrow}D$ implies that $D_{\theta}%
\overset{f}{\Longrightarrow}D$ holds for any $\theta\in\Lambda$. (This follows by
taking $h_{\theta}=h|_{\left[  0,1\right]  \times D_{\theta}}$ and
$\Theta=\eta\left(  \theta\right)  $.) The implication in the other direction
is not always true. For instance, when $\pi\circ f(D)=\Theta$, meaning that
$f$ maps to a single fiber $E_{\Theta}$, then we can have $D_{\theta
}\overset{f}{\Longrightarrow}D$ for any $\theta\in\Lambda$, but $\eta$ satisfying
(\ref{eq:deg-covering}) will not exist.
\end{remark}

\begin{remark}
Condition (\ref{eq:deg-covering}) is quite natural for stroboscopic
(time-shift) maps of flows. In such setting, if the time shift along the flow
used to define the map is small enough, then it is possible to find a homotopy
to $\eta$ chosen to be the identity on $\Lambda$. Condition (\ref{eq:deg-covering}) is also automatically fulfilled in the setting of normal hyperbolicity; we show this in Lemma \ref{lem:nhim-covers}.
\end{remark}

\begin{remark}
\label{rem:generalization}
In (\ref{eq:deg-covering}) we use the degree modulo two of a map. This is
because we do not wish to impose any orientablity assumptions. If the considered manifolds
$\Lambda$ and $E$ are oriantable, one could use the Brouwer degree instead.
Condition (\ref{eq:deg-covering}) can also be replaced by requiring that the
degree computed at every point in $\Lambda$ is nonzero (Brouwer degree
computed at every point in $\Lambda$ is nonzero, if $\Lambda$ and $E$ are
oriantable); for which we do not need $\Lambda$ to be connected. (These
generalizations are highlighted in the footnote on page \pageref{footnote}
during the proof of Theorem \ref{th:invariance}.)
\end{remark}

We now formulate our second main result:

\begin{theorem}
\label{th:invariance}If $f:D\rightarrow E$ is a continuous mapping and
$D\overset{f}{\Longrightarrow}D$, then for every $\theta\in\Lambda$ there exists an
orbit in $D$ passing through $D_{\theta}$, i.e., there \correction{comment 5}{exists a sequence }$\left\{  v_{i}\right\}  _{i\in\mathbb{Z}}\subset D$, such that $v_{0}\in
D_{\theta}$ and $f\left(  v_{i}\right)  =v_{i+1}$, for all $i\in\mathbb{Z}$.
\end{theorem}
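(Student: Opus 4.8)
The plan is to upgrade Theorem \ref{th:forward-invariance} from forward invariance to full (two-sided) invariance by using the extra degree hypothesis \eqref{eq:deg-covering} to control the backward dynamics along the base. First I would fix $\theta_0 \in \Lambda$ and, for each $n \in \mathbb{N}$, try to produce a finite orbit segment $v_{-n}, v_{-n+1}, \dots, v_0$ with $v_i \in D$, $f(v_i) = v_{i+1}$, and $v_0 \in D_{\theta_0}$; then a diagonal/compactness argument (using that $D$ is compact, since $\Lambda$ is compact and the fibers are unit balls) extracts a genuine bi-infinite orbit through $D_{\theta_0}$ in $D$. The forward direction is already handled: given any point on such a segment, Theorem \ref{th:forward-invariance} provides the continuation to $+\infty$. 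So the real content is the \emph{backward} existence of such segments hitting the prescribed fiber $D_{\theta_0}$.

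The mechanism for going backward is an intersection-number argument in the spirit of covering relations. The condition $D \overset{f}{\Longrightarrow} D$ says $f$ is homotopic (through a homotopy respecting the exit/entry sets) to the model map $(\theta; x, y) \mapsto (\eta(\theta); A_\theta x, 0)$, where $\eta$ has nonzero mod-2 degree and each $A_\theta$ is expanding on $E^u_\theta$. Iterating the covering relation $n$ times, one shows $D \overset{f^n}{\Longrightarrow} D$ with base map $\eta^n$, still of nonzero degree since $\deg_2$ is multiplicative. The key step is then: for a fixed target fiber $E_{\theta_0}$, consider the "center-stable" slice $\{(\theta; 0, y)\} \subset D$ over all of $\Lambda$ — a submanifold of $D$ of dimension $c + s$ — and the fiber disc $D_{\theta_0}$ of dimension $u + s$; these have complementary dimension in $E$ if $\dim E = c + u + s$. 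One wants to show $f^n$ maps (a suitable homotopy of) the center-stable slice so as to have nonzero intersection number with $D_{\theta_0}$, forcing a point $w \in D$ with $\pi(w) = \theta_0$ and $w = f^n(v_{-n})$ for some $v_{-n}$ in the center-stable slice, with all intermediate iterates in $D$. Here the topological-expansion condition \eqref{eq:topological-expansion} guarantees admissibility (images of $D^-$ stay out of $D$, so out of the target), the topological-contraction condition \eqref{eq:topological-contraction} controls the $E^+$ boundary, and the nonvanishing of $\deg_2(\eta^n)$ is exactly what makes the model map hit every fiber the right number of times. The homotopy property of $I_2$ then transfers this from the model map to $f^n$.

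The main obstacle — and where I expect most of the work to go — is setting up the intersection number correctly in this bundle setting: identifying the right pair of (sub)manifolds with complementary dimension, checking that the relevant maps and homotopies are \emph{admissible} in the sense of Definitions \ref{def:admissible} and \ref{def:admissible-H} (in particular that nothing from the $D^-$ or $D^+$ boundary pieces interferes during the homotopy to the model map), and verifying that the model map itself realizes intersection number $1$ with $D_{\theta_0}$. This last point reduces, via the fiberwise-expanding maps $A_\theta$ and the mod-2 degree of $\eta^n$, to a counting statement: a point over $\theta_0$ in the image corresponds to a preimage $\theta' \in (\eta^n)^{-1}(\theta_0)$ together with the (unique, by expansion and dimension count) point in $B^u_{\theta'}$ mapped to $0$ in the $E^u$-coordinate — so the count is $\#(\eta^n)^{-1}(\theta_0) \bmod 2 = \deg_2(\eta^n) \neq 0$. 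Non-orientability and non-invertibility cause no trouble because we work only with $I_2$ and never invert $f$. Once the $n$-step segments are produced, I would close the argument by the standard compactness/diagonal extraction, noting that the bi-infinite sequence obtained is contained in the compact set $D$ and satisfies $f(v_i) = v_{i+1}$ for all $i \in \mathbb{Z}$ with $v_0 \in D_{\theta_0}$.
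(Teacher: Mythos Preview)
There are two genuine gaps in your plan.

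\textbf{The forward/backward decoupling fails.} Theorem \ref{th:forward-invariance} does \emph{not} say that a given $v_0 \in D_{\theta_0}$ has forward orbit in $D$; it says only that \emph{some} point of $D_{\theta_0}$ does. So even if you produce, for each $n$, a backward segment $v_{-n},\dots,v_0$ in $D$ with $v_0\in D_{\theta_0}$, and then pass to a limit $v_0^*$ by compactness, nothing guarantees $f^i(v_0^*)\in D$ for $i>0$. You cannot glue a backward orbit to the forward orbit coming from Theorem \ref{th:forward-invariance}, because that theorem will in general hand you a different point of $D_{\theta_0}$. The paper avoids this by finding, for each $k$, a full two-sided segment $v_{-k},\dots,v_k$ \emph{in a single intersection-number computation}: it sets up the product space
\[
\overline{X}=D^u\times\underbrace{\mathbf{D}\times\cdots\times\mathbf{D}}_{k}\times\mathbf{D}_{\Theta_0}\times\underbrace{\mathbf{D}\times\cdots\times\mathbf{D}}_{k}
\]
and a map $F:\overline{X}\to Y$ whose zeros encode exactly such segments (the factor $\mathbf{D}_{\Theta_0}$ pins $v_0$ to the prescribed fiber). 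The homotopy $h$ from $D\overset{f}{\Longrightarrow}D$ is applied simultaneously to all the backward indices, and then the forward indices are handled as in the proof of Theorem \ref{th:forward-invariance}. Only then does the compactness argument go through.

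\textbf{The dimension count is wrong.} Your center-stable slice has dimension $c+s$ and $D_{\theta_0}$ has dimension $u+s$; these sum to $c+u+2s$, which equals $\dim E=c+u+s$ only when $s=0$. So the intersection number $I_2$ is not even defined in your setup. In the paper the complementary-dimension bookkeeping is carried by the product construction: the \emph{initial} slice (at index $-k-1$) is $D^u$ of dimension $c+u$, and the target $Z$ (at index $k+1$) is the center-stable set $\{(\theta;0,y):\|y\|_s<1\}$ of dimension $c+s$; the intermediate factors $\mathbf{D}$ are full-dimensional and cancel against the $\{0^{2n}\}$ factors in $Z$. A related technical point you gloss over: working directly with $f^n$ is awkward because $f:D\to E$, so $f^n$ is only defined where all intermediate iterates remain in $D$, and admissibility on $\partial X$ becomes unclear. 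The product encoding, with each intermediate point a free variable in the embedded $\mathbf{D}\subset\mathbb{R}^{2n}$ and each step written as a difference $\mathbf{f}(v_{i-1})-v_i=0$, sidesteps this entirely.

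Your instinct about where $\deg_2(\eta)\ne 0$ enters is correct: after the homotopies, the model map on the backward block reduces the count to $\#\big(\eta^{k+1}\big)^{-1}(\Theta_0)\bmod 2$, which is odd. But this has to be embedded in the full two-sided product computation, not run separately from the forward argument.
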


The proof is given in section \ref{proof:invariance}.
\begin{remark}\label{rem:sequences} In Theorems \ref{th:forward-invariance} and \ref{th:invariance} we obtain sets of points that remain in $D$ for iterates of the single map $f$. We can in fact just as well compose sequences of maps. 

To be precise, let $f_i : E\to E$ be a sequence of continuous maps and consider a dynamical system
\begin{equation}
v_{i+1}=f_i(v_i). \label{eq:nanaut-dynsys}
\end{equation}
Using mirror arguments to those used for the proof of Theorem \ref{th:forward-invariance} we can obtain forward trajectories of (\ref{eq:nanaut-dynsys})
in $D$ as long as $D_{\theta}\overset{f_i}{\Longrightarrow}D$ for all $i\in \mathbb{N}$ and all $\theta \in \Lambda$.

Similarly, if $D \overset{f_i}{\Longrightarrow}D$ for all $i\in \mathbb{Z}$, then using mirror arguments to those used for the proof of Theorem \ref{th:invariance} we can obtain full trajectories of (\ref{eq:nanaut-dynsys}) in $D$.  

The minor modifications needed for these results are highlighted in the footnotes during the course of the proofs of Theorems \ref{th:forward-invariance} and \ref{th:invariance} on pages \pageref{foot:th1} and \pageref{foot:th2}, respectively.
\end{remark}

We also have the following continuation results for continuous families of maps, which satisfy the
covering condition.

\begin{theorem}
\label{th:forward-continuation}Assume that we have a family of maps
$f_{\alpha}:D\rightarrow E$, which depends continuously on $\alpha\in\left[
0,1\right]  $. If for all $\alpha\in\left[  0,1\right]  $ and all $\theta
\in\Lambda,$ $D_{\theta}\overset{f_{\alpha}}{\Longrightarrow}D$, then for any
$\theta\in\Lambda$ there exists a compact connected component $C$ of $\left[
0,1\right]  \times D_{\theta}$ which meets both $\left\{  0\right\}  \times
D_{\theta}$ and $\left\{  1\right\}  \times D_{\theta},$ such that for any
$\left(  \alpha,v\right)  \in C$%
\[
f_{\alpha}^{n}\left(  v\right)  \in D\text{ for all }n\in\mathbb{N}.
\]

\end{theorem}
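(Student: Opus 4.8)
\textbf{Proof proposal for Theorem \ref{th:forward-continuation}.}

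The plan is to build a ``forward-invariance in $D$'' set in the extended phase space $[0,1]\times D_\theta$ and extract from it a connected component joining the two ends, using a standard compactness-connectedness lemma (of the type: if $X_n$ is a nested sequence of compact sets in a compact metric space, each of which contains a connected subset meeting two disjoint closed sets $K_0,K_1$, then $\bigcap_n X_n$ contains a connected component meeting both $K_0$ and $K_1$). First I would define, for each $n\in\mathbb{N}$, the set
\[
W_n := \bigl\{(\alpha,v)\in[0,1]\times D_\theta \;:\; f_\alpha^{\,j}(v)\in D \text{ for all } j=0,1,\dots,n\bigr\}.
\]
Each $W_n$ is closed, hence compact, since $(\alpha,v)\mapsto f_\alpha^{\,j}(v)$ is continuous (jointly in $\alpha$ and $v$, using continuous dependence of the family on $\alpha$) and $D$ is closed; and $W_{n+1}\subseteq W_n$. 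The desired set is $C \subseteq \bigcap_n W_n$, a connected component; every $(\alpha,v)$ in $\bigcap_n W_n$ satisfies $f_\alpha^n(v)\in D$ for all $n$, which is the conclusion. So the whole theorem reduces to showing that each $W_n$ contains a connected set meeting both $\{0\}\times D_\theta$ and $\{1\}\times D_\theta$.

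The core step is therefore a finite-time statement: for every $n$, the slice-wise forward-invariant-for-$n$-steps set $W_n$ spans the parameter interval in a connected way. Here I would run the argument used to prove Theorem \ref{th:forward-invariance} (which produces, for a single covering map, a point in $D_\theta$ whose forward orbit stays in $D$), but keeping track of the parameter $\alpha$ and of the intersection-number machinery of Section \ref{sec:intersection-number}. The key fibered picture is: the set of $v\in D_\theta$ with $f_\alpha^j(v)\in D$ for $j\le n$ should be detected as a nonempty intersection of an admissible map (built by composing $f_\alpha$ with itself and pulling back along the homotopies $h_\theta$ supplied by $D_\theta\overset{f_\alpha}{\Longrightarrow}D$) with the ``center'' submanifold $Z$ sitting inside a fiber; the admissibility conditions $h_\theta([0,1]\times D_\theta^-)\cap D=\emptyset$ and $h_\theta([0,1]\times D_\theta)\cap D^+=\emptyset$ are exactly what make the relevant maps admissible and keep the mod $2$ intersection number equal to $1$ (coming from the expanding linear map $A_\theta$, whose restriction to $\partial B_\theta^u$ avoids the unit ball, forcing a single transversal intersection). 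Now introduce $\alpha$ as an extra coordinate: over $[0,1]\times D_\theta$ one gets a map into a bundle of one higher base dimension, for which the same intersection number is $1$ on each $\alpha$-slice; by the homotopy and excision properties this intersection number is constant in $\alpha$, so the preimage $S_n\subset[0,1]\times D_\theta$ of the center manifold is nonempty over every $\alpha$. A Whyburn-type argument (or the nested-compacta lemma applied to the closures of the connected components of $S_n$) then yields inside $S_n\subseteq W_n$ a connected component meeting $\{0\}\times D_\theta$ and $\{1\}\times D_\theta$: otherwise $S_n$ would split into two disjoint compact pieces, one containing the $\alpha=0$ part and one the $\alpha=1$ part, contradicting that $S_n$ surjects onto $[0,1]$ in a way compatible with the slicewise nonemptiness — more carefully, one uses that if no such component exists then by the standard separation lemma there is a clopen partition of $S_n$ separating the two ends, and this clopen set has a boundary-free ``cross-section'' over some $\alpha^\ast$, which contradicts nonemptiness of the slice $S_n\cap(\{\alpha^\ast\}\times D_\theta)$.

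The main obstacle I anticipate is the bookkeeping in this finite-time fibered intersection-number computation: the homotopies $h_\theta$ in Definition \ref{def:covering-theta} depend on $\theta$ (and now also implicitly on $\alpha$), they need not carry fibers to fibers, and $E^u$ may be nonorientable, so one cannot simply say ``degree is constant.'' One must assemble, over all of $D_\theta$ and all $\alpha$, a single admissible map whose domain is (a subset of) $[0,1]\times D_\theta$ and whose target is an appropriate fiber (or a trivialized neighborhood), check that the admissibility conditions of Definition \ref{def:admissible} and \ref{def:admissible-H} hold uniformly (this is where $h_\theta([0,1]\times D_\theta^-)\cap D=\emptyset$ and $h_\theta([0,1]\times D_\theta)\cap D^+=\emptyset$ do the work), and verify the base case $n=0$ where the intersection number equals $1$ because $A_\theta$ is expanding. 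Once the finite-$n$ statement is in hand, the passage to $n=\infty$ is the routine nested-compacta argument and the extraction of $C$ is immediate; I would relegate the latter two to a short lemma (shared with the proof of Theorem \ref{th:invariance}'s continuation counterpart, Theorem \ref{th:continuation}).
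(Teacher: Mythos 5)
Your overall architecture is workable and close in spirit to the paper's: reduce to compact finite-time sets, detect orbits with the mod $2$ intersection number built from the coverings, and pass to the limit by a nested-compacta argument (the paper instead applies Whyburn's Lemma \ref{lem:Whyburn} directly to the infinite-time set $K$ and brings in the finite-time sets $L_{\alpha}^{k}$ only inside the contradiction, but your reduction would serve equally well, and the limit step you describe is correct). The genuine gap is in the step where you extract from $S_n$ a connected component joining $\left\{0\right\}\times D_{\theta}$ to $\left\{1\right\}\times D_{\theta}$. You argue that if no such component existed, the resulting splitting of $S_n$ into two disjoint compacta would contradict the slicewise nonemptiness of $S_n$ over every $\alpha$. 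It would not: the set $S=\left(\left[0,\tfrac12\right]\times\left\{0\right\}\right)\cup\left(\left[\tfrac12,1\right]\times\left\{1\right\}\right)\subset\left[0,1\right]\times\left[0,1\right]$ is compact, has nonempty slices over every $\alpha$, meets both ends, and splits into two disjoint compacta each meeting only one end. Knowing that the intersection number is $1$ on every \emph{full} slice $\left\{\alpha\right\}\times D_{\theta}$ (hence slicewise nonemptiness) is strictly weaker than what you need, so the "more careful" separation argument you sketch cannot close.

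The missing idea is the Leray--Schauder continuation device. Given the hypothetical splitting $K=\widehat{K_{0}}\sqcup\widehat{K_{1}}$ from Whyburn's lemma, one localizes the intersection number to an $\varepsilon$-neighborhood $U$ of $\widehat{K_{0}}$ \emph{alone}, whose slices $U_{\alpha}$ vary with $\alpha$. This requires a generalized homotopy property valid for a domain $V\subset\left[0,1\right]\times\overline{X}$ that is not a product (the paper's Lemma \ref{lem:homotopy-generalised}); admissibility on $\partial V$ is arranged by choosing $k$ large enough that $L_{\alpha}^{k}\cap\partial U_{\alpha}=\emptyset$ for all $\alpha$. One then gets $I_{2}\left(H\left(0,\cdot\right),V_{0},Z\right)=I_{2}\left(H\left(1,\cdot\right),V_{1},Z\right)$, where the left side equals $1$ by excision from the computation in the proof of Theorem \ref{th:forward-invariance} (since $L_{0}^{k}\subset U_{0}$), while the right side equals $0$ because $U_{1}$ is disjoint from $L_{1}^{k}$. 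That jump of the \emph{localized} invariant, not slicewise nonemptiness of the global solution set, is the contradiction. Your proposal never localizes the intersection number to a neighborhood of one piece of the splitting, and without that the argument does not go through.
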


The proof is given in section \ref{proof:forward-continuation}.

\begin{theorem}
\label{th:continuation}Assume that we have a family of maps $f_{\alpha
}:D\rightarrow E$, which depends continuously on $\alpha\in\left[  0,1\right]
$. If for all $\alpha\in\left[  0,1\right]  $, $D\overset{f_{\alpha}}%
{\Longrightarrow}D$, then for any $\theta\in\Lambda$ there exists a compact connected
component $C$ of $\left[  0,1\right]  \times D_{\theta}$ which meets both
$\left\{  0\right\}  \times D_{\theta}$ and $\left\{  1\right\}  \times
D_{\theta},$ such that for any $\left(  \alpha,v\right)  \in C$ there exists
an orbit of $f_{\alpha}$ in $D$ passing through $v$, i.e., there exists as
sequence $\left\{  v_{i}\right\}  _{i\in\mathbb{Z}}\subset D$, such that
$v_{0}=v\in D_{\theta}$ and $f_{\alpha}\left(  v_{i}\right)  =v_{i+1}$, for
all $i\in\mathbb{Z}$.
\end{theorem}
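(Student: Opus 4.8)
The plan is to combine the single-map invariance result of Theorem \ref{th:invariance} with a compactness/connectedness argument of the type already used for Theorem \ref{th:forward-continuation}, so I would first make sure the relevant pieces are in place. The underlying idea is to extend the phase space to include the parameter: set $\widehat{D}=[0,1]\times D$ and define $\widehat{f}:\widehat{D}\to[0,1]\times E$ by $\widehat{f}(\alpha,v)=(\alpha,f_\alpha(v))$. The hypothesis $D\overset{f_\alpha}{\Longrightarrow}D$ for every $\alpha$, together with continuity in $\alpha$, should give a corresponding covering-type statement for $\widehat f$ over the base $[0,1]\times\Lambda$ (the homotopies $h_\alpha$ and the expanding maps $A_\theta^\alpha$ depend continuously on $\alpha$, so they can be bundled together). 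Thus for each fixed $\alpha$, Theorem \ref{th:invariance} already yields, for every $\theta$, a bi-infinite orbit of $f_\alpha$ in $D$ through $D_\theta$; what remains is to organise these orbits into a connected set as $\alpha$ varies.

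The key step is therefore the topological bookkeeping. Fix $\theta\in\Lambda$ and consider the set
\[
\mathcal{I}:=\left\{(\alpha,v)\in[0,1]\times D_\theta\;\middle|\;\exists\,\{v_i\}_{i\in\mathbb{Z}}\subset D,\ v_0=v,\ f_\alpha(v_i)=v_{i+1}\ \forall i\in\mathbb{Z}\right\}.
\]
First I would show $\mathcal I$ is closed in $[0,1]\times D_\theta$, hence compact: take a convergent sequence $(\alpha^k,v^k)\to(\alpha,v)$ in $\mathcal I$, pass to the associated orbits $\{v_i^k\}_i\subset D$ which live in the compact set $D$, and use a diagonal argument together with continuity of $(\alpha,v)\mapsto f_\alpha(v)$ to extract a limiting orbit of $f_\alpha$ through $v$. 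Next I would show that the slice $\mathcal{I}\cap(\{\alpha\}\times D_\theta)$ is nonempty for every $\alpha\in[0,1]$ — this is exactly Theorem \ref{th:invariance} applied to $f_\alpha$. Then the standard argument (compare the proof of Theorem \ref{th:forward-continuation}) applies: if no connected component of $\mathcal I$ met both $\{0\}\times D_\theta$ and $\{1\}\times D_\theta$, then by compactness $\mathcal I$ would split into two disjoint compact (hence positively separated) pieces $\mathcal{I}_0,\mathcal{I}_1$, with $\mathcal{I}_0\supset\mathcal I\cap(\{0\}\times D_\theta)$ and $\mathcal I_1\supset\mathcal I\cap(\{1\}\times D_\theta)$ and $\mathcal I_0\cap(\{1\}\times D_\theta)=\emptyset$. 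One then sets $\alpha^\ast=\sup\{\alpha:\mathcal I_0\cap(\{\alpha\}\times D_\theta)\neq\emptyset\}$ and derives a contradiction: the slice at $\alpha^\ast$ must be nonempty (Theorem \ref{th:invariance}) and meets $\mathcal I_0$ by closedness, yet for $\alpha$ slightly above $\alpha^\ast$ the slice is nonempty but disjoint from $\mathcal I_0$, so it lies in $\mathcal I_1$ — contradicting that $\mathcal I_1$ is positively separated from $\mathcal I_0$ while slices vary "continuously" near $\alpha^\ast$ (made precise via the closedness of $\mathcal I$ and a limit of points in slices near $\alpha^\ast$). This gives the desired component $C$.

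I would then finish by checking the conclusion's content: every $(\alpha,v)\in C$ lies in $\mathcal I$ by construction, so there is a bi-infinite orbit $\{v_i\}_{i\in\mathbb Z}\subset D$ of $f_\alpha$ with $v_0=v\in D_\theta$; and $C$ being a connected component of the compact set $\mathcal I$ meeting both ends makes it a compact connected subset of $[0,1]\times D_\theta$ meeting $\{0\}\times D_\theta$ and $\{1\}\times D_\theta$, as claimed. I expect the main obstacle to be the closedness/compactness of $\mathcal I$ — specifically the diagonal extraction of a limiting bi-infinite orbit and verifying it stays in the closed set $D$ and genuinely solves $f_\alpha(v_i)=v_{i+1}$ for all $i$ — since one must be careful that the limit orbit does not "escape" through $D^-$ or $D^+$; here the fact that $D$ is closed and $f_\alpha$ is continuous on $D$, plus uniformity coming from continuity in $\alpha$ on the compact parameter interval, is what saves the argument. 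The connectedness step itself is then a routine application of the same separation lemma already invoked in Section \ref{proof:forward-continuation}, so I would cross-reference it rather than repeat it in full.
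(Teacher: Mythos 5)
Your set $\mathcal{I}$ is the paper's $K$, your compactness argument for it (diagonal extraction of a limiting bi-infinite orbit in the closed set $D$, using continuity of $(\alpha,v)\mapsto f_\alpha(v)$) is sound, and invoking Theorem \ref{th:invariance} for the nonemptiness of every slice is correct. The gap is in the step where you rule out the splitting $\mathcal{I}=\mathcal{I}_0\cup\mathcal{I}_1$. Nonemptiness of every slice $\mathcal{I}\cap(\{\alpha\}\times D_\theta)$ does \emph{not} contradict such a splitting, and your $\alpha^\ast=\sup\{\cdots\}$ argument produces no contradiction. Concretely, take $D_\theta=[0,1]$ and
\[
\mathcal{I}=\left([0,1/2]\times\{0\}\right)\cup\left([1/2,1]\times\{1\}\right).
\]
Every slice is nonempty and $\mathcal{I}$ is compact, yet it splits into two positively separated compact pieces containing the two ends, and no component meets both. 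At your $\alpha^\ast=1/2$ the points of the slices just above $\alpha^\ast$ lie in $\mathcal{I}_1$ and converge to a point of $\mathcal{I}_1$, so closedness of $\mathcal{I}$ yields nothing against the separation of $\mathcal{I}_0$ from $\mathcal{I}_1$. In short, ``each slice is nonempty'' is strictly weaker than ``a component joins the two ends'', and the entire difficulty of the theorem lies in bridging that difference.

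The paper bridges it with a Leray--Schauder continuation argument that your sketch omits. After Whyburn's lemma produces the hypothetical splitting $K=\widehat{K_0}\cup\widehat{K_1}$, one takes an $\varepsilon$-neighborhood $U$ of $\widehat{K_0}$ (embedded as $\mathbf{U}$), forms the finite-orbit defect map $H(\alpha,\cdot)$ on $\overline{V}=D^{u}\times\mathbf{D}\times\cdots\times\overline{\mathbf{U}}\times\cdots\times\mathbf{D}$, and applies the generalized homotopy property of Lemma \ref{lem:homotopy-generalised} --- admissible because for $k$ large no length-$k$ orbit can sit on $\partial U_\alpha$ --- to obtain $I_{2}(H(0,\cdot),V_0,Z)=I_{2}(H(1,\cdot),V_1,Z)$. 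The left-hand side equals $1$ by excision from the intersection-number computation in the proof of Theorem \ref{th:invariance}; the right-hand side equals $0$ because $U_1$ is disjoint from $L_1^{k}$. That numerical contradiction, not a slice-by-slice nonemptiness count, is what kills the second alternative of Whyburn's lemma. Your proposal needs this (or an equivalent degree-continuation device) to become a proof; the same caveat applies to your reading of Section \ref{proof:forward-continuation}, whose ``separation'' step likewise rests on the intersection-number continuation rather than on nonempty slices.
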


The proof is given in section \ref{proof:continuation}.

\begin{figure}[ptb]
\begin{center}
\includegraphics[height=2.5cm]{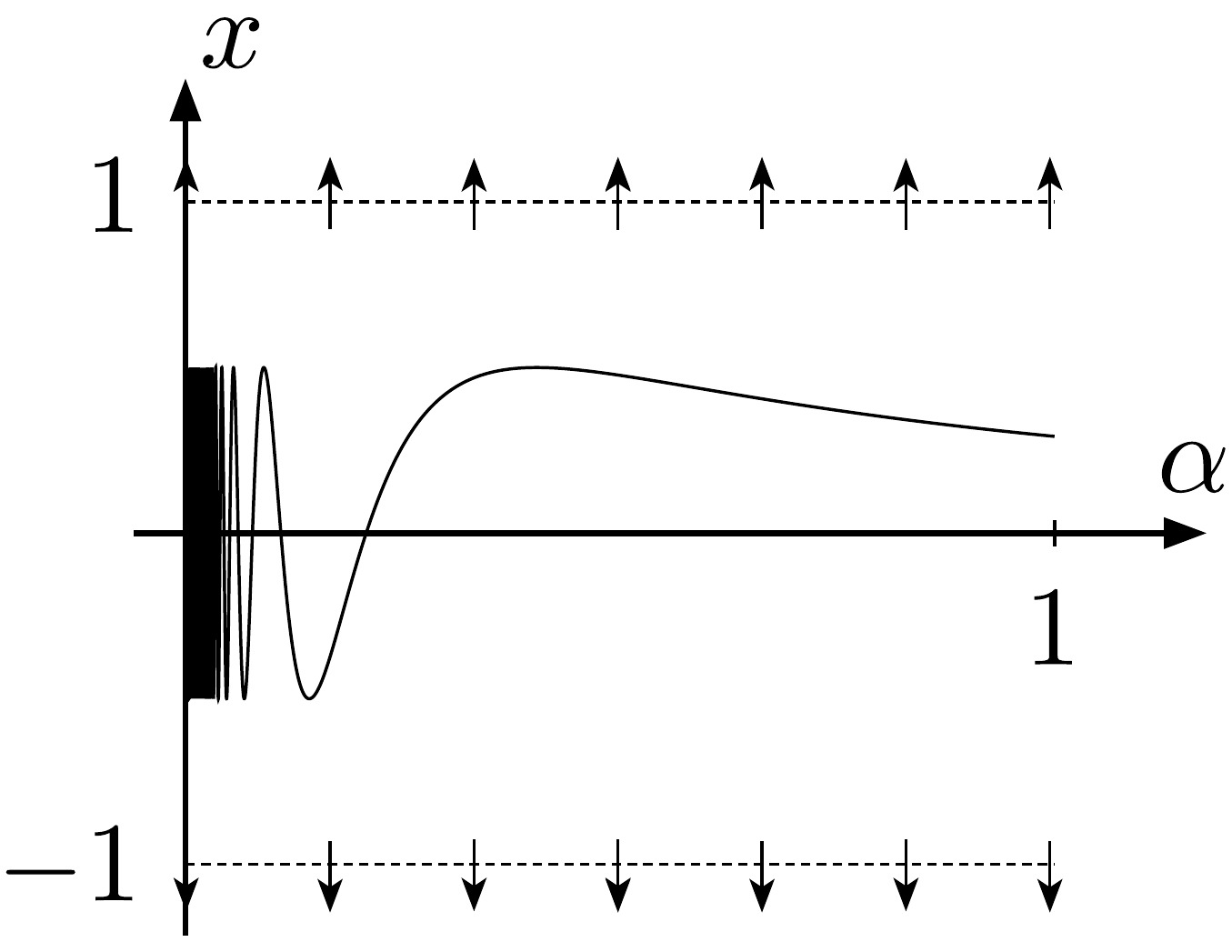}
\end{center}
\caption{The connected component $C$ from Remark \ref{rem:topol-sin}.}%
\label{fig:topol-sin}%
\end{figure}

\begin{remark}
\label{rem:topol-sin}We can not assert that $C$ from Theorems
\ref{th:forward-continuation}, \ref{th:continuation} is path connected. We can
see this if we take $f_{\alpha}:\mathbb{R\rightarrow R}$ with $f_{\alpha
}(x)=x+\alpha\left(  x-\frac{1}{2}\sin\frac{1}{\alpha}\right)  +g\left(
x\right)  ,$ where $g:\mathbb{R\rightarrow R}$ is continuous with $g|_{\left[
-1/2,1/2\right]  }=0$, $g|_{\left\{  x>1/2\right\}  }>0$ and $g|_{\left\{
x<-1/2\right\}  }<0$ (see Figure \ref{fig:topol-sin}). For $\alpha\in(0,1]$ we
have a family of hyperbolic fixed points $x_{\alpha}:=\frac{1}{2}\sin\frac
{1}{\alpha}$ of $f_{\alpha}$. Assumptions of Theorems
\ref{th:forward-continuation}, \ref{th:continuation} hold, since for all
$\alpha\in\left[  0,1\right]  $ the maps $f_{\alpha}$ stretch the interval
$D=\left[  -1,1\right]  $. We can not find a path connecting the set of fixed
points $\left[  -\frac{1}{2},\frac{1}{2}\right]  $ of $f_{0}$ with $x_{\alpha
}$. Nevertheless, we see that $C=\left\{  0\right\}  \times\left[  -\frac
{1}{2},\frac{1}{2}\right]  \cup\left\{  \left(  \alpha,x_{\alpha}\right)
|\,\alpha\in(0,1]\right\}  $ is connected (but not path connected).
\end{remark}

\begin{remark}
In the definition of the set $D$ we fixed the norms to be less than or equal
to one. This does not make it less general, since our results will hold in any
setting that is homeomorphic to the above.
\end{remark}

\subsection{Application in the context of normal hyperbolicity.\label{sec:norm-hyp}}

Below we give a corollary to bridge our results with the theory of NHIMs.
Before we proceed, we briefly recall the definition. 

\begin{definition}
Let $M$ be a smooth manifold and $f:M\rightarrow M$ a diffeomorphism. A
manifold $\Lambda\subset M$, invariant under $f$, i.e., $f\left(  \Lambda
\right)  =\Lambda$, is said to be \emph{normally hyperbolic} if there exist
a constant $C>0$, rates $0<\lambda<\mu^{-1}<1$ and a splitting {
\begin{equation}
T_{\Lambda}M=T\Lambda\oplus E^{u}\oplus E^{s},\label{eq:splitting}%
\end{equation}
which is }invariant under the action of the differential $df$ and {such that
for }$\theta\in\Lambda$%
\begin{eqnarray}
x  \in E_{\theta}^{u}&\iff\left\Vert df^{i}\left(
\theta\right)  x\right\Vert \leq C\lambda^{\left\vert i\right\vert }\left\Vert
x\right\Vert ,\qquad i\leq0,\label{eq:expansion-condition}\\
y \in E_{\theta}^{s}&\iff\left\Vert df^{i}\left(
\theta\right)  y\right\Vert \leq C\lambda^{i}\left\Vert y\right\Vert ,\qquad
i\geq0,\label{eq:contraction-condition}\\
w \in T_{\theta}\Lambda&\iff\left\Vert df^{i}\left(
\theta\right)  w\right\Vert \leq C\mu^{\left\vert i\right\vert }\left\Vert
w\right\Vert ,\qquad i\in\mathbb{Z}.\label{eq:central-condition}%
\end{eqnarray}

\end{definition}

We have the following lemma which states that normal hyperbolicity implies the
covering condition.

\begin{lemma}
\label{lem:nhim-covers}Let $\Lambda$ be a compact normally hyperbolic
invariant manifold for a diffeomorphism $f:M\rightarrow
M$, and let $k\in\mathbb{N}$ satisfy $k>\log_{\lambda}\frac{1}{C}%
$. Then there exists a neighborhood $D$ of $\Lambda$ such that
$D\overset{f^k}{\Longrightarrow}D.$
\end{lemma}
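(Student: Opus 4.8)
The plan is to build the neighborhood $D$ as a unit-disk bundle with respect to an adapted metric, and then produce the homotopy $h$ required by Definition \ref{def:covering} directly from the invariant splitting \eqref{eq:splitting} and the estimates \eqref{eq:expansion-condition}--\eqref{eq:central-condition}. First I would fix an adapted (Lyapunov) norm on $T_\Lambda M$ so that the constant $C$ can be absorbed: after passing to the $k$-th iterate, $g := f^k$ expands $E^u$ by a factor $>1$, contracts $E^s$ by a factor $<1$, and distorts $T\Lambda$ by a factor between $\lambda^{k} C < 1$ and $\mu^k C$ in each direction (the choice $k > \log_\lambda \tfrac1C$ is exactly what makes the stable/unstable factors strictly better than $1$). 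Using tubular-neighborhood coordinates for $\Lambda \subset M$ given by the bundle $E^u \oplus E^s$, I would identify a neighborhood of $\Lambda$ with an open subset of $E = E^u \oplus E^s$ and let $D$, $D^-$, $D^+$ be the sets from \eqref{eq:Ddef} for a sufficiently small radius (rescaling the fiber norms so the radius is $1$). The map $\eta : \Lambda \to \Lambda$ will be $\eta := f^k|_\Lambda$ (well-defined since $f(\Lambda) = \Lambda$), and since $f^k|_\Lambda$ is a diffeomorphism of a compact connected manifold, $\deg_2 \eta = 1 \neq 0$, giving \eqref{eq:deg-covering}. The linear maps $A_\theta : E^u_\theta \to E^u_{\eta(\theta)}$ will be the restriction $df^k(\theta)|_{E^u_\theta}$, which is expanding by the adapted-norm choice, so $A_\theta(\partial B^u_\theta) \subset E^u_{\eta(\theta)} \setminus B^u_{\eta(\theta)}$ as required by part 2a.

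Next I would construct the homotopy $h : [0,1] \times D \to E$. The natural candidate is to compose $g = f^k$ (read in the tubular coordinates) with a straight-line deformation that (i) flattens the $E^s$-component to $0$ and (ii) replaces the nonlinear dependence of the $E^u$- and base-components on the fiber variables by their linearizations along $\Lambda$, ending at the model map $(\theta; x, y) \mapsto (\eta(\theta); A_\theta x, 0)$. Concretely, writing $g(\theta; x, y) = (\bar\theta(\theta,x,y);\, \bar x(\theta,x,y),\, \bar y(\theta,x,y))$, a convenient homotopy is $h(t,\cdot)$ interpolating first in $t \in [0,\tfrac12]$ from $g$ to the map that kills the $\bar y$-component and freezes $\theta$-dependence appropriately, and then in $t \in [\tfrac12,1]$ to the linear model; one can also do it in a single convex interpolation in the fiber directions since $E^u_{\bar\theta}$ and $E^s_{\bar\theta}$ are vector spaces. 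The two conditions to verify are \eqref{eq:topological-expansion}, that $h([0,1]\times D^-) \cap D = \emptyset$, and \eqref{eq:topological-contraction}, that $h([0,1]\times D) \cap D^+ = \emptyset$. The first follows because on $D^-$ we have $\|x\|_u = 1$, and throughout the homotopy the $E^u$-component has norm bounded below by $\min(\text{expansion factor of }A, \text{expansion factor of }\bar x) > 1$ up to an error controlled by the size of the neighborhood; hence it stays outside $D$. The second follows because on all of $D$ the $E^s$-component is being continuously shrunk toward $0$ and $g$ already contracts $E^s$ (again up to a small error from the neighborhood radius), so its norm never reaches $1$.

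The main obstacle, and the step that needs genuine care rather than routine estimation, is making the homotopy respect the bundle structure globally and simultaneously satisfying both boundary conditions with a single choice of neighborhood radius. The subtlety is twofold: first, $g$ does not carry fibers to fibers — $\bar\theta$ depends on $x$ and $y$, not just $\theta$ — so the "linearization along $\Lambda$" used to define the endpoint of the homotopy must be set up carefully (this is precisely why Definition \ref{def:covering} was phrased to allow $h(1,\cdot)$ rather than $f$ itself to be the model map). Second, the error terms controlling the deviation of $g$ from its linear model on a radius-$r$ neighborhood are $o(r)$ by smoothness and compactness of $\Lambda$, uniformly in $\theta$; one must choose $r$ small enough that these errors are dominated by the spectral gaps $1 - (\text{unstable rate})^{-1}$ and $1 - (\text{stable rate})$ simultaneously. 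A clean way to organize this is: (1) fix the adapted norm and hence the gaps; (2) shrink $r$ using a uniform $C^1$-estimate on $g$ near $\Lambda$ so that the unstable component of $g$ on $D^-$ has norm $> 1$ and the stable component of $g$ on $D$ has norm $< 1$ with room to spare; (3) check that the explicit convex homotopy stays within these margins at every $t$, which is immediate once the endpoints do and the intermediate maps are convex combinations in the fibers. Orientability is never used, so the degree-mod-$2$ formulation causes no difficulty, and invertibility of $f$ is used only to know $f^k|_\Lambda$ is a diffeomorphism (so $\deg_2 = 1$), which is available by hypothesis.
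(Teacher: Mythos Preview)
Your approach is essentially correct but differs from the paper's in a structurally interesting way. You work directly with $g=f^k$ in tubular coordinates and control the nonlinear deviation from the fiberwise-linear model by shrinking the radius; the paper instead proves the covering for the \emph{linearized} bundle map $l(\theta;x,y):=(f(\theta);df(\theta)x,df(\theta)y)$, where the homotopy is trivial to write down (just scale the $E^s$-component by $1-\beta$) because $l$ genuinely carries fibers to fibers, and then invokes the Pugh--Shub topological linearization theorem \cite{Pugh1970} to conjugate $l^k$ to $f^k$ near $\Lambda$, pulling $D(r)$ back by the conjugacy to obtain the neighborhood $D$. The paper's route is much shorter and sidesteps exactly the obstacle you identify as the main one---the moving base point $\bar\theta(\theta,x,y)$ that prevents a naive convex fiberwise interpolation---at the cost of importing a nontrivial linearization result. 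Your route is more self-contained and makes the ``sufficiently small $r$'' explicit in terms of $C^1$ data, but the homotopy construction you sketch (interpolating in fibers that are themselves moving with $t$) needs more care than your outline gives it; you would likely end up passing through local trivializations or an ambient embedding to make the convex combinations meaningful, which is workable but adds bookkeeping the paper avoids entirely.
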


The proof is given in section \ref{sec:nhim-covers}.

From Theorem \ref{th:invariance} and Lemma \ref{lem:nhim-covers} we obtain the
following corollary.

\begin{corollary}
\label{cor:nhim-persists}Assume that $\Lambda$ is a compact normally
hyperbolic invariant manifold for a diffeomorphism $f:M\rightarrow M$, and assume that $D$ is such that $D\overset{f^k}{\Longrightarrow}D$. Let
$f_{\alpha}:M\rightarrow M$ be a family of
continuous maps, which depends continuously on $\alpha$. Assume that
$f_{0}=f$. Then for $\alpha$ for which
\begin{equation}
D\overset{f^k_{\alpha}}{\Longrightarrow}D,\label{eq:pert-cond}%
\end{equation}
$\Lambda$ persists as an invariant set of $f_{\alpha}$. Moreover, this set
projects surjectively onto $\Lambda$.
\end{corollary}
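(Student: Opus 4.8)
The plan is to derive Corollary~\ref{cor:nhim-persists} as an essentially immediate consequence of the two results already in hand: Lemma~\ref{lem:nhim-covers}, which converts normal hyperbolicity into the covering condition $D\overset{f^k}{\Longrightarrow}D$ for a suitable neighbourhood $D$ of $\Lambda$ and suitable $k$, and Theorem~\ref{th:invariance}, which turns a covering condition into a full invariant set that projects surjectively onto the base manifold. The only genuine work is to connect the hypotheses: we are given a continuous family $f_\alpha$ with $f_0=f$, and we assume that for the particular $\alpha$ under consideration the perturbed iterate still satisfies $D\overset{f_\alpha^k}{\Longrightarrow}D$; from there Theorem~\ref{th:invariance} must be applied to $f_\alpha^k$ rather than to $f_\alpha$ itself, and the persistence statement has to be phrased correctly in terms of the original map $f_\alpha$.

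Concretely, I would first invoke Lemma~\ref{lem:nhim-covers}: choose $k\in\mathbb{N}$ with $k>\log_\lambda\frac1C$, obtaining a neighbourhood $D$ (of the standard form in \eqref{eq:Ddef}, via a tubular-neighbourhood identification of a neighbourhood of $\Lambda$ in $M$ with the unit disc bundle of $E^u\oplus E^s$) such that $D\overset{f^k}{\Longrightarrow}D$; this $D$ is the one fixed in the statement. Next, for the given $\alpha$ satisfying \eqref{eq:pert-cond}, apply Theorem~\ref{th:invariance} with the map ``$f$'' of that theorem taken to be $f_\alpha^k:D\to E$. Theorem~\ref{th:invariance} then yields, for every $\theta\in\Lambda$, a bi-infinite sequence $\{v_i\}_{i\in\mathbb{Z}}\subset D$ with $v_0\in D_\theta$ and $f_\alpha^k(v_i)=v_{i+1}$ for all $i$.

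The set produced this way is invariant under $f_\alpha^k$, not obviously under $f_\alpha$; so the next step is to pass from $f_\alpha^k$-invariance to $f_\alpha$-invariance. The clean way is to let
\[
\Lambda_\alpha:=\bigl\{\,v\in D\ \big|\ f_\alpha^{\,n}(v)\in D\ \text{for all }n\in\mathbb{Z}\,\bigr\},
\]
i.e. the maximal invariant set of $f_\alpha$ in $D$ (using that $f_\alpha$, being close to the diffeomorphism $f$, is defined and injective on a neighbourhood of $\Lambda$, so negative iterates make sense where they stay in $D$). This set is $f_\alpha$-invariant by construction. To see it projects onto all of $\Lambda$, fix $\theta\in\Lambda$ and take the $f_\alpha^k$-orbit $\{v_i\}_{i\in\mathbb{Z}}$ through $D_\theta$ given above; the point $v_0$ together with its intermediate $f_\alpha$-iterates $f_\alpha^{j}(v_0)$, $0\le j<k$, which lie in $D$ because $D\overset{f_\alpha^k}{\Longrightarrow}D$ forces $f_\alpha^j(D_\theta)$ to stay within $D$ along the homotopy at time $0$ — more carefully, one should observe that the covering homotopy keeps $D^-$ out of $D$, so a point whose full $f_\alpha^k$-orbit lies in $D$ cannot have an intermediate iterate leaving through $D^-$; combined with the contraction side keeping it off $D^+$, the intermediate iterates stay in $D$. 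Hence $v_0\in\Lambda_\alpha$ and $\pi(v_0)=\theta$, giving surjectivity of $\pi|_{\Lambda_\alpha}$ onto $\Lambda$. Finally, one remarks that for $\alpha=0$ this $\Lambda_\alpha$ contains $\Lambda$ itself, so the family $\Lambda_\alpha$ is legitimately a continuation of $\Lambda$.

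The step I expect to require the most care — the ``main obstacle'', though it is more a matter of bookkeeping than of real difficulty — is the passage from $f_\alpha^k$-invariance to $f_\alpha$-invariance together with the surjectivity onto $\Lambda$: one has to argue that an orbit of $f_\alpha^k$ staying in $D$ can be ``filled in'' by $f_\alpha$-iterates that also stay in $D$, which uses the structure of the covering homotopy (exit set mapped outside $D$, image disjoint from $D^+$) rather than just the abstract conclusion of Theorem~\ref{th:invariance}. An alternative that sidesteps this is simply to state the conclusion as ``$\Lambda$ persists as an invariant set of $f_\alpha^k$'' and note that the maximal $f_\alpha$-invariant subset of $D$ contains it; depending on how much the authors want to claim, either route closes the proof. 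Everything else — the choice of $k$, the identification of $D$, the two invocations — is direct citation of the preceding lemma and theorem.
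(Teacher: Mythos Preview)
The paper gives no explicit proof of this corollary; it simply states that it follows from Theorem~\ref{th:invariance} and Lemma~\ref{lem:nhim-covers}. Your core approach---invoke Lemma~\ref{lem:nhim-covers} to obtain $D$ and $k$, then apply Theorem~\ref{th:invariance} to the map $f_\alpha^k$ under the hypothesis~\eqref{eq:pert-cond}---is exactly this, and is correct.

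However, your additional discussion contains a genuine gap. Your argument that intermediate iterates $f_\alpha^j(v_0)$ for $0<j<k$ remain in $D$ does not work: the covering relation $D\overset{f_\alpha^k}{\Longrightarrow}D$ is a statement purely about the composite map $f_\alpha^k$ and its associated homotopy; it places no constraint whatsoever on where the individual iterates $f_\alpha,\ldots,f_\alpha^{k-1}$ send points of $D$. In particular, a point with a full $f_\alpha^k$-orbit in $D$ can perfectly well have $f_\alpha^j(v_0)\notin D$ for some $0<j<k$. (Your phrasing ``cannot have an intermediate iterate leaving through $D^-$'' conflates the exit-set condition for $f_\alpha^k$ with one for $f_\alpha$.) Separately, your definition of $\Lambda_\alpha$ via two-sided iterates $f_\alpha^n$, $n\in\mathbb{Z}$, presupposes invertibility, whereas the corollary allows $f_\alpha$ to be merely continuous; Theorem~\ref{th:invariance} is stated in terms of orbits $\{v_i\}$ with $f_\alpha(v_i)=v_{i+1}$ precisely to accommodate this.

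Your proposed alternative---reading the conclusion as persistence of an invariant set for $f_\alpha^k$, which projects surjectively onto $\Lambda$---is what the cited results actually deliver, and is the honest interpretation of the paper's one-line derivation. The paper's wording ``invariant set of $f_\alpha$'' is arguably loose on exactly the point you flagged.
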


Corollary \ref{cor:nhim-persists} ensures that for a perturbation of $f$ the
NHIM will persist as an invariant set. In \cite{Floer} Floer proved a
similar result. He has shown that if $f_{\alpha}$ are homeomorphisms which
are close enough to $f$, then the NHIM persists along with its cohomology
ring. The first difference between our result and Floer's is that Corollary
\ref{cor:nhim-persists} provides a verifiable condition (\ref{eq:pert-cond})
for the persistence of the NHIM, effectively getting rid of the `close enough'
part of the Floer's statement. (For small $\lambda$ (\ref{eq:pert-cond}) will
hold, and for a particular system we can explicitly check for which $\lambda$
(\ref{eq:pert-cond}) will be satisfied.) In our setting, the existence of the
NHIM is in fact not even necessary, since (\ref{eq:pert-cond}) alone
establishes the existence of the invariant sets. Another difference is that in
Corollary \ref{cor:nhim-persists} it is enough that $f_{\alpha}$ are
continuous; we do not need them to be homeomorphisms as is required in
\cite{Floer}. Floer proves that the cohomology ring of the invariant set which persists contains the
cohomology ring of the original manifold as a subring. We prove that the
topology of the original manifold is in a sense `preserved', but in our
statement this is expressed by the fact that the invariant set which persists
projects surjectively onto the original NHIM. Moreover, we know by Theorem
\ref{th:continuation} that the invariant manifold `continues' in the sense
that the sets for different parameters are linked on each fiber by a compact
connected component.

A desirable feature of our result is that the covering condition
(\ref{eq:pert-cond}) can be checked using computer assisted techniques, which
makes our results applicable in practice. 

From Remark \ref{rem:sequences} and Lemma \ref{lem:nhim-covers} we also obtain the following result for random perturbations of NHIMs. (See \cite{Bates} for a similar persistence result of NHIMs for random perturbations of flows.)
\begin{corollary}
\label{cor:random} Let $\Lambda$ be a compact normally hyperbolic invariant
manifold for a diffeomorphism $f:M\rightarrow M$, and let $k\in\mathbb{N}$
satisfy $k>\log_{\lambda}\frac{1}{C}$. Assume that $D$ is such that
$
D\overset{f^k}{\Longrightarrow}D.
$
Let $(\Omega,\mathcal{F},\mathbb{P})$ be a probability space, let 
$T:\Omega\rightarrow\Omega$ and let $\phi:\mathbb{Z}\times\Omega\times
M\rightarrow M$ be a random dynamical system over $T$, i.e. $\phi\left(
0,\omega\right)  =Id$ and
\[
\phi(n+m,\omega)=\phi(n,T^{m}(\omega))\phi\left(  m,\omega\right)  .
\]
If $\phi$ is close enough to $f$ so that for any $\omega\in\Omega$, 
$
D\overset{\phi(k,\omega)}{\Longrightarrow}D,
$
then the NHIM persists as a set of trajectories of
$\phi$. Moreover, the set projects surjectively onto $\Lambda$.
\end{corollary}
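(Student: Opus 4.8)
The plan is to reduce Corollary \ref{cor:random} to an application of Remark \ref{rem:sequences}, in exactly the same way that Corollary \ref{cor:nhim-persists} is deduced from Theorem \ref{th:continuation}. First I would fix an arbitrary $\omega \in \Omega$ and observe that the random dynamical system generates, along the forward orbit of $T$, a sequence of maps $g_i := \phi(k, T^{ik}(\omega)) : M \to M$ for $i \in \mathbb{N}$ (and, if invertibility of the time-$k$ maps is available, also for $i \in \mathbb{Z}$ using the cocycle identity $\phi(-m,\omega) = \phi(m, T^{-m}(\omega))^{-1}$). The cocycle property $\phi(n+m,\omega) = \phi(n, T^m(\omega))\phi(m,\omega)$ gives $\phi(nk,\omega) = g_{n-1} \circ \cdots \circ g_1 \circ g_0$, so that iterating the composed sequence of $g_i$ is the same as sampling the random orbit at multiples of $k$. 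By hypothesis $D \overset{\phi(k,\omega)}{\Longrightarrow} D$ for \emph{every} $\omega$, hence $D \overset{g_i}{\Longrightarrow} D$ for every $i$, and Remark \ref{rem:sequences} applies to the nonautonomous system $v_{i+1} = g_i(v_i)$.

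Next I would invoke the conclusion of Remark \ref{rem:sequences}: since $D \overset{g_i}{\Longrightarrow} D$ holds for all $i$ (for all $i \in \mathbb{N}$ in the forward case, for all $i \in \mathbb{Z}$ in the two-sided case), there is a trajectory $\{v_i\}$ of the composed sequence that stays in $D$, and moreover — reading off the surjectivity statement that is already part of Theorems \ref{th:forward-invariance} and \ref{th:invariance} — for each prescribed $\theta \in \Lambda$ one can arrange $v_0 \in D_\theta$. Translating back through the cocycle identity, the points $w_n := \phi(nk, \omega) v_0$ coincide with $v_n$ and hence remain in $D$; interpolating, the full random orbit $\{\phi(n,\omega) v_0 : n\}$ stays in $D$ because $D$ is mapped into a neighborhood of $\Lambda$ under the intermediate iterates (this is the same minor bookkeeping used when passing from $f^k$-invariance to $f$-invariance in the deduction of Corollary \ref{cor:nhim-persists}; one should check that $\phi(j,\omega) D$ for $0 \le j < k$ stays in a fixed compact neighborhood of $\Lambda$, which follows from continuity and closeness of $\phi$ to $f$). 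The resulting set of $v_0$'s, as $\theta$ ranges over $\Lambda$, is exactly the set of trajectories of $\phi$ persisting from $\Lambda$, and its projection to $\Lambda$ is surjective by construction.

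Finally, for the surjectivity onto $\Lambda$: for each $\theta \in \Lambda$ the above produces at least one $\omega$-orbit through $D_\theta$, so the persisting set intersects every fiber $E_\theta$, i.e. $\pi$ restricted to it is onto $\Lambda$. One should be a little careful about whether the persisting set is required to be a single $\phi$-invariant set (as a subset of $M$, invariant under the cocycle in the skew-product sense over $(\Omega,\mathcal F,\mathbb P,T)$) or merely a collection of bi-infinite trajectories; the cleanest formulation, matching the statement, is to assert the existence of the trajectories themselves, which is what Remark \ref{rem:sequences} directly delivers.

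The main obstacle I anticipate is not the topological input — that is black-boxed by Remark \ref{rem:sequences} — but the two bits of bookkeeping around it: (i) correctly setting up the two-sided sequence $\{g_i\}_{i\in\mathbb{Z}}$ when the random maps need not be invertible (in the forward-only case this is a non-issue, and one may simply restrict to the forward statement, which is presumably all that is claimed), and (ii) passing from $\phi(k,\cdot)$-invariance of $D$ to genuine invariance of the full-time random orbit in (a neighborhood equal to) $D$, which requires a uniform-in-$\omega$ closeness estimate to control the $k-1$ intermediate steps. Both are routine given the hypotheses, but they are the only places where something must actually be verified rather than quoted.
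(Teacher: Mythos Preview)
Your approach is essentially the paper's: the corollary is stated as a direct consequence of Remark~\ref{rem:sequences} (together with Lemma~\ref{lem:nhim-covers} for the existence of $D$), and your reduction to the nonautonomous sequence $g_i=\phi(k,T^{ik}(\omega))$ is exactly that application.

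Two small remarks on your bookkeeping concerns. For (i), you do not need invertibility of the time-$k$ maps to define the two-sided sequence: what is needed is that $T^{ik}(\omega)\in\Omega$ for $i<0$, i.e.\ invertibility of the base shift $T$ (standard in the random dynamical systems setup implicit here). Once the $g_i$ are defined, Remark~\ref{rem:sequences} applies verbatim to non-invertible $g_i$, so no extra hypothesis on $\phi$ is required. For (ii), the paper does not address the intermediate steps $0<j<k$ at all; the statement is read as persistence of trajectories of the sampled dynamics $v_{i+1}=\phi(k,T^{ik}(\omega))v_i$ in $D$, which is precisely what Remark~\ref{rem:sequences} yields, so this concern does not arise in the intended interpretation.
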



\section{Examples of application.\label{sec:example}}
The perturbations of a system with a NHIM can be
\correction{comment 6}{such that }the perturbed maps are no longer normally hyperbolic, but we can still apply our results. Below we give an example
of such a system. 
\subsection{Toy example\label{sec:toy}}
\begin{figure}[ptb]
\begin{center}
\includegraphics[height=4.5cm]{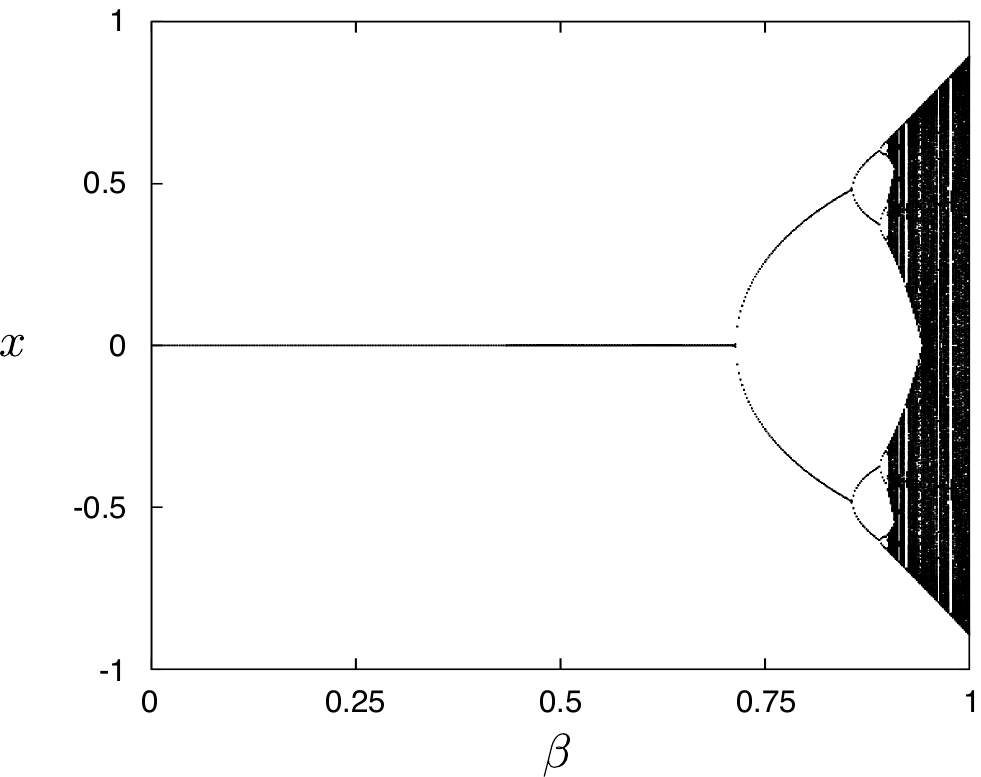}
\end{center}
\caption{The projection onto the $x$ coordinate of the invariant set from the
example from section \ref{sec:example}, depending on the parameter $\beta$. }%
\label{fig:logist}%
\end{figure}

We start with an example where the dynamics on the unstable coordinate is decoupled from the rest of the coordinates. The aim is to provide a simple model on which we can demonstrate some features, without having to engage in computations.

Let $\Lambda$ be a one dimensional circle, parameterized by $\theta\in
\lbrack0,2\pi)$. Let $E^{u}$ be a trivial bundle over $\Lambda$ (i.e.,
$E^{u}=\Lambda\times\mathbb{R}$), let $E^{s}$ be a M\"{o}bius bundle over
$\Lambda$, and let $E=E^{u}\oplus E^{s}$. Take $\mu\in\mathbb{R}$,
$|\mu|<\frac{1}{2}$, and two maps $f_{0},f_{1}:E\rightarrow E$ defined as
\begin{eqnarray}
f_{0}\left(  \theta;x,y\right)   &  =\left(  3\theta\,\mathrm{mod}%
\,2\pi;4x,\mu y\right)  ,\nonumber\\
f_{1}\left(  \theta;x,y\right)   &  =\left(  3\theta\,\mathrm{mod}%
\,2\pi;-3x+5x^{3},\frac{1}{2}\sin\theta+\mu y\right)  . \label{eq:f-example}%
\end{eqnarray}
The maps $f_{0}$ and $f_{1}$ expand the M\"{o}bius strip along $\theta$,
wrapping it around itself three times, and squeeze it along the $y$ coordinate
(see Figure \ref{fig:mobius}). On the $x$ coordinate we have decoupled dynamics.

\begin{figure}[ptb]
\begin{center}
\includegraphics[height=4.5cm]{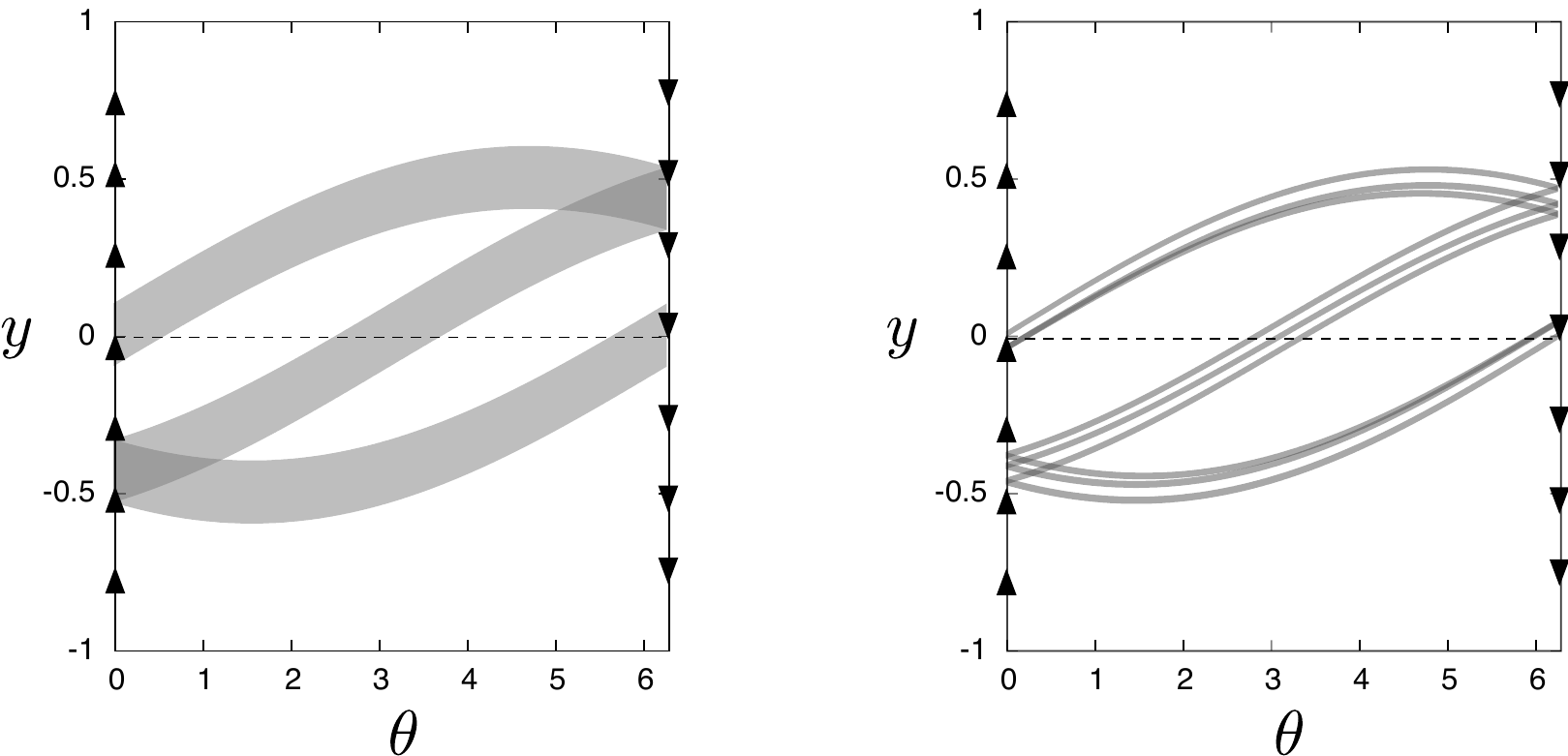}
\end{center}
\caption{The image of $f_{1}(D)$ (left) and $(f_{1})^{2}(D)$ (right), for the
map $f_{1}$ defined in (\ref{eq:f-example}) from the example from section
\ref{sec:example}, projected onto the M\"{o}bius bundle $E^{s}$. On the plot,
the vertical edge $\{\theta=0\}$ is identified with the vertical edge
$\{\theta=2\pi\}$ with reversed sign (which is indicated by arrows on the
plots). Here we took $\mu=\frac{1}{10}$ in (\ref{eq:f-example}). }%
\label{fig:mobius}%
\end{figure}In this example we will discuss invariant sets for a family of
maps $f_{\beta}:E\rightarrow E$, defined as
\[
f_{\beta}:=\left(  1-\beta\right)  f_{0}+\beta f_{1},\qquad\text{for }\beta
\in\left[  0,1\right]  .
\]
For $\beta=0$ the set $\left\{  x=y=0\right\}  $ is invariant, and on it the
rate conditions hold; i.e. the dynamics in the hyperbolic directions $x,y$ is
stronger than on $\theta$. As we increase $\beta$, the expansion along $x$
becomes weaker than the expansion along $\theta$. This means that the
classical tools can not ensure that the manifold survives. If we take though
$D=\left\{  \left(  \theta;x,y\right) |\;\left\vert x\right\vert
\leq1,\ \left\vert y\right\vert \leq1\right\}  $, fix $\beta\in\lbrack0,1]$,
consider a homotopy
\[
h\left(  \alpha,\left(  \theta;x,y\right)  \right)  =\left(  1-\alpha\right)
f_{\beta}\left(  \theta;x,y\right)  +\alpha\left(  3\theta\,\mathrm{mod}%
\,2\pi;2x,0\right)  ,
\]
and $\eta(\theta)=3\theta\, \mathrm{mod}\, 2\pi$, then it is a simple exercise
to verify that%
\begin{equation}
D\overset{f_{\beta}}{\Longrightarrow}D. \label{eq:example-covering}%
\end{equation}
The reason why (\ref{eq:example-covering}) holds boils down to the fact that
on the $y$ coordinate we have contraction and the cubic terms on the
coordinate $x$ ensure expansion away from zero. Since $\eta\left(
\theta\right)  =3\theta\,\mathrm{mod}\,2\pi$, we see that $\deg
_{2}\left(  \eta\right)  =1$.

Theorem \ref{th:invariance} ensures that for any $\beta\in\left[  0,1\right]
$ there is an invariant set in $D$, with trajectories in $D$ passing through
each $\theta\in\lbrack0,2\pi)$. Theorem \ref{th:invariance} does not claim
that the invariant set is a manifold. In fact it is not a manifold, which we
can see if we look at the projections in Figures \ref{fig:logist} and
\ref{fig:mobius}. Figure \ref{fig:logist} contains the plot of the invariant set of
$x\mapsto\pi^{u}\circ f_{\beta}\left(  \theta;x,y\right)  $ for $\beta
\in\left[  0,1\right]  $. (The dynamics of $f_{\beta}$ on $x$ is decoupled
from other variables, so the set is independent from the choice of $\theta
,y$.) We see that for $\beta$ close to 1 our set will be chaotic. This is
because the function passes through logistic type bifurcations as we increase
$\beta$. In Figure \ref{fig:mobius} we take the parameter $\mu=\frac{1}{10}$,
fix $\beta=1$ and plot the projections of $f_{\beta}\left(  D\right)  $ and
$f_{\beta}^{2}\left(  D\right)  $ onto the M\"{o}bius strip. We see that if we
were to consider $f_{\beta}^{k}\left(  D\right)  $ for higher $k$, then we
would see the emergence of a Cantor structure of our invariant set. Theorem
\ref{th:continuation} states that the resulting invariant set for different
$\beta$ `continues' as the parameter changes, which we see is the case in
our example.

The main feature of this example is that we have started with a manifold which
satisfied the rate conditions, and perturbed the system into the parameter
range where the rate conditions fail. Nevertheless, our method establishes the
existence of an invariant set for all parameters.

In our example the dynamics on $x$ is decoupled from the dynamics on the
M\"{o}bius bundle. We have done this for simplicity. The assumptions of
Theorems \ref{th:invariance}, \ref{th:continuation} are robust under small
perturbations, so we will also obtain the results for any map that is
appropriately close to $f_{\beta}$, for one of the $\beta\in\left[
0,1\right]  .$

In our example we were able to verify (\ref{eq:deg-covering}) because
$f_{\beta}$ on coordinate $\theta$ were given as $3\theta\,\mathrm{mod}%
\,2\pi$. If we were to take $k\theta\,\mathrm{mod}\,2\pi$ with an even
number $k$, then we would get $\deg\left(  \eta\right)  =0$, and we would not
be able to apply Theorem \ref{th:invariance}. We finish by observing that in
such a setting we can still use Theorem \ref{th:forward-invariance} to obtain an
invariant set of points that stay in $D$ for all (forward) iterations.

The above was just a toy example. Similar features though can be found for
instance in the Kuznetsov system (see \cite{Kuzn,Wilczak}), where we have a
hyperbolic invariant set in $\mathbb{R}^{3}$, which has a Cantor set structure.

\begin{figure}[ptb]
\begin{center}
\includegraphics[height=5cm]{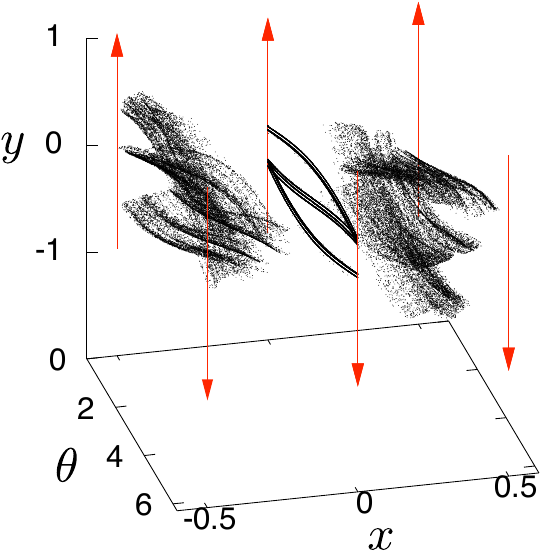}
\end{center}
\caption{The invariant set of (\ref{eq:CAP-map}).}%
\label{fig:CAP}%
\end{figure}
\correction{comment 3}{
\subsection{An example with a computer assisted proof\label{sec:CAP}}
Here we modify our example from section \ref{eq:CAP-map} by coupling the dynamics between the
coordinates. We consider the following map}
\correction{}{
\begin{eqnarray}
& f\left(  \theta;x,y\right)  = \label{eq:CAP-map}  \\ 
& \left(  3\theta+xy\sin(\theta)\text{
}\mathrm{mod}\text{ }2\pi;4x^{3}-\frac{8}{5}x+\frac{1}{2}xy,\mu
y+\frac{2}{5}\sin\theta+x\cos(\theta)\right)  , \nonumber
\end{eqnarray}
with $\mu=1/10.$ This map results from taking $f_{\beta}$ from the previous
section with $\beta=0.8$, and by adding the coupling terms $xy\sin(\theta)$,
$\frac{1}{2}xy$ and $x\cos(\theta)$ to the $\theta$-, $x$- and $y$-coordinates, respectively. The choice of such coupling was to a large extent arbitrary. We
wanted a nontrivial but simple example, with some interesting features.}

\correction{}{In Figure \ref{fig:CAP} we give the plot of a numerically obtained representation of the
invariant set we will establish. On the plot, the front face $\theta=2\pi$ is
identified with the back face $\theta=0$, but should be glued together
according to the arrows to take into account the fact that $E_{s}$ is a Mobius bundle. }

\correction{comment 1}{For $x=0$ we see the invariant set from our previous example (compare 
Figures \ref{fig:mobius} and \ref{fig:CAP}); we have intentionally chosen our coupling to
preserve it. The coupling is strong enough to distort the two attracting fixed points
of the uncoupled map $f_{\beta=0.8}$ on $x$ (See Figure \ref{fig:logist}) to become the two `chaotic clouds' from Figure \ref{fig:CAP}.}

\correction{}{For this example we provide a computer assisted proof that for $D=\left\{
\left(  \theta;x,y\right)  :\left\Vert x\right\Vert \leq1,\left\Vert
y\right\Vert \leq1.2\right\}  $ we will have $D\overset{f}{\Longrightarrow}D$,
which, by Theorem \ref{th:invariance}, implies the existence of an invariant set
in $D$. This is done by considering the following homotopy 
\begin{eqnarray}
h\left(  \alpha,\left(  \theta;x,y\right)  \right)   :=&(3\theta+\left(
1-\alpha\right)  xy\sin(\theta)\text{ }\mathrm{mod}\text{ }2\pi; \label{eq:homotopy-CAP} \\
& \alpha2x+\left(  1-\alpha\right)  \left(  4x^{3}-\frac{8}{5}x+\frac
{1}{2}x  y\right), \nonumber \\
& \left(  1-\alpha\right)  \left(  \mu y+\frac{2}{5}\sin\theta
+x\cos(\theta)\right)  ). \nonumber
\end{eqnarray}}

\correction{}{Condition (\ref{eq:deg-covering}) follows directly from the definition of $h$. We
validate (\ref{eq:topological-expansion}--\ref{eq:topological-contraction}) by using interval arithmetic. Interval arithmetic involves enclosing
numbers in intervals that account for possible round-off errors, and
performing arithmetic operation on these intervals. The output of these
operations are intervals, which account for the numerical error and enclose
the true result. }

\correction{}{We give the full code which we have used for our computer
assisted proof in \ref{sec:app-code} and follow with a number of comments associated to the particular routines.
The validation of (\ref{eq:topological-expansion}%
--\ref{eq:topological-contraction}) is based on subdividing the domains into
small sets and checking the correct topological alignment by means of
inequalities between intervals. A sample of such bounds obtained by our computer program is
depicted  in Figure \ref{fig:CAP-covering}.}

\begin{figure}[ptb]
\begin{center}
\includegraphics[height=3cm]{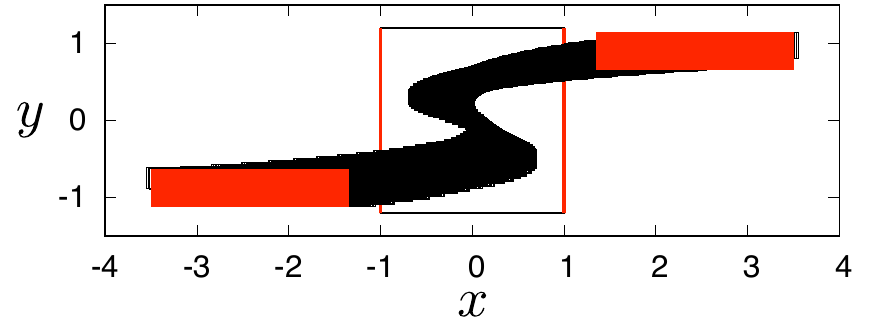}
\end{center}
\caption{A sample of computations performed by the computer. We subdivide $D$ into small cubes, and check inequalities which ensure that local projections onto $x,y$ 
do not intersect with $D^+$. We also compute images of cubes from $D^-$ (in red) and check that they map to the left and to the right of the set $D$.}%
\label{fig:CAP-covering}%
\end{figure}

\correction{comment 2}{
\subsection{Finding invariant sets and covering relations\label{sec:finding-sets}}
If the system under consideration possesses a NHIM, then it is a natural choice to position $D$ around the NHIM, aligning $D^+$ and $D^-$ with the stable and unstable bundles, respectively. When the perturbation is far from the normally hyperbolic case, or if we want to apply our methods in a setting where no NHIM exists, we can use the following numerical method.
}

\correction{}{We can select some domain within which we expect to find our invariant object, and subdivide it into cubes. Then, using interval arithmetic we can propagate such cubes and discard those that will leave the domain after some iterate of the map. Those cubes that do not escape are dissected into smaller cubes, and the procedure can be repeated. If some invariant set is within our domain, it will be detected by this method.}

\correction{}{The reason for using interval arithmetic is that even if just a single point from a considered cube is an element of the invariant object, then it will not leave the domain, and the cube will not be discarded. (For instance, the discussed methodology works very well in the normally hyperbolic setting, to find an enclosure of the stable manifold.)}

\correction{}{The positioning of the enclosure that comes out of the algorithm can give an insight into how $D^+$ and $D^-$ should be positioned. In Figure \ref{fig:domains} we show an outcome of the procedure applied to the map (\ref{eq:CAP-map}) for domains of the form $\{(\theta;x,y): x^2+y^2<r^2\}$, for $r=2$ and $r=1/2$. (On the plot we depict cubes with $\theta=\pi/3$, because we took a liking to the shape on the right hand side.) For $r=2$ we see that $D^+$ should be towards the vertical and $D^-$ towards the horizontal axis. For $r=1/2$ though we obtain an enclosure that does not give a clear indication how $D^+$ and $D^-$ could be positioned. Finding suitable $D$ in complicated systems is not an easy task and is likely to involve trial and error.  }

\begin{figure}[ptb]
\begin{center}
\includegraphics[height=5cm]{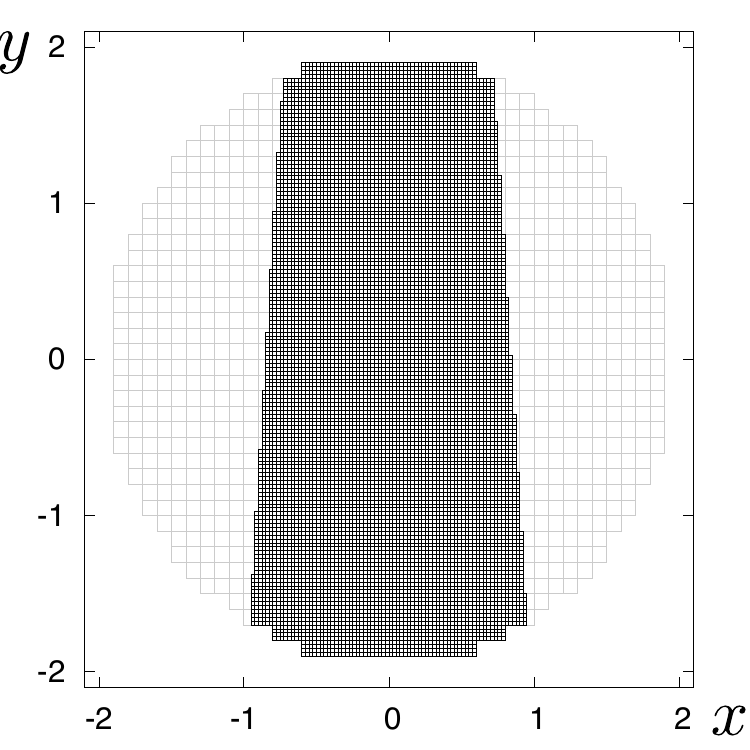}\hspace{1cm}\includegraphics[height=5cm]{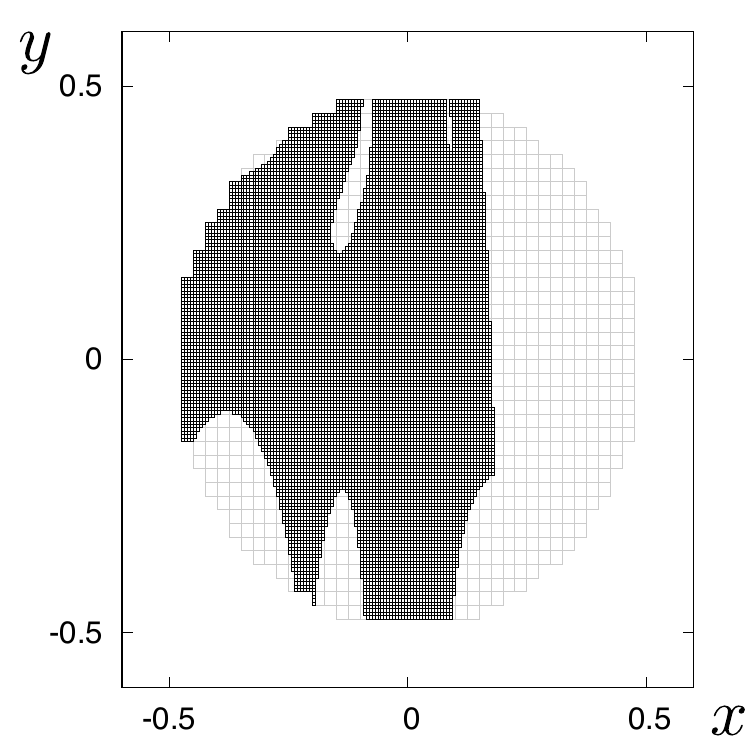}
\end{center}
\caption{In grey is the set of initial cubes containing $\theta=\pi/3$ at the start of our algorithm. The left plot is for $r=2$ and the right for $r=1/2$. The black cubes are those left after two steps of the procedure.\label{fig:domains}}%
\end{figure}

\section{Embedding into reals}

In this section we shall embed $E$ in $\mathbb{R}^{N}$.
We will then extend the map $f$ so that it is defined on a set with nonempty
interior in $\mathbb{R}^{N}$. \correction{comment 7}{Such embedding will be useful for us since to find two points $v_1, v_2\in D$ such that $v_2 = f(v_1)$ we will be able to do so more easily by embedding $f(v_1)$ and $v_2$ into $\mathbb{R}^{N}$, computing their difference, and solving for zero. Searching for zeros in $\mathbb{R}^{N}$ is more tractable than finding two points on a vector bundle that map one into the other.} 

The vector bundle $E$ is an $n$-dimensional smooth manifold, $n=c+u+s$. By the
Whitney embedding theorem \cite{Whitney}  there exists a smooth embedding $\omega
:E\rightarrow\mathbb{R}^{2n}$. Let $N_{w}\left(  \omega\left(  E\right)
\right)  \mathbf{\subset}\mathbb{R}^{2n}$ stand for the normal space to the
manifold $\omega\left(  E\right)  $ in $\mathbb{R}^{2n}$ at $w\in\omega\left(
E\right)  $. (Since $\omega\left(  E\right)  $ is a manifold of dimension $n$,
the dimension of $N_{w}\left(  \omega\left(  E\right)  \right)  $ is $n$.) We
consider the tubular neighborhood of $\omega\left(  E\right)  $
\begin{equation}
\mathcal{T}:=\left\{  w+z|\;w\in\omega\left(  E\right),\ z\in N_{w}\left(
\omega\left(  E\right)  \right),\ \left\Vert z\right\Vert _{\mathbb{R}^{2n}%
}\leq\delta\left(  w\right)  \right\}  \subset\mathbb{R}^{2n},
\label{eq:tubular-neighb}%
\end{equation}
where $\delta:\omega\left(  E\right)  \rightarrow\mathbb{R}_{+}$ is
continuous. Let us abuse the notation slightly by introducing a number
$\delta\in\mathbb{R}$ defined as
\begin{equation}
\delta:=\min_{w\in\omega\left(  D\right)  }\delta\left(  w\right)  .
\label{eq:delta-def}%
\end{equation}
Since $D$ is compact $\delta>0$ is well defined.

\begin{notation}
\label{notation2}For $v\in E$ we shall write a pair $\left(  v,z\right)  $ to
represent a point $\omega\left(  v\right)  +z\in\mathcal{T}$. In this
convention writing the pair $\left(  v,z\right)  $ implies that $z\in
N_{\omega\left(  v\right)  }\left(  \omega\left(  E\right)  \right)  .$ In the
same way by writing $\left(  \theta;x,y,z\right)  $ we mean the point
$\omega\left(  \theta;x,y\right)  +z\in\mathcal{T}$, and imply that $z\in
N_{\omega\left(  \theta;x,y\right)  }\left(  \omega\left(  E\right)  \right)
.$
\end{notation}

\begin{figure}[ptb]
\begin{center}
\includegraphics[width=10cm]{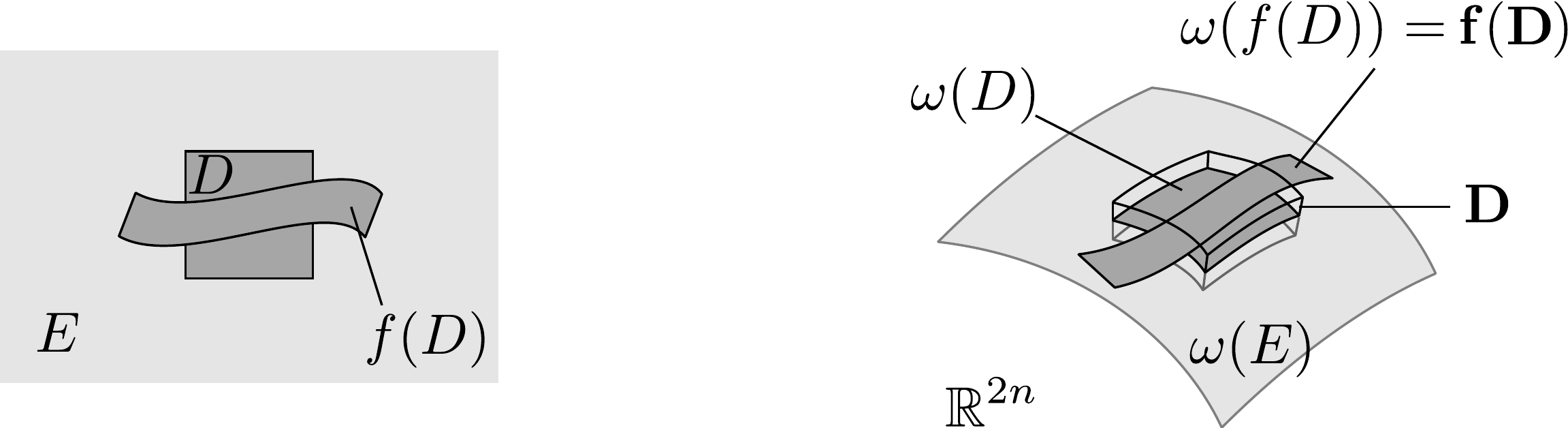}
\end{center}
\caption{Depiction of our embedding. The set $\mathbf{D}$ is a tubular
neighborhood of $\omega(D)$. The map $\mathbf{f}$ defined on $\mathbf{D}$
contracts in the normal direction onto $\omega(f(D))$. This means that the
normal direction is treated as an additional contracting coordinate. }%
\label{fig:embedding}%
\end{figure}

Using Notation \ref{notation2} we define the following subsets of
$\mathbb{R}^{2n}$
\begin{equation}
\mathbf{D}:=\left\{  \left(  v,z\right)  \in\mathcal{T}|\;v\in D,\ \left\Vert
z\right\Vert _{\mathbb{R}^{2n}}\leq\delta\right\}  , \label{eq:D-def}
\end{equation}
\[
\mathbf{D}_{\theta}:=\mathbf{D}\cap\left\{  \left(  v,z\right)  |\;\pi\left(
v\right)  =\theta\right\}  ,\qquad\mathbf{D}_{U}:=\bigcup_{\lambda\in
U}\mathbf{D}_{\lambda}.
\]
We define a map
\[
\mathbf{f}:\mathbf{D}\rightarrow\mathbb{R}^{2n},
\]
as
\begin{equation}
\mathbf{f}\left(  v,z\right)  :=\left(  f\left(  v\right)  ,0\right)  , \label{eq:bold-f-def}
\end{equation}
where the zero on the right hand side is on the $z$-coordinate. (In other words, $\mathbf{f}(
\omega\left(  v\right)  +z)  =\omega\left(  f\left(  v\right)  \right)
$; see Figure \ref{fig:embedding}.) If we need to include more detail, using
also the convention of Notation \ref{notation2}, we can write
\[
\mathbf{f}\left(  \theta;x,y,z\right)  =\left(  f\left(  \theta;x,y\right)
,0\right)  ,
\]
where the zero on the right hand side is on the $z$-coordinate. Observe that directly from the
definition of $\mathbf{f}$ we have the following:

\begin{lemma}
\label{lem:forward-traj-emb}\label{lem:emb-the-same}If for $\left(
v_{1},z_{1}\right)  ,\left(  v_{2},z_{2}\right)  \in\mathcal{T}$ we have
$\left(  v_{2},z_{2}\right)  =\mathbf{f}\left(  v_{1},z_{1}\right)  $, then
$v_{2}=f\left(  v_{1}\right)  $.
\end{lemma}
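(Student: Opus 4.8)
The plan is to unwind the definitions and invoke the uniqueness of the tubular-neighborhood representation. Recall from Notation \ref{notation2} that writing a point of $\mathcal{T}$ as a pair $(v,z)$ means the point $\omega(v)+z$ with $z\in N_{\omega(v)}(\omega(E))$, and that such a decomposition of a point of $\mathcal{T}$ into a base point on $\omega(E)$ plus a normal vector of length at most $\delta(w)$ is \emph{unique} — this is the defining property of a tubular neighborhood (cf.\ \eqref{eq:tubular-neighb}), which is exactly why $\delta$ in \eqref{eq:delta-def} is taken small enough.

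First I would spell out the hypothesis: $(v_2,z_2)=\mathbf{f}(v_1,z_1)$ means, by \eqref{eq:bold-f-def}, that $\omega(v_2)+z_2 = \omega(f(v_1))+0$. On the left $\omega(v_2)\in\omega(E)$ and $z_2\in N_{\omega(v_2)}(\omega(E))$; on the right $\omega(f(v_1))\in\omega(E)$ and $0\in N_{\omega(f(v_1))}(\omega(E))$. Both pairs represent the same point of $\mathcal{T}$, so uniqueness of the tubular-neighborhood decomposition forces $\omega(v_2)=\omega(f(v_1))$ (and incidentally $z_2=0$). Since $\omega$ is an embedding, hence injective, this yields $v_2=f(v_1)$, which is the claim.

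There is no real obstacle here: the lemma is an immediate consequence of how $\mathbf{f}$ was constructed and of the structure of the tubular neighborhood. The only point worth stating explicitly is the uniqueness of the normal decomposition, which is precisely what the choice of $\delta$ in \eqref{eq:delta-def} guarantees.
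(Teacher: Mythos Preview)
Your proof is correct and is essentially identical to the paper's own argument: both unfold the definition of $\mathbf{f}$ to get $\omega(v_2)+z_2=\omega(f(v_1))$, invoke uniqueness of the tubular-neighborhood decomposition to force $z_2=0$ and $\omega(v_2)=\omega(f(v_1))$, and then use injectivity of the embedding $\omega$ to conclude.
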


\begin{proof}
By definition, $\left(  v_{2},z_{2}\right)  =\mathbf{f}\left(  v_{1}%
,z_{1}\right)  $ implies $\omega\left(  f\left(  v_{1}\right)  \right)
=\omega\left(  v_{2}\right)  +z_{2}$. Since $\mathcal{T}$ is a tubular
neighborhood of $\omega\left(  E\right)  $, each point in $\mathcal{T}$ is
represented in a unique way as $\omega\left(  v\right)  +z$. This means that
$\omega\left(  f\left(  v_{1}\right)  \right)  =\omega\left(  v_{2}\right)
+z_{2}$ implies that $z_{2}=0$ and $\omega\left(  f\left(  v_{1}\right)
\right)  =\omega\left(  v_{2}\right)  ,$ which in turn gives $f\left(
v_{1}\right)  =v_{2}$, as required.
\end{proof}

\begin{remark}
\label{rem:why-embedding}When we looking for two points $v_{1},v_{2}\in D$
such that $v_{2}=f(v_{1})$, by Lemma \ref{lem:emb-the-same} we can 
solve
\begin{equation}
\mathbf{f}\left(  v_{1},z_{1}\right)  -\left(  v_{2},z_{2}\right)  =0,
\label{eq:zero-reason}%
\end{equation}
for $\left(  v_{1},z_{1}\right)  ,\left(  v_{2},z_{2}\right)  \in\mathbf{D}$.
(In (\ref{eq:zero-reason}) we are subtracting two vectors in $\mathbb{R}^{2n}%
$.) 
\end{remark}

\section{Proof of Theorem \ref{th:forward-invariance}%
\label{proof:forward-invariance}}

\begin{proof}
Let us fix $\theta=\Theta_{0}\in\Lambda$. Our objective will be to find a
trajectory starting from $D_{\Theta_{0}}$, which remains in $D$ for all
forward iterates. We start by finding trajectories of length $k$.

Let $0^{2n}$ denote zero in $\mathbb{R}^{2n}.$ For fixed $k\in\mathbb{N}$ we
consider the following sets (recall that $\mathbf{D}$ was defined in (\ref{eq:D-def}))
\begin{eqnarray}
\overline{X} &  :=D_{\Theta_{0}}^{u}\times\underset{k}{\underbrace
{\mathbf{D}\times\ldots\times\mathbf{D}}},\label{eq:X-forward-def}\\
Y &  :=\underset{k}{\underbrace{\mathbb{R}^{2n}\times\ldots\times
\mathbb{R}^{2n}}}\times E,\label{eq:Y-forward-def}\\
Z &  :=\underset{k}{\underbrace{\{0^{2n}\}\times\ldots\times\{0^{2n}\}}}\times\left\{
\left(  \theta;0,y\right)  |\;\theta\in\Lambda,\ \left\Vert y\right\Vert
_{s}<1\right\}  .\label{eq:Z-forward-def}%
\end{eqnarray}
We consider $\overline{X}$ as a subset of $E_{\Theta_{0}}^{u}\times
\mathcal{T}\times\ldots\times\mathcal{T}$, so%
\[
X=\mathrm{int}\overline{X}=\mathrm{int}D_{\Theta_{0}}^{u}\times\underset
{k}{\underbrace{\mathrm{int}\mathbf{D}\times\ldots\times\mathrm{int}%
\mathbf{D}}}.
\]
Note that since $\Lambda$ is compact so is $\overline{X}$. $Y$ is a manifold
without boundary and $Z$ is its submanifold, with%
\begin{eqnarray}
\overline{Z} &  =\{0^{2n}\}\times\ldots\times\{0^{2n}\}\times\left\{  \left(
\theta;0,y\right)  |\;\theta\in\Lambda,\ \left\Vert y\right\Vert _{s}%
\leq1\right\}  ,\label{eq:Z-1}\\
\partial Z &  =\{0^{2n}\}\times\ldots\times\{0^{2n}\}\times\left\{  \left(
\theta;0,y\right)  |\;\theta\in\Lambda,\ \left\Vert y\right\Vert _{s}=1\right\}
.\label{eq:Z-2}%
\end{eqnarray}

The manifolds $X$ and $Z$ are of complementary dimension with respect to $Y$:%
\[
\dim X=u+2kn,\qquad\dim Z=c+s,\qquad\dim Y=2kn+n.
\]

To show the existence of an orbit of length $k$ in $D$ we consider a map
\[
F=\left(  F_{1},\ldots,F_{k},F_{k+1}\right)  :\overline{X}\rightarrow Y,
\]
which is defined as follows. For
\begin{equation}
\mathbf{x}=\left(  \left(  \Theta_{0};x_{0}\right)  ,\left(  \theta_{1}%
;x_{1},y_{1},z_{1}\right)  ,\ldots,\left(  \theta_{k};x_{k},y_{k}%
,z_{k}\right)  \right)  \in\overline{X} \label{eq:x-form}%
\end{equation}
we define\label{foot:th1}\footnote{To obtain the generalization stated in Remark \ref{rem:sequences} here we should use $\mathbf{f}_i$ in the definition of $F_i(\mathbf{x})$ for $i=1,\ldots,k$ and $f_{k+1}$ in the definition of $F_{k+1}(\mathbf{x})$; throughout the reminder of the proof we would use homotopies resulting  from the coverings $D_{\theta}\overset{f_i}{\Longrightarrow}D$ in the respective places that follow.}
 (recall that $\mathbf{f}$ was defined in (\ref{eq:bold-f-def}))
\begin{eqnarray}
F_{1}\left(  \mathbf{x}\right)   &  :=\mathbf{f}\left(  \Theta_{0}%
;x_{0},0,0\right)  -\left(  \theta_{1};x_{1},y_{1},z_{1}\right)  ,\nonumber\\
F_{2}\left(  \mathbf{x}\right)   &  :=\mathbf{f}\left(  \theta_{1};x_{1}%
,y_{1},z_{1}\right)  -\left(  \theta_{2};x_{2},y_{2},z_{2}\right)
,\nonumber\\
&\;\;\;\vdots\label{eq:Fk-def}\\
F_{k}\left(  \mathbf{x}\right)   &  :=\mathbf{f}\left(  \theta_{k-1}%
;x_{k-1},y_{k-1},z_{k-1}\right)  -\left(  \theta_{k};x_{k},y_{k},z_{k}\right)
,\nonumber\\
F_{k+1}\left(  \mathbf{x}\right)   &  :=f\left(  \theta_{k};x_{k},y_{k}\right).
\nonumber
\end{eqnarray}
Our objective will be to prove that there exists an $\mathbf{x\in}\overline
{X},$ such that
\begin{equation}
F\left(  \mathbf{x}\right)  \in Z. \label{eq:Fx_in_Z}%
\end{equation}
Observe that by Lemma \ref{lem:forward-traj-emb}, (\ref{eq:Fx_in_Z})
establishes the existence of a trajectory of $f:D\rightarrow E,$ that starts
in $v=\left(  \Theta_{0};x_{0},0\right)  $ and remains in $D$ for $k$ iterates
of $f$:%
\[
f^{i}\left(  v\right)  \in D\qquad\text{for }i=1,\ldots,k.
\]

Our plan is to establish (\ref{eq:Fx_in_Z}) by showing that the intersection
number $I_{2}\left(  F,X,Z\right)  =1$; then (\ref{eq:Fx_in_Z}) will follow
from the intersection property.

The first thing to show is that $F$ is admissible (in the sense of Definition
\ref{def:admissible}). We shall consider an $\mathbf{x}$ of the form
(\ref{eq:x-form}) which lies in the boundary $\partial X$ and show that
$F\left(  \mathbf{x}\right)  \notin\overline{Z}$. There are several
possibilities how $\mathbf{x}$ can lie on $\partial X$, which we will consider one
by one below. In the following argument we make use of the fact that $f\left(
\theta;x,y\right)  =h_{\theta}\left(  0,\left(  \theta;x,y\right)  \right)  $,
which means that all properties of $h_{\theta}$ from Definition
\ref{def:covering-theta} hold for $f$.

The first possibility how $\mathbf{x}$ can lie on $\partial X$ is that
$\left\Vert x_{0}\right\Vert _{u}=1$. In this case, $\left(  \Theta_{0}%
;x_{0},0\right)  \in D_{\Theta_{0}}^{-}$, and by the first condition from
Definition \ref{def:covering-theta} we know that $f\left(  \Theta_{0}%
;x_{0},0\right)  \cap D=\emptyset$, so $\mathbf{f}\left(  \Theta_{0}%
;x_{0},0,0\right)  =\left(  f\left(  \Theta_{0};x_{0},0\right)  ,0\right)
\notin\mathbf{D}$, hence $F_{1}\left(  \mathbf{x}\right)  \neq0$, so in turn
$F\left(  \mathbf{x}\right)  \notin\overline{Z}$.

The second way that $\mathbf{x}$ can lie on $\partial X$ is that $\left\Vert
x_{i}\right\Vert _{u}=1$ for some $i\in\left\{  1,\ldots,k-1\right\}  $. Then
$\left(  \theta_{i};x_{i},y_{i}\right)  \in D_{\theta_{i}}^{-}$, and also from
condition one of Definition \ref{def:covering-theta} we have $f\left(
\theta_{i};x_{i},y_{i}\right)  \notin D$ so $\mathbf{f}\left(  \theta
_{i};x_{i},y_{i},z_{i}\right)  =\left(  f\left(  \theta_{i};x_{i}%
,y_{i}\right)  ,0\right)  \notin\mathbf{D}$. This means that $F_{i+1}\left(
\mathbf{x}\right)  \neq0$, hence $F\left(  \mathbf{x}\right)  \notin
\overline{Z}$.

If $\mathbf{x}$ lies in $\partial X$ because $\left\Vert x_{k}\right\Vert
_{u}=1$, then $\left(  \theta_{k};x_{k},y_{k}\right)  \in D_{\theta_{i}}^{-}$
and from Definition \ref{def:covering-theta} we see that $F_{k+1}\left(
\mathbf{x}\right)  =f\left(  \theta_{k};x_{k},y_{k}\right)  \notin D,$ hence
$F\left(  \mathbf{x}\right)  \notin\overline{Z}$.

Another possibility for $\mathbf{x}$ to be in $\partial X$ is to have
$\left\Vert y_{i}\right\Vert _{s}=1$ for some $i\in\left\{  1,\ldots
,k\right\}  $. From Definition \ref{def:covering-theta} it follows that
$f\left(  D\right)  \cap D^{+}=\emptyset$. We see that since $\left\Vert
y_{i}\right\Vert _{s}=1$ we have $F_{i}\left(  \mathbf{x}\right)  =\left(
f\left(  \theta_{i-1};x_{i-1},y_{i-1}\right)  ,0\right)  -\left(  \theta
_{i};x_{i},y_{i},z_{i}\right)  \neq0$ (where $y_{0} = 0$), so $F\left(
\mathbf{x}\right)  \notin\overline{Z}$.

The last possibility for $\mathbf{x}$ to be on $\partial X$ is that $\left\Vert z_{i}\right\Vert _{\mathbb{R}^{2n}%
}=\delta$ for some $i\in\left\{  1,\ldots,k\right\}  $, then $F_{i}\left(
\mathbf{x}\right)  =\left(  f\left(  \theta_{i-1};x_{i-1},y_{i-1}\right)
,0\right)  -\left(  \theta_{i};x_{i},y_{i},z_{i}\right)  \neq0$, so $F\left(
\mathbf{x}\right)  \notin\overline{Z}$.

Above we have shown that
\begin{equation}
F\left(  \partial X\right)  \cap\overline{Z}=\emptyset. \label{eq:F-admisible}%
\end{equation}
We now need to show that
\begin{equation}
F\left(  \overline{X}\right)  \cap\partial Z=\emptyset.
\label{eq:F-admisible-2}%
\end{equation}
If $\mathbf{y}\in\partial Z$, then
\begin{equation}
\mathbf{y}=\left(  0,\ldots,0,\left(  \theta;0,y\right)  \right)
\quad\text{for some }\theta\in\Lambda, \text{ and }\;\left\Vert y\right\Vert _{s}=1.
\label{eq:y-from-boundary}%
\end{equation}
Since $F_{k+1}\left(  \mathbf{x}\right)  :=f\left(  \theta_{k};x_{k}%
,y_{k}\right)  $ and from Definition \ref{def:covering-theta} it follows that
$f\left(  D\right)  \cap D^{+}=\emptyset$, we see that $F_{k+1}\left(
\mathbf{x}\right)  \neq\mathbf{y.}$ We have shown (\ref{eq:F-admisible-2}),
thus $F$ is admissible.

Our objective will now be to construct an admissible (in the sense of
Definition \ref{def:admissible-H}) homotopy from $F$ to some map that is
transversal to $Z$. We will do this in a number of steps, by constructing
several admissible homotopies and then gluing them together. A less patient
reader might want to take a peek at (\ref{eq:objective-of-homotopies}), where
we write out the map we make the homotopy to. Looking at
(\ref{eq:objective-of-homotopies}) will give an idea of our final objective.

Our first homotopy will be denoted as
\[
H^{\left(  1\right)  }=\left(  H_{1}^{\left(  1\right)  },\ldots
,H_{k}^{\left(  1\right)  },H_{k+1}^{\left(  1\right)  }\right)  :\left[
0,1\right]  \times\overline{X}\rightarrow Y.
\]
Since $D_{\Theta_{0}}\overset{f}{\Longrightarrow}D$, we can take the homotopy
$h_{\Theta_{0}}$ from Definition \ref{def:covering-theta}, and for
$\mathbf{x}$ of the form (\ref{eq:x-form}) we can define%
\begin{eqnarray*}
H_{1}^{\left(  1\right)  }\left(  \alpha,\mathbf{x}\right)   &  :=\left(
h_{\Theta_{0}}\left(  \alpha,\left(  \Theta_{0};x_{0},0\right)  \right)
,0\right)  -\left(  \theta_{1};x_{1},y_{1},z_{1}\right)  ,\\
H_{i}^{\left(  1\right)  }\left(  \alpha,\mathbf{x}\right)   &  :=F_{i}\left(
\mathbf{x}\right)  \qquad\text{for }i\neq1.
\end{eqnarray*}
Our homotopy is such that
\[
H^{\left(  1\right)  }\left(  0,\mathbf{x}\right)  =F\left(  \mathbf{x}%
\right)  ,
\]
and for some $\Theta_{1}\in\Lambda$ and linear $A_{0}:E_{\Theta_{0}}%
^{u}\rightarrow E_{\Theta_{1}}^{u}$ ($\Theta_{1}$ and $A_{0}$ follow from
Definition \ref{def:covering-theta})%
\[
H_{1}^{\left(  1\right)  }\left(  1,\mathbf{x}\right)  =\left(  \Theta
_{1};A_{0}x_{0},0,0\right)  -\left(  \theta_{1};x_{1},y_{1},z_{1}\right)  .
\]

We need to show that $H^{\left(  1\right)  }$ is admissible. This will follow
from an analogous argument to the one used to prove (\ref{eq:F-admisible}--\ref{eq:F-admisible-2}). We
first need to show that
\begin{equation}
H^{\left(  1\right)  }\left(  \alpha,\mathbf{x}\right)  \cap\overline
{Z}=\emptyset\quad\text{for }\mathbf{x}\in\partial X\text{ and }\alpha
\in\left[  0,1\right]  . \label{eq:H0-admissible}%
\end{equation}
We have already established (\ref{eq:F-admisible}) and we know that for
$i\neq1,$ by definition, $H_{i}^{\left(  1\right)  }\left(  \alpha
,\mathbf{x}\right)  =F_{i}\left(  \mathbf{x}\right)  $. This means that to
check (\ref{eq:H0-admissible}) it is enough to consider three cases. The first
is that $\mathbf{x}\in\partial X$ is such that $\left\Vert x_{0}\right\Vert
_{u}=1$. The second case $\left\Vert y_{1}\right\Vert _{s}=1$. The third is
$\left\Vert z_{1}\right\Vert _{\mathbb{R}^{2n}}=\delta$. (For all other
$\mathbf{x}\in\partial X$ condition (\ref{eq:H0-admissible}) follows from
(\ref{eq:F-admisible}).) In the first case $\left(  \Theta_{0};x_{0},0\right)
\in D_{\Theta_{0}}^{-}$ so since $h_{\Theta_{0}}\left(  \alpha,D_{\Theta_{0}%
}^{-}\right)  \cap D=\emptyset$ we obtain%
\begin{equation}
H_{1}^{\left(  1\right)  }\left(  \alpha,\mathbf{x}\right)  =\left(
h_{\Theta_{0}}\left(  \alpha,\left(  \Theta_{0};x_{0},0\right)  \right)
,0\right)  -\left(  \theta_{1};x_{1},y_{1},z_{1}\right)  \neq0,
\label{eq:H1-not-zero}%
\end{equation}
hence $H^{\left(  1\right)  }\left(  \alpha,\mathbf{x}\right)  \notin
\overline{Z}$. For the second case, since $h_{\Theta_{0}}\left(  \alpha
,D_{\Theta_{0}}\right)  \cap D^{+}=\emptyset$ also ensures
(\ref{eq:H1-not-zero}), we have $H^{\left(  1\right)  }\left(  \alpha
,\mathbf{x}\right)  \notin\overline{Z}$. If $\left\Vert z_{1}\right\Vert
=\delta$ then we also see that (\ref{eq:H1-not-zero}) holds. We have thus
established (\ref{eq:H0-admissible}). The fact that%
\begin{equation}
H^{\left(  1\right)  }\left(  \left[  0,1\right]  \times \overline{X}\right)
\cap\partial Z=\emptyset\label{eq:H0-admissible-2}%
\end{equation}
follows from (\ref{eq:F-admisible-2}). (This is because $H^{(1)}_{k+1}=F_{k+1}$, and $F_{k+1}$ was used to establish  (\ref{eq:F-admisible-2}).)  This means that we have established that
$H^{\left(  1\right)  }$ is admissible.

Since $H^{\left(  1\right)  }$ is admissible and $H^{\left(  1\right)
}\left(  0,\cdot\right)  =F$, from the homotopy property of the intersection
number we obtain%
\begin{equation}
I_{2}\left(  F,X,Z\right)  =I_{2}\left(  H^{\left(  1\right)  }\left(
0,\cdot\right)  ,X,Z\right)  =I_{2}\left(  H^{\left(  1\right)  }\left(
1,\cdot\right)  ,X,Z\right)  . \label{eq:first-I2-link}%
\end{equation}

Before specifying the next homotopy we shall make use of the excision
property. For this we take a closed set $U_{\Theta_{1}}\subset\Lambda$ such
that $\mathrm{int}U_{\Theta_{1}}\neq\emptyset$ and $\Theta_{1}\in
\mathrm{int}U_{\Theta_{1}}$. We can take $U_{\Theta_{1}}$ small enough so that
it is in the domain of some trivialization of $E$ and so that it is
contractible to the point $\Theta_{1}$. Let us denote such a continuous
contraction by $g_{\Theta_{1}}:\left[  0,1\right]  \times U_{\Theta_{1}%
}\rightarrow U_{\Theta_{1}}$ for which $g_{\Theta_{1}}\left(  0,\theta\right)
=\theta$ and $g_{\Theta_{1}}\left(  1,\theta\right)  =\Theta_{1}$. We now
define a set $\overline{X^{\left(  1\right)  }}\subset\overline{X}$ as
\[
\overline{X^{\left(  1\right)  }}=D_{\Theta_{0}}^{u}\times\mathbf{D}%
_{U_{\Theta_{1}}}\times\underset{k-1}{\underbrace{\mathbf{D}\times\ldots
\times\mathbf{D}}}.
\]
We see that%
\[
X^{\left(  1\right)  }=\mathrm{int}\overline{X^{\left(  1\right)  }%
}=\mathrm{int}D_{\Theta_{0}}^{u}\times\mathrm{int}\mathbf{D}_{U_{\Theta_{1}}%
}\times\underset{k-1}{\underbrace{\mathrm{int}\mathbf{D}\times\ldots
\times\mathrm{int}\mathbf{D}}}.
\]
We will use the excision property to restrict $H^{\left(  1\right)  }\left(
1,\cdot\right)  $ from $X$ to $X^{\left(  1\right)  }$. For this we first need
to show that
\begin{equation}
H^{\left(  1\right)  }\left(  1,X\right)  \cap Z=H^{\left(  1\right)  }\left(
1,X^{\left(  1\right)  }\right)  \cap Z. \label{eq:excision-ok}%
\end{equation}
If we take some $\mathbf{x\in}X\setminus X^{\left(  1\right)  }$ of the form
(\ref{eq:x-form}), then $\theta_{1}\notin\mathrm{int}U_{\Theta_{1}}$, so in
particular $\theta_{1}\neq\Theta_{1}$. This means that
\[
H_{1}^{\left(  1\right)  }\left(  1,\mathbf{x}\right)  =\left(  \Theta
_{1};A_{0}x_{0},0,0\right)  -\left(  \theta_{1};x_{1},y_{1},z_{1}\right)
\neq0,
\]
which implies (\ref{eq:excision-ok}). To use the excision property we also need to check that
\begin{equation}
H^{\left(  1\right)  }\left(  1,\partial X^{\left(  1\right)  }\right)
\cap\overline{Z}=\emptyset. \label{eq:excision-ok-2}%
\end{equation}
If $\mathbf{x\in}\partial X^{\left(  1\right)  }\cap\partial X$, then
(\ref{eq:excision-ok-2}) follows from (\ref{eq:H0-admissible}). If
$\mathbf{x\in}\partial X^{\left(  1\right)  }\setminus\partial X$, then
$\theta_{1}\in\partial U_{\Theta_{1}}$ and $\theta_{1}\neq\Theta_{1}$ so
$H_{1}^{\left(  1\right)  }\left(  1,\mathbf{x}\right)  \neq0$, which implies
(\ref{eq:excision-ok-2}). We can now apply the excision property.

From the excision property it follows that%
\begin{equation}
I_{2}\left(  H^{\left(  1\right)  }\left(  1,\cdot\right)  ,X,Z\right)
=I_{2}\left(  H^{\left(  1\right)  }\left(  1,\cdot\right)  ,X^{\left(
1\right)  },Z\right)  . \label{eq:first-excision}%
\end{equation}

We are ready to define our second homotopy. We consider
\[
G^{\left(  1\right)  }=\left(  G_{1}^{\left(  1\right)  },\ldots
,G_{k}^{\left(  1\right)  },G_{k+1}^{\left(  1\right)  }\right)  :\left[
0,1\right]  \times\overline{X^{\left(  1\right)  }}\rightarrow Y,
\]
defined as%
\begin{eqnarray}
G_{2}^{\left(  1\right)  }\left(  \alpha,\mathbf{x}\right)   &  :=\mathbf{f}%
\left(  g_{\Theta_{1}}\left(  \alpha,\theta_{1}\right)  ;x_{1},y_{1}%
,z_{1}\right)  -\left(  \theta_{2};x_{2},y_{2},z_{2}\right)  ,\nonumber\\
G_{i}^{\left(  1\right)  }\left(  \alpha,\mathbf{x}\right)   &  :=H_{i}%
^{\left(  1\right)  }\left(  1,\mathbf{x}\right)  \qquad\text{for }i\neq2.
\label{eq:G1-H0-same-coord}%
\end{eqnarray}
To show that $G^{\left(  1\right)  }$ is an admissible homotopy we first need
that $G^{\left(  1\right)  }\left(  \left[  0,1\right]  \times \partial X^{\left(
1\right)  }\right)  \cap\overline{Z}=\emptyset$. It is enough to show that for
$\mathbf{x}$ with $\left(  \theta_{1};x_{1},y_{1},z_{1}\right)  \in
\partial\mathbf{D}_{U_{\Theta_{1}}}$ we have $G^{\left(  1\right)  }\left(
\alpha,\mathbf{x}\right)  \notin Z$. (We do not need to consider other
$\mathbf{x\in\partial}X^{\left(  1\right)  }$ since we have
(\ref{eq:G1-H0-same-coord}) and (\ref{eq:H0-admissible}).) If $\left(
\theta_{1};x_{1},y_{1},z_{1}\right)  \in\partial\mathbf{D}_{U_{\Theta_{1}}}$
then we have three possibilities which we consider below.

The first possibility is that $\left\Vert x_{1}\right\Vert _{u}=1$, so
$\left(  g_{\Theta_{1}}\left(  \alpha,\theta_{1}\right)  ;x_{1},y_{1}\right)
\in D_{g_{\Theta_{1}}\left(  \alpha,\theta_{1}\right)  }^{-}.$ Then, since we
have $D_{g_{\Theta_{1}}\left(  \alpha,\theta_{1}\right)  }\overset{f}%
{\Longrightarrow}D$, we see that $f\left(  D_{g_{\Theta_{1}}\left(  \alpha,\theta
_{1}\right)  }^{-}\right)  \cap D=\emptyset$, therefore
\[
G_{2}^{\left(  1\right)  }\left(  \alpha,\mathbf{x}\right)  =\left(  f\left(
g_{\Theta_{1}}\left(  \alpha,\theta_{1}\right)  ;x_{1},y_{1}\right)
,0\right)  -\left(  \theta_{2};x_{2},y_{2},z_{2}\right)  \neq0,
\]
which implies that $G^{\left(  1\right)  }\left(  \alpha,\mathbf{x}\right)
\notin\overline{Z}$.

The second possibility is that $\left\Vert y_{1}\right\Vert _{s}=1$ or
$\left\Vert z_{1}\right\Vert _{\mathbb{R}^{2n}}=\delta$. Then $\mathbf{x\in
}\partial X$ and by (\ref{eq:H0-admissible}) we obtain that%
\[
G_{1}^{\left(  1\right)  }\left(  \alpha,\mathbf{x}\right)  =H_{1}^{\left(
1\right)  }\left(  1,\mathbf{x}\right)  \notin\overline{Z}.
\]

The third and last possibility is that $\theta_{1}\in\partial U_{\Theta_{1}}$,
but then $\theta_{1}\neq\Theta_{1}$, so
\[
G_{1}^{\left(  1\right)  }\left(  \alpha,\mathbf{x}\right)  =H_{1}^{\left(
0\right)  }\left(  1,\mathbf{x}\right)  =\left(  \Theta_{1};A_{0}%
x_{0},0,0\right)  -\left(  \theta_{1};x_{1},y_{1},z_{1}\right)  \neq0,
\]
hence $G^{\left(  1\right)  }\left(  \alpha,\mathbf{x}\right)  \notin
\overline{Z}$.

We also need to show that $G^{\left(  1\right)  }\left(  \left[  0,1\right]
\times \overline{X^{\left(  1\right)  }}\right)  \cap\partial Z=\emptyset$. This
follows from (\ref{eq:F-admisible-2}) since $G_{k+1}^{\left(  1\right)
}=F_{k+1}$. We have thus shown that $G^{\left(  1\right)  }$ is an admissible
homotopy, so from the homotopy property we obtain that%
\begin{eqnarray}
I_{2}\left(  H^{\left(  0\right)  }\left(  1,\cdot\right)  ,X^{\left(
1\right)  },Z\right)  &=I_{2}\left(  G^{\left(  1\right)  }\left(
0,\cdot\right)  ,X^{\left(  1\right)  },Z\right) \nonumber \\ &=I_{2}\left(  G^{\left(
1\right)  }\left(  1,\cdot\right)  ,X^{\left(  1\right)  },Z\right)  .
\label{eq:second-I2-link}%
\end{eqnarray}
Combining (\ref{eq:second-I2-link}) with (\ref{eq:first-I2-link}) and
(\ref{eq:first-excision}) gives%
\begin{equation}
I_{2}\left(  F,X,Z\right)  =I_{2}\left(  G^{\left(  1\right)  }\left(
1,\cdot\right)  ,X^{\left(  1\right)  },Z\right)  . \label{eq:third-I2-link}%
\end{equation}

Observe that
\[
G_{2}^{\left(  1\right)  }\left(  1,\mathbf{x}\right)  =\mathbf{f}\left(
\Theta_{1};x_{1},y_{1},z_{1}\right)  -\left(  \theta_{2};x_{2},y_{2}%
,z_{2}\right)  .
\]
What is important for us is that we have the fixed $\Theta_{1}$ on the right
hand side of the above expression. This means that we can use the homotopy
$h_{\Theta_{1}}$ from $D_{\Theta_{1}}\overset{f}{\Longrightarrow}D$ to define%
\[
H^{\left(  2\right)  }=\left(  H_{1}^{\left(  2\right)  },\ldots
,H_{k}^{\left(  2\right)  },H_{k+1}^{\left(  2\right)  }\right)  :\left[
0,1\right]  \times\overline{X^{\left(  1\right)  }}\rightarrow Y
\]
as%
\begin{eqnarray}
H_{2}^{\left(  2\right)  }\left(  \alpha,\mathbf{x}\right)   &  :=\left(
h_{\Theta_{1}}\left(  \alpha,\left(  \Theta_{1};x_{1},y_{1}\right)  \right)
,0\right)  -\left(  \theta_{2};x_{2},y_{2},z_{2}\right)  , \label{eq:H2-def} \\
H_{i}^{\left(  2\right)  }\left(  \alpha,\mathbf{x}\right)   &  :=G_{i}%
^{\left(  1\right)  }\left(  1,\mathbf{x}\right)  \qquad\text{for }i\neq2. \nonumber
\end{eqnarray}
Showing that $H^{\left(  2\right)  }$ is an admissible homotopy follows from
mirror steps to establishing that $H^{\left(  1\right)  }$ was admissible.
Thus%
\[
I_{2}\left(  G^{\left(  1\right)  }\left(  1,\cdot\right)  ,X^{\left(
1\right)  },Z\right)  =I_{2}\left(  H^{\left(  2\right)  }\left(
0,\cdot\right)  ,X^{\left(  1\right)  },Z\right)  =I_{2}\left(  H^{\left(
2\right)  }\left(  1,\cdot\right)  ,X^{\left(  1\right)  },Z\right)  ,
\]
hence by (\ref{eq:third-I2-link}) we have%
\begin{equation}
I_{2}\left(  F,X,Z\right)  =I_{2}\left(  H^{\left(  2\right)  }\left(
1,\cdot\right)  ,X^{\left(  1\right)  },Z\right)  . \label{eq:4th-I2-link}%
\end{equation}

Observe that
\begin{eqnarray*}
H_{2}^{\left(  2\right)  }\left(  0,\mathbf{x}\right)   &  =G^{\left(
1\right)  }\left(  1,\mathbf{x}\right)  ,\\
H_{2}^{\left(  2\right)  }\left(  1,\mathbf{x}\right)   &  =\left(  \Theta
_{2};A_{1}x_{1},0,0\right)  -\left(  \theta_{2};x_{2},y_{2},z_{2}\right)  ,
\end{eqnarray*}
where $\Theta_{2}$ and $A_{1}:E_{\Theta_{1}}^{u}\rightarrow E_{\Theta_{2}}%
^{u}$ result from the homotopy $h_{\Theta_{1}}$ from Definition \ref{def:covering-theta}. This means that we can take
an excision to
\[
\overline{X^{\left(  2\right)  }}:=D_{\Theta_{0}}^{u}\times\mathbf{D}%
_{U_{\Theta_{1}}}\times\mathbf{D}_{U_{\Theta_{2}}}\times\underset
{k-2}{\underbrace{\mathbf{D}\times\ldots\times\mathbf{D}}},
\]
where $U_{\Theta_{2}}\subset\Lambda$ is a closure of some small enough open
set around $\Theta_{2}$, which is contractible to the point $\Theta_{2}$ via a
homotopy $g_{\Theta_{2}}\left(  \alpha,\theta\right)  $. Using the same
arguments to those that lead to (\ref{eq:first-excision}) we obtain%
\[
I_{2}\left(  H^{\left(  2\right)  }\left(  1,\cdot\right)  ,X^{\left(
1\right)  },Z\right)  =I_{2}\left(  H^{\left(  2\right)  }\left(
1,\cdot\right)  ,X^{\left(  2\right)  },Z\right)  ,
\]
and by (\ref{eq:4th-I2-link})%
\[
I_{2}\left(  F,X,Z\right)  =I_{2}\left(  H^{\left(  2\right)  }\left(
1,\cdot\right)  ,X^{\left(  2\right)  },Z\right)  .
\]

We can now iterate the above construction step by step by taking, for
$j=2,\ldots k,$ the sets
\[
\overline{X^{\left(  j\right)  }}:=D_{\Theta_{0}}^{u}\times\underset
{j}{\underbrace{\mathbf{D}_{U_{\Theta_{1}}}\times\ldots\times\mathbf{D}%
_{U_{\Theta_{j}}}}}\times\underset{k-j}{\underbrace{\mathbf{D}\times
\ldots\times\mathbf{D}}},
\]
and admissible homotopies%
\begin{eqnarray*}
H^{\left(  j\right)  }  &  :\left[  0,1\right]  \times X^{\left(  j-1\right)
}\rightarrow Y,\\
G^{\left(  j\right)  }  &  :\left[  0,1\right]  \times X^{\left(  j\right)
}\rightarrow Y,
\end{eqnarray*}
defined as (compare with (\ref{eq:G1-H0-same-coord}) and (\ref{eq:H2-def}))
\begin{eqnarray*}
H_{j}^{\left(  j\right)  }\left(  \alpha,\mathbf{x}\right)   &  :=\left(
h_{\Theta_{j-1}}\left(  \alpha,\left(  \Theta_{j-1};x_{j-1},y_{j-1}\right)
\right)  ,0\right)  -\left(  \theta_{j};x_{j},y_{j},z_{j}\right)  ,\\
H_{i}^{\left(  j\right)  }\left(  \alpha,\mathbf{x}\right)   &  :=G_{i}%
^{\left(  j-1\right)  }\left(  \alpha,\mathbf{x}\right)  \qquad\text{for
}i\neq j,\\
& \\
G_{j+1}^{\left(  j\right)  }\left(  \alpha,\mathbf{x}\right)   &
:=\mathbf{f}\left(  g_{\Theta_{j}}\left(  \alpha,\theta_{j}\right)
;x_{j},y_{j},z_{j}\right)  -\left(  \theta_{j+1};x_{j+1},y_{j+1}%
,z_{j+1}\right)  ,\\
G_{i}^{\left(  j\right)  }\left(  \alpha,\mathbf{x}\right)   &  :=H_{i}%
^{\left(  j\right)  }\left(  1,\mathbf{x}\right)  \qquad\text{for }i\neq j+1.
\end{eqnarray*}

We sum up what we have achieved so far:%
\begin{eqnarray}
I_{2}\left(  F,X,Z\right)   &  =I_{2}\left(  H^{\left(  1\right)  }\left(
0,\cdot\right)  ,X,Z\right)  =I_{2}\left(  H^{\left(  1\right)  }\left(
1,\cdot\right)  ,X,Z\right) \nonumber\\
&  =I_{2}\left(  H^{\left(  1\right)  }\left(  1,\cdot\right)  ,X^{\left(
1\right)  },Z\right)  \qquad\text{(excision)}\nonumber\\
&  =I_{2}\left(  G^{\left(  1\right)  }\left(  0,\cdot\right)  ,X^{\left(
1\right)  },Z\right)  =I_{2}\left(  G^{\left(  1\right)  }\left(
1,\cdot\right)  ,X^{\left(  1\right)  },Z\right) \nonumber\\
&  =I_{2}\left(  H^{\left(  2\right)  }\left(  0,\cdot\right)  ,X^{\left(
1\right)  },Z\right)  =I_{2}\left(  H^{\left(  2\right)  }\left(
1,\cdot\right)  ,X^{\left(  1\right)  },Z\right) \nonumber\\
&  =I_{2}\left(  H^{\left(  2\right)  }\left(  1,\cdot\right)  ,X^{\left(
2\right)  },Z\right)  \qquad\text{(excision)\thinspace}%
\label{eq:homotopy-steps}\\
&  =I_{2}\left(  G^{\left(  2\right)  }\left(  0,\cdot\right)  ,X^{\left(
2\right)  },Z\right)  =I_{2}\left(  G^{\left(  2\right)  }\left(
1,\cdot\right)  ,X^{\left(  2\right)  },Z\right) \nonumber\\
&  =I_{2}\left(  H^{\left(  3\right)  }\left(  0,\cdot\right)  ,X^{\left(
2\right)  },Z\right)  =I_{2}\left(  H^{\left(  3\right)  }\left(
1,\cdot\right)  ,X^{\left(  2\right)  },Z\right) \nonumber\\
&  \;\;\vdots\nonumber\\
&  =I_{2}\left(  H^{\left(  k\right)  }\left(  1,\cdot\right)  ,X^{\left(
k\right)  },Z\right)  \qquad\text{(excision)}\nonumber\\
&  =I_{2}\left(  G^{\left(  k\right)  }\left(  0,\cdot\right)  ,X^{\left(
k\right)  },Z\right)  =I_{2}\left(  G^{\left(  k\right)  }\left(
1,\cdot\right)  ,X^{\left(  k\right)  },Z\right)  .\nonumber
\end{eqnarray}
We finally consider the last homotopy%
\[
H^{(k+1)}:\left[  0,1\right]  \times\overline{X^{\left(  k\right)  }%
}\rightarrow Y,
\]
defined as%
\begin{eqnarray*}
H_{k+1}^{\left(  k+1\right)  }\left(  \alpha,\mathbf{x}\right)   &
:=h_{\Theta_{k}}\left(  \alpha,\left(  \Theta_{k};x_{k},y_{k}\right)  \right)
,\\
H_{i}^{\left(  k+1\right)  }\left(  \alpha,\mathbf{x}\right)   &
:=G_{i}^{\left(  k\right)  }\left(  1,\mathbf{x}\right)  \qquad\text{for
}i\neq k+1.
\end{eqnarray*}
Showing that $H^{\left(  k+1\right)  }$ is admissible follows from analogous
argument to showing that $H^{(1)}$ is admissible. We therefore have%
\begin{eqnarray}
I_{2}\left(  G^{\left(  k\right)  }\left(  1,\cdot\right)  ,X^{\left(
k\right)  },Z\right)  &=I_{2}\left(  H^{\left(  k+1\right)  }\left(
0,\cdot\right)  ,X^{\left(  k\right)  },Z\right) \nonumber \\ &=I_{2}\left(  H^{\left(
k+1\right)  }\left(  1,\cdot\right)  ,X^{\left(  k\right)  },Z\right)  .
\label{eq:final-hom-link}%
\end{eqnarray}

What is important for us is that at the end of our construction we have
achieved:
\begin{eqnarray}
H_{1}^{\left(  k+1\right)  }\left(  1,\mathbf{x}\right)   &  =\left(
\Theta_{1};A_{0}x_{0},0,0\right)  -\left(  \theta_{1};x_{1},y_{1}%
,z_{1}\right)  ,\nonumber\\
H_{2}^{\left(  k+1\right)  }\left(  1,\mathbf{x}\right)   &  =\left(
\Theta_{2};A_{1}x_{1},0,0\right)  -\left(  \theta_{2};x_{2},y_{2}%
,z_{2}\right)  ,\nonumber\\
&  \;\;\vdots\label{eq:objective-of-homotopies}\\
H_{k}^{\left(  k+1\right)  }\left(  1,\mathbf{x}\right)   &  =\left(
\Theta_{k};A_{k-1}x_{k-1},0,0\right)  -\left(  \theta_{k};x_{k},y_{k}%
,z_{k}\right)  ,\nonumber\\
H_{k+1}^{\left(  k+1\right)  }\left(  1,\mathbf{x}\right)   &  =\left(
\Theta_{k+1};A_{k}x_{k},0\right)  .\nonumber
\end{eqnarray}
Since for $i=0,\ldots,k$, $A_{i}:E_{\Theta_{i}}^{u}\rightarrow E_{\Theta
_{i+1}}^{u}$ are linear and $A_{i}\left(  \partial B_{\Theta_{i}}^{u}\right)
\subset E_{\Theta_{i+1}}^{u}\setminus B_{\Theta_{i+1}}^{u}$, there is a unique
transversal intersection 
of
$H^{\left(  k+1\right)  }\left(  1,X^{(k)}\right)  $ with $Z$ at the point
$H^{\left(  k+1\right)  }\left(  1,\mathbf{x}^{\ast}\right)  $ for%
\[
\mathbf{x}^{\ast}=\left(  \left(  \Theta_{0};0\right)  ,\left(  \Theta
_{1};0,0,0\right)  ,\ldots,\left(  \Theta_{k};0,0,0\right)  \right)  \in
X^{\left(  k\right)  }.
\]
This means that $I_{2}\left(  H^{\left(  k+1\right)  }\left(  1,\cdot\right)
,X^{\left(  k\right)  },Z\right)  =1$, hence by (\ref{eq:homotopy-steps}%
--\ref{eq:final-hom-link})
\[
I_{2}\left(  F,X,Z\right)  =I_{2}\left(  H^{\left(  k+1\right)  }\left(
1,\cdot\right)  ,X^{\left(  k\right)  },Z\right)  =1.
\]
From the intersection property we therefore obtain an $\mathbf{x}\in X$ for
which we have (\ref{eq:Fx_in_Z}).

By establishing (\ref{eq:Fx_in_Z}) we have shown that for any $k\in\mathbb{N}$
there exists a trajectory starting from some $v_{k}\in D_{\Theta_{0}}$ for
which $f^{i}\left(  v_{k}\right)  \in D$ for $i=1,\ldots,k$. Since
$D_{\Theta_{0}}$ is compact, the claim of our theorem now simply follows by
passing to a limit $v^{*}\in D_{\theta}$ of a convergent subsequence of
$\{v_{k}\}_{k\in\mathbb{N}}$. For such a $v^{*}$, by continuity of $f$, we
will have $f^{i}\left(  v^{*}\right)  \in D$ for all $i\in\mathbb{N}$, as required.
\end{proof}

\section{Proof of Theorem \ref{th:invariance}}

The proof is similar to the one from the previous section. The difference is
that we will also \correction{comment 8}{keep track of }what is happening backwards in time while
setting up our maps and homotopies.

\label{proof:invariance}

\begin{proof}
Let us fix $\Theta_{0}\in\Lambda$. We start by showing that for a fixed
$k\in\mathbb{N}$ we have a sequence $\left\{  v_{i}\right\}  _{i=-k,\ldots
,k}\subset D$ such that $v_{0}\in D_{\Theta_{0}}$ and $f\left(  v_{i}\right)
=v_{i+1}$ for $i=-k,\ldots,k-1$. We define the sets%
\begin{eqnarray}
\overline{X}  &  =D^{u}\times\underset{k}{\underbrace{\mathbf{D}\times
\ldots\times\mathbf{D}}}\times\mathbf{D}_{\Theta_{0}}\times\underset
{k}{\underbrace{\mathbf{D}\times\ldots\times\mathbf{D}}} \label{eq:X-full-def}%
\\
Y  &  =\underset{2k+1}{\underbrace{\mathbb{R}^{2n}\times\ldots\times
\mathbb{R}^{2n}}}\times E\label{eq:Y-full-def}\\
Z  &  =\underset{2k+1}{\underbrace{\{0^{2n}\}\times\ldots\times\{0^{2n}\}}%
}\times\left\{  \left(  \theta;0,y\right)  |\;\theta\in\Lambda,\ \left\Vert
y\right\Vert _{s}<1\right\}  . \label{eq:Z-full-def}%
\end{eqnarray}
We see that%
\begin{eqnarray*}
\dim X  &  =\left(  c+u\right)  +2kn+\left(  u+s+n\right)  +2kn=(2k+1)2n+u,\\
\dim Z  &  =c+s,\\
\dim Y  &  =\left(  2k+1\right)  2n+n,
\end{eqnarray*}
therefore $X$ and $Z$ are manifolds of complementary dimensions with respect
to $Y$. $Y$ is a boundaryless manifold and $Z$ is its submanifold with
$\overline{Z}$ and $\partial Z$ of the form (\ref{eq:Z-1}--\ref{eq:Z-2}).

We define
\begin{equation}
F=\left(  F_{-k},\ldots,F_{k},F_{k+1}\right)  :\overline{X}\rightarrow Y
\label{eq:F-from-proof2}%
\end{equation}
as follows. For%
\begin{eqnarray*}
\mathbf{x}  &  =(\left(  \theta_{-k-1};x_{-k-1}\right)  ,\left(  \theta
_{-k};x_{-k},y_{-k},z_{-k}\right)  ,\ldots,\left(  \theta_{-1};x_{-1}%
,y_{-1},z_{-1}\right)  ,\\
&  \qquad\qquad\qquad\qquad\left(  \Theta_{0};x_{0},y_{0}%
,z_{0}\right)  ,\left(  \theta_{1};x_{1},y_{1},z_{1}\right)  ,\ldots,\left(
\theta_{k};x_{k},y_{k},z_{k}\right)  )
\end{eqnarray*}
we define\label{foot:th2}\footnote{To obtain the generalization stated in Remark \ref{rem:sequences} here we should use $\mathbf{f}_i$ in the definition of $F_i(\mathbf{x})$ for $i=-k,\ldots,k$ and $f_{k+1}$ in the definition of $F_{k+1}(\mathbf{x})$; throughout the reminder of the proof we would use homotopies resulting  from the coverings $D\overset{f_i}{\Longrightarrow}D$ in the respective places that follow.}
\begin{eqnarray*}
F_{-k}\left(  \mathbf{x}\right)   &  :=\mathbf{f}\left(  \theta_{-k-1}%
;x_{-k-1},0,0\right)  -\left(  \theta_{-k};x_{-k},y_{-k},z_{-k}\right)  ,\\
& \\
F_{-k+1}\left(  \mathbf{x}\right)   &  :=\mathbf{f}\left(  \theta_{-k}%
;x_{-k},y_{-k},z_{-k}\right)  -\left(  \theta_{-k+1};x_{-k+1},y_{-k+1}%
,z_{-k+1}\right)  ,\\
&  \;\;\;\vdots\\
F_{-1}\left(  \mathbf{x}\right)   &  :=\mathbf{f}\left(  \theta_{-2}%
;x_{-2},y_{-2},z_{-2}\right)  -\left(  \theta_{-1};x_{-1},y_{-1}%
,z_{-1}\right)  ,\\
& \\
F_{0}\left(  \mathbf{x}\right)   &  :=\mathbf{f}\left(  \theta_{-1}%
;x_{-1},y_{-1},z_{-1}\right)  -\left(  \Theta_{0};x_{0},y_{0},z_{0}\right)
,\\
F_{1}\left(  \mathbf{x}\right)   &  :=\mathbf{f}\left(  \Theta_{0};x_{0}%
,y_{0},z_{0}\right)  -\left(  \theta_{1};x_{1},y_{1},z_{1}\right)  ,\\
& \\
F_{2}\left(  \mathbf{x}\right)   &  :=\mathbf{f}\left(  \theta_{1};x_{1}%
,y_{1},z_{1}\right)  -\left(  \theta_{2};x_{2},y_{2},z_{2}\right)  ,\\
&  \;\;\;\vdots\\
F_{k}\left(  \mathbf{x}\right)   &  :=\mathbf{f}\left(  \theta_{k-1}%
;x_{k-1},y_{k-1},z_{k-1}\right)  -\left(  \theta_{k};x_{k},y_{k},z_{k}\right)
,\\
& \\
F_{k+1}\left(  \mathbf{x}\right)   &  :=f\left(  \theta_{k};x_{k}%
,y_{k}\right)  .
\end{eqnarray*}

If we find a point $\mathbf{x} \in\overline{X}$ for which $F\left(
\mathbf{x}\right)  \in Z,$ then, by Lemma \ref{lem:emb-the-same}, we will
obtain a finite trajectory (of length $2k+1$) of $f$ which remains in $D$. The
way in which we have chosen $F_{0}$ and $F_{1}$ has a special role. The
condition that $F_{0}=0$ ensures that the trajectory of $f$ reaches
$D_{\Theta_{0}}$. In $F_{1}$ we also find $\Theta_{0}$; this ensures that the
trajectory that reached $D_{\Theta_{0}}$ (because of $F_{0} =0$) will now
exits $D_{\Theta_{0}}$ in the next iterate.

Our objective is to show that $F\left(  X\right)  \cap Z\neq\emptyset$. We
will show this by proving that $I_{2}\left(  F,X,Z\right)  =1$. For this we
construct a sequence of admissible homotopies to a map for which it is easy to
compute the intersection number directly.

Our first homotopy $H^{\left(  0\right)  }:\left[  0,1\right]  \times
\overline{X}\rightarrow Y$ is defined as%
\begin{eqnarray*}
H_{-k}^{\left(  0\right)  }\left(  \alpha,\mathbf{x}\right)   &  :=\left(
h\left(  \alpha,\left(  \theta_{-k-1};x_{-k-1},0\right)  \right)  ,0\right)
-\left(  \theta_{-k};x_{-k},y_{-k},z_{-k}\right)  ,\\
H_{-k+1}^{\left(  0\right)  }\left(  \alpha,\mathbf{x}\right)   &  :=\left(
h\left(  \alpha,\left(  \theta_{-k};x_{-k},y_{-k}\right)  \right)  ,0\right)
-\left(  \theta_{-k+1};x_{-k+1},y_{-k+1},z_{-k+1}\right)  ,\\
&  \;\;\;\vdots\\
H_{-1}^{\left(  0\right)  }\left(  \alpha,\mathbf{x}\right)   &  :=\left(
h\left(  \alpha,\left(  \theta_{-2};x_{-2},y_{-2}\right)  \right)  ,0\right)
-\left(  \theta_{-1};x_{-1},y_{-1},z_{-1}\right)  ,\\
H_{0}^{\left(  0\right)  }\left(  \alpha,\mathbf{x}\right)   &  :=\left(
h\left(  \alpha,\left(  \theta_{-1};x_{-1},y_{-1}\right)  \right)  ,0\right)
-\left(  \Theta_{0};x_{0},y_{0},z_{0}\right)  ,\\
H_{i}^{\left(  0\right)  }\left(  \alpha,\mathbf{x}\right)   &  :=F_{i}\left(
\mathbf{x}\right)  \qquad\text{for }i>0.
\end{eqnarray*}
The fact that this homotopy is admissible follows from $D\overset{f}{\Longrightarrow
}D$ (by using analogous arguments to those used to show that the homotopies
considered in the proof of Theorem \ref{th:forward-invariance}).

We now take the sequence of admissible homotopies and excisions $H^{\left(
1\right)  }$, $G^{(1)}\ldots H^{\left(  k\right)  }$, $G^{(k)}$, $H^{\left(
k+1\right)  }$ defined as in the proof of Theorem \ref{th:forward-invariance},
leaving the coordinates $-k,\ldots,0$ without any changes. While making the
excisions, we make them to sets of the form%
\[
\overline{X^{\left(  j\right)  }}:=D^{u}\times\underset{k}{\underbrace
{\mathbf{D}\times\ldots\times\mathbf{D}}}\times\mathbf{D}_{\Theta_{0}}%
\times\underset{j}{\underbrace{\mathbf{D}_{U_{\Theta_{1}}}\times\ldots
\times\mathbf{D}_{U_{\Theta_{j}}}}}\times\underset{k-j}{\underbrace
{\mathbf{D}\times\ldots\times\mathbf{D}}},
\]
for $j=1,\ldots,k.$ We thus find an admissible homotopy of $F$ to%
\begin{eqnarray*}
H_{-k}^{\left(  k+1\right)  }\left(  1,\mathbf{x}\right)   &  =(\eta\left(
\theta_{-k-1}\right)  ;A_{\theta_{-k-1}}x_{-k-1},0,0)-\left(  \theta
_{-k};x_{-k},y_{-k},z_{-k}\right)  ,\\
H_{-k+1}^{\left(  k+1\right)  }\left(  1,\mathbf{x}\right)   &  =\left(
\eta\left(  \theta_{-k}\right)  ;A_{\theta_{-k}}x_{-k},0,0\right)  -\left(
\theta_{-k+1};x_{-k+1},y_{-k+1},z_{-k+1}\right)  ,\\
&  \;\;\vdots\\
H_{-1}^{\left(  k+1\right)  }\left(  1,\mathbf{x}\right)   &  =\left(
\eta\left(  \theta_{-2}\right)  ;A_{\theta_{-2}}x_{-2},0,0\right)  -\left(
\theta_{-1};x_{-1},y_{-1},z_{-1}\right)  ,\\
H_{0}^{\left(  k+1\right)  }\left(  1,\mathbf{x}\right)   &  =\left(
\eta\left(  \theta_{-1}\right)  ;A_{\eta_{-1}}x_{-1},0,0\right)  -\left(
\Theta_{0};x_{0},y_{0},z_{0}\right)  ,\\
H_{1}^{\left(  k+1\right)  }\left(  1,\mathbf{x}\right)   &  =\left(
\Theta_{1};A_{0}x_{0},0,0\right)  -\left(  \theta_{1};x_{1},y_{1}%
,z_{1}\right)  ,\\
H_{2}^{\left(  k+1\right)  }\left(  1,\mathbf{x}\right)   &  =\left(
\Theta_{2};A_{1}x_{1},0,0\right)  -\left(  \theta_{2};x_{2},y_{2}%
,z_{2}\right)  ,\\
&  \;\;\vdots\\
H_{k}^{\left(  k+1\right)  }\left(  1,\mathbf{x}\right)   &  =\left(
\Theta_{k};A_{k-1}x_{k-1},0,0\right)  -\left(  \theta_{k};x_{k},y_{k}%
,z_{k}\right)  ,\\
H_{k+1}^{\left(  k+1\right)  }\left(  1,\mathbf{x}\right)   &  =\left(
\Theta_{k+1};A_{k}x_{k},0\right)  .
\end{eqnarray*}

We need to show that $I_{2}\left(  H^{\left(  k+1\right)  }\left(
1,\cdot\right)  ,X^{\left(  k\right)  },Z\right)  =1$.

Since $\deg_{2}\left(  \eta\right)  \neq0$, we have a smooth map $\widetilde{\eta
}_{0}:\Lambda\rightarrow\Lambda$, homotopic to $\eta$, so that $\Theta_{0}$ is
a regular value of $\widetilde{\eta}_{0}$ for which the set $\widetilde{\eta}_{0}%
^{-1}\left(  \Theta_{0}\right)  $ has an odd number of points\footnote{As
highlighted in Remark \ref{rem:generalization}, we could use alternative
assumptions for this part of the argument. It would be enough if the degree
was not zero at each point in $\Lambda$, instead of assuming that the
(global) degree is not zero. Also, in the setting of oriantable manifolds we
could use the Brouwer degree for this part of the argument. Then, instead of the mod $2$ intersection number, we would use the oriented intersection number throughout the proof. \label{footnote}}. For the same reason we have a smooth $\widetilde{\eta}%
_{1}:\Lambda\rightarrow\Lambda$ homotopic to $\eta$, for which each point in
$\widetilde{\eta}_{-1}^{-1}\left(  \widetilde{\eta}_{0}^{-1}\left(  \Theta_{0}\right)
\right)  $ is regular and again the number of points in $\widetilde{\eta}%
_{-1}^{-1}\left(  \widetilde{\eta}_{0}^{-1}\left(  \Theta_{0}\right)  \right)  $
is odd. Proceding inductively we find smooth $\widetilde{\eta}_{-i}$ homotopic and
arbitrarily close to $\eta$, such that the points in $\widetilde{\eta}_{-i}%
^{-1}\circ\ldots\circ\widetilde{\eta}_{0}^{-1}\left(  \Theta_{0}\right)  $ are
regular for $\widetilde{\eta}_{-i}$, and that their number is odd; we find such
maps for $i=0,\ldots,k$. This means that $H^{\left(  k+1\right)  }\left(
1,\cdot\right)  $ is homotopic through an admissible map to $H:\overline
{X^{\left(  k\right)  }}\rightarrow Y$ defined as%
\begin{eqnarray*}
H_{-k}\left(  \mathbf{x}\right)   &  =(\widetilde{\eta}_{-k}\left(  \theta
_{-k-1}\right)  ;A_{\theta_{-k-1}}x_{-k-1},0,0)-\left(  \theta_{-k}%
;x_{-k},y_{-k},z_{-k}\right)  ,\\
H_{-k+1}\left(  \mathbf{x}\right)   &  =\left(  \widetilde{\eta}_{-k+1}\left(
\theta_{-k}\right)  ;A_{\theta_{-k}}x_{-k},0,0\right)  -\left(  \theta
_{-k+1};x_{-k+1},y_{-k+1},z_{-k+1}\right)  ,\\
&  \;\;\vdots\\
H_{-1}\left(  \mathbf{x}\right)   &  =\left(  \widetilde{\eta}_{1}\left(
\theta_{-2}\right)  ;A_{\theta_{-2}}x_{-2},0,0\right)  -\left(  \theta
_{-1};x_{-1},y_{-1},z_{-1}\right)  ,\\
H_{0}\left(  \mathbf{x}\right)   &  =\left(  \widetilde{\eta}_{0}\left(
\theta_{-1}\right)  ;A_{\eta_{-1}}x_{-1},0,0\right)  -\left(  \Theta_{0}%
;x_{0},y_{0},z_{0}\right)  ,\\
H_{i}\left(  \mathbf{x}\right)   &  =H_{i}^{\left(  k+1\right)  }\left(
1,\mathbf{x}\right)  ,\qquad\text{for }i>0.
\end{eqnarray*}
We see that $H$ intersects transversely with $Z$ at $H\left(  \mathbf{x}%
\right)  $ for points of the form%
\begin{eqnarray}
\mathbf{x} &  \mathbf{=(}\left(  \lambda_{-k-1};0\right)  ,\left(
\lambda_{-k};0,0,0\right)  ,\ldots,\left(  \lambda_{-1};0,0,0\right)
,\label{eq:inter-points-final}\\
&  \qquad\left(  \Theta_{0};0,0,0\right)  ,\left(  \Theta_{1};0,0,0\right)
,\ldots\left(  \Theta_{k};0,0,0\right)  ),\nonumber
\end{eqnarray}
where $\lambda_{-k-1}\in\widetilde{\eta}_{-k}^{-1}\circ\ldots\circ\widetilde{\eta}%
_{0}^{-1}\left(  \Theta_{0}\right)  $ and $\lambda_{-i}=\widetilde{\eta}_{-i}%
\circ\ldots\circ\widetilde{\eta}_{-k}\left(  \lambda_{-k-1}\right)  $ for
$i=1,\ldots,k.$ The number of the points of the form
(\ref{eq:inter-points-final}) is equal to $\#\widetilde{\eta}_{-k}^{-1}\circ
\ldots\circ\widetilde{\eta}_{0}^{-1}\left(  \Theta_{0}\right)  $, which is odd,
and so $I_{2}\left(  H,X^{\left(  k\right)  },Z,\right)  =1.$ Since
\[
I_{2}\left(  F,X,Z\right)  =I_{2}\left(  H^{\left(  k+1\right)  }\left(
1,\cdot\right)  ,X^{\left(  k\right)  },Z\right)  =I_{2}\left(  H,X^{\left(
k\right)  },Z\right)  ,
\]
this implies that $I_{2}\left(  F,X,Z\right)  =1.$

Since $I_{2}\left(  F,X,Z\right)  =1$, we have established the existence of a
trajectory $\left\{  v_{i}\right\}  _{i=-k}^{k}$ in $D$, for which $v_{0}\in
D_{\Theta_{0}}$. Because this holds for any $k\in\mathbb{N}$, we obtain a
sequence of such $v_{0}$'s lying in $D_{\Theta_{0}}$ which depend on $k$. Our
claim now follows by passing to a limit of a convergent subsequence, by the
virtue of compactness of $D_{\Theta_{0}}$, to obtain a point $v_{0}^{\ast}\in
D_{\Theta_{0}}$ for which the full trajectory is contained in $D$.
\end{proof}

\section{Proof of Theorem \ref{th:forward-continuation}%
\label{proof:forward-continuation}}

Before we proceed with the proof, we shall need two auxiliary results. The
first is a classical lemma:

\begin{lemma}
\cite[(9.3) p.12]{Whyburn} \label{lem:Whyburn}(Whyburn's lemma) Assume that
$K$ is a compact metric space and $K_{0},$ $K_{1}$ two closed disjoint subsets
of $K$. Then either

\begin{enumerate}
\item there exists a component (maximal closed connected subset) of $K$
meeting $K_{0}$ and $K_{1}$,

\item or there exist two disjoint compact sets $\widehat{K_{0}}$ and
$\widehat{K_{1}}$ such that $K=\widehat{K_{0}}\cup\widehat{K_{1}}$ and
$K_{i}\subset\widehat{K_{i}}$ for $i=1,2$.
\end{enumerate}
\end{lemma}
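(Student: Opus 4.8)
The plan is to reduce the dichotomy to the classical theorem that in a compact Hausdorff space — here the compact metric space $K$ — the connected component of a point equals its \emph{quasi-component}, i.e.\ the intersection of all subsets of $K$ that are simultaneously open and closed in $K$ (the \v{S}ura--Bura lemma). Granting this, the argument is just two nested compactness extractions. I would assume that alternative (1) fails, i.e.\ no connected component of $K$ meets both $K_0$ and $K_1$, and aim to establish alternative (2).

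The key observation is that (2) follows as soon as one produces a set $W\subseteq K$ that is clopen in $K$, contains $K_1$, and is disjoint from $K_0$: then $\widehat{K_1}:=W$ and $\widehat{K_0}:=K\setminus W$ are closed in $K$, hence compact, they are disjoint, $K=\widehat{K_0}\cup\widehat{K_1}$, and $K_i\subseteq\widehat{K_i}$. So the whole task becomes the construction of such a $W$.

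To build $W$, first fix $p\in K_1$ with connected component $C_p$. Since $p\in C_p\cap K_1$ and (1) fails, $C_p\cap K_0=\emptyset$. By the \v{S}ura--Bura lemma $C_p=\bigcap\mathcal{F}_p$, where $\mathcal{F}_p$ is the family of all clopen subsets of $K$ containing $p$; hence $\bigcap\mathcal{F}_p$ is disjoint from the compact set $K_0$, so $\{K\setminus F:F\in\mathcal{F}_p\}$ is an open cover of $K_0$. Passing to a finite subcover and intersecting the corresponding members of $\mathcal{F}_p$ gives a single clopen $U_p$ with $p\in U_p$ and $U_p\cap K_0=\emptyset$. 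Then $\{U_p\}_{p\in K_1}$ is an open cover of the compact set $K_1$; take a finite subcover $U_{p_1},\dots,U_{p_m}$ and set $W:=\bigcup_{j=1}^{m}U_{p_j}$. A finite union of clopen sets is clopen, $K_1\subseteq W$ by construction, and $W\cap K_0=\emptyset$ since each $U_{p_j}$ misses $K_0$ — so $W$ has all the required properties, and alternative (2) holds.

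The one step that is not a routine compactness manipulation is the \v{S}ura--Bura lemma, and that is where the care is needed; if a self-contained proof is wanted, one shows directly that $\bigcap\mathcal{F}_p$ is connected. Were it the disjoint union of nonempty closed (hence compact) sets $A\ni p$ and $B$, normality of $K$ yields disjoint open sets $O\supseteq A$ and $O'\supseteq B$; then $\partial O$ is a compact set disjoint from $\bigcap\mathcal{F}_p=A\cup B$, so finitely many members of $\mathcal{F}_p$ have an intersection $F^{\ast}$ with $F^{\ast}\cap\partial O=\emptyset$. This forces $F^{\ast}\cap O$ to be clopen in $K$ and to contain $p$, whence $\bigcap\mathcal{F}_p\subseteq F^{\ast}\cap O\subseteq O$, contradicting $\emptyset\neq B\subseteq\bigcap\mathcal{F}_p$ together with $B\cap O=\emptyset$. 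With $\bigcap\mathcal{F}_p$ thus connected it coincides with $C_p$, and the covering argument above completes the proof.
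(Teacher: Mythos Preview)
Your proof is correct and is the standard route to Whyburn's lemma via the \v{S}ura--Bura theorem. Note, however, that the paper does not supply its own proof of this statement: it is quoted as a classical result with a reference to Whyburn's book \cite[(9.3) p.12]{Whyburn} and used as a black box in the proofs of Theorems~\ref{th:forward-continuation} and~\ref{th:continuation}. So there is nothing to compare against; your argument simply fills in what the paper leaves to the literature, and does so correctly.
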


The second result is a generalization of the homotopy property of the
intersection number. Let $X,Y,Z$ be as in section
\ref{sec:intersection-number}. On $\left[  0,1\right]  \times\overline{X}$
consider the topology induced from $\mathbb{R\times}\overline{X}$. (This means
in particular that $\partial\left(  \left[  0,1\right]  \times\overline
{X}\right)  =\left[  0,1\right]  \times\partial\overline{X}$.) Let
$V\subset\left[  0,1\right]  \times\overline{X}$ be open and for $\alpha
\in\left[  0,1\right]  $ let $V_{\alpha}=\left\{  x|\,\left(  \alpha,x\right)
\in V\right\}  $.\begin{figure}[ptb]
\begin{center}
\includegraphics[height=3cm]{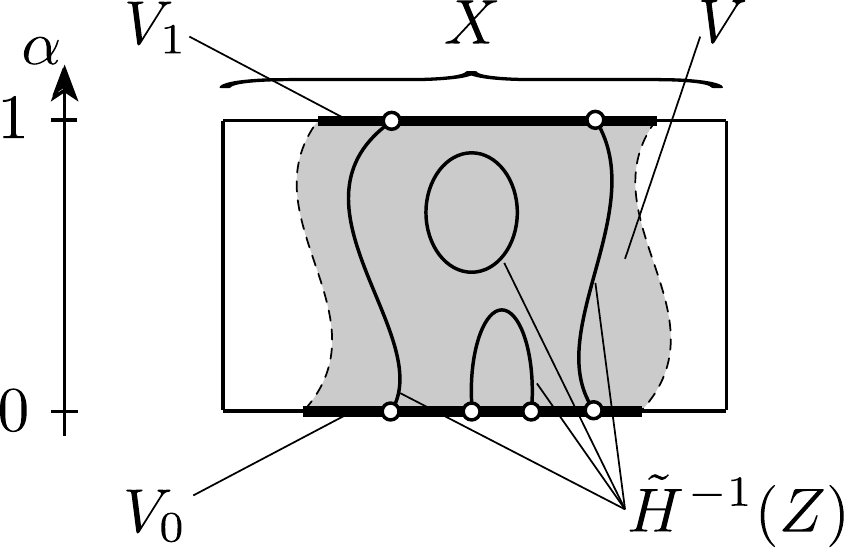}
\end{center}
\caption{Intuition behind the proof of Lemma \ref{lem:homotopy-generalised}.
The rectangle represents $[0,1]\times X$, the gray area is $V$ and the curves
contained in it are $\widetilde{H}^{-1}(Z)$. An important feature is that these
curves cannot pass through $\partial V$, which is represented by the dotted
lines. }%
\label{fig:generalised-homotopy}%
\end{figure}

\begin{lemma}
\label{lem:homotopy-generalised} If $H:\left[  0,1\right]  \times\overline
{X}\rightarrow Y$ is continuous, $H\left(  \partial V\right)  \cap\overline
{Z}=\emptyset$ and $H\left(  \overline{V}\right)  \cap\partial Z=\emptyset$
then
\[
I_{2}\left(  H\left(  0,\cdot\right)  ,V_{0},Z\right)  =I_{2}\left(  H\left(
1,\cdot\right)  ,V_{1},Z\right)  .
\]

\end{lemma}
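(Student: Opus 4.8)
The plan is to show that the map
\[
\alpha\longmapsto I_{2}\!\left(H(\alpha,\cdot),V_{\alpha},Z\right)\in\{0,1\}
\]
is well defined on $[0,1]$ and locally constant; since $[0,1]$ is connected it is then constant, which is exactly the assertion. To see it is well defined at, say, $\alpha=0$, note that if $x\in\overline{V_{0}}\setminus V_{0}$ then, writing $x=\lim x_{n}$ with $x_{n}\in V_{0}$, the point $(0,x)$ is a limit of points of $V$ and is not in $V$, so $(0,x)\in\partial V$; likewise $\{0\}\times\overline{V_{0}}\subset\overline{V}$. Hence $H(0,\overline{V_{0}}\setminus V_{0})\subset H(\partial V)$ is disjoint from $\overline{Z}$ and $H(0,\overline{V_{0}})\subset H(\overline{V})$ is disjoint from $\partial Z$, i.e. $H(0,\cdot)|_{\overline{V_{0}}}$ is admissible; the same works for $\alpha=1$ and indeed for every $\alpha$.

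The heart of the proof is a localization of the preimage that is uniform in $\alpha$ near a fixed $\alpha_{0}$. Put $K:=\{x\in\overline{V_{\alpha_{0}}}\,:\,H(\alpha_{0},x)\in\overline{Z}\}$; by admissibility of $H(\alpha_{0},\cdot)|_{\overline{V_{\alpha_{0}}}}$ this is a compact subset of $V_{\alpha_{0}}$. Choose an open set $N$ (a finite union of coordinate balls, so that $N=\mathrm{int}\,\overline{N}$) with $K\subset N\subset\overline{N}\subset V_{\alpha_{0}}$, $\overline{N}$ compact. Since $\{\alpha_{0}\}\times\overline{N}$ is a compact subset of the open set $V$, the tube lemma gives a relatively open interval $J\subset[0,1]$ containing $\alpha_{0}$ with $J\times\overline{N}\subset V$; in particular $N\subset V_{\alpha}$ for $\alpha\in J$. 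The claim is that, after shrinking $J$, $H(\alpha,\cdot)^{-1}(\overline{Z})\cap\overline{V_{\alpha}}\subset N$ for all $\alpha\in J$. If not, pick $\alpha_{n}\to\alpha_{0}$ and $x_{n}\in\overline{V_{\alpha_{n}}}\setminus N$ with $H(\alpha_{n},x_{n})\in\overline{Z}$; by compactness of $\overline{X}$ we may take $x_{n}\to x^{*}$, and then $x^{*}\notin N$, $(\alpha_{0},x^{*})\in\overline{V}$, $H(\alpha_{0},x^{*})\in\overline{Z}$. Now $H(\overline{V})\cap\partial Z=\emptyset$ forces $H(\alpha_{0},x^{*})\in Z$, and then $H(\partial V)\cap\overline{Z}=\emptyset$ forces $(\alpha_{0},x^{*})\notin\partial V$, hence $(\alpha_{0},x^{*})\in V$, i.e. $x^{*}\in V_{\alpha_{0}}$, so $x^{*}\in K\subset N$ — a contradiction. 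I expect this to be the main obstacle: it is the only place where a genuine point-set argument (rather than a direct appeal to the listed properties of $I_{2}$) is needed, and it is precisely where both boundary hypotheses on $H$ are used.

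Granting the claim, the excision property of $I_{2}$, applied to $f=H(\alpha,\cdot)$, the manifold $V_{\alpha}$ and the open subset $N$, gives $I_{2}(H(\alpha,\cdot),V_{\alpha},Z)=I_{2}(H(\alpha,\cdot),N,Z)$ for every $\alpha\in J$: its two hypotheses, $H(\alpha,V_{\alpha})\cap Z=H(\alpha,N)\cap Z$ and $H(\alpha,\partial N)\cap\overline{Z}=\emptyset$, both follow at once from the inclusion just proved (any $x\in V_{\alpha}\setminus N$ or $x\in\partial N$ with $H(\alpha,x)\in\overline{Z}$ would lie in $N$). The same inclusion shows that $H|_{J\times\overline{N}}$ is an admissible homotopy (namely $H(J\times\partial N)\cap\overline{Z}=\emptyset$ and $H(J\times\overline{N})\cap\partial Z=\emptyset$), so by the already established homotopy property of the intersection number $I_{2}(H(\alpha,\cdot),N,Z)$ is independent of $\alpha\in J$. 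Combining the two displays, $\alpha\mapsto I_{2}(H(\alpha,\cdot),V_{\alpha},Z)$ is constant on the neighbourhood $J$ of $\alpha_{0}$. As $\alpha_{0}$ was arbitrary, the map is locally constant, hence constant on the connected set $[0,1]$, and in particular $I_{2}(H(0,\cdot),V_{0},Z)=I_{2}(H(1,\cdot),V_{1},Z)$.

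For completeness I would note the alternative route suggested by Figure~\ref{fig:generalised-homotopy}: perturb $H$, keeping it fixed where it is already transversal to $Z$ near $\alpha=0,1$ and unchanged near $\partial V$, to a map $\widetilde{H}$ transversal to $Z$; then $\widetilde{H}^{-1}(Z)\cap\overline{V}$ is a compact $1$-manifold which — exactly because the two hypotheses on $H$ keep it off $\partial V$ and off $[0,1]\times\partial\overline{X}$ — has boundary equal to the disjoint union of the transversal preimages at $\alpha=0$ and at $\alpha=1$, so these have the same parity. I prefer the excision argument above, since it uses only the black-box properties of $I_{2}$ from Section~\ref{sec:intersection-number} and avoids the relative transversality technicalities.
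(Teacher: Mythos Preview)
Your proof is correct and takes a genuinely different route from the paper's. The paper proceeds exactly along your ``alternative route'': it perturbs $H$ slightly to a map $\widetilde{H}$ with $\widetilde{H}(0,\cdot)$, $\widetilde{H}(1,\cdot)$ transversal to $Z$ and $\widetilde{H}|_{V}$ smooth and transversal to $Z$, keeping the perturbation small enough that the boundary conditions on $\partial V$ and $\partial Z$ persist along the straight-line homotopy; then $\widetilde{H}^{-1}(Z)$ is a compact $1$-manifold inside $V$ whose boundary is the union of the two transversal preimages at $\alpha=0,1$, and the classification of $1$-manifolds gives equal parity. This is essentially a rerun of the proof of Lemma~\ref{lem:inter-same} on the variable domain $V$.

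Your argument instead treats $I_{2}$ as a black box and reduces to the already-proved homotopy and excision properties via a uniform localisation of the preimage near each $\alpha_{0}$. This is cleaner in that it avoids redoing the transversality and $1$-manifold argument, and it makes transparent exactly where each of the two hypotheses $H(\partial V)\cap\overline{Z}=\emptyset$, $H(\overline{V})\cap\partial Z=\emptyset$ enters (your contradiction step). The paper's approach is shorter to state but hides the same boundary analysis inside the claim that $\widetilde{H}^{-1}(Z)$ stays away from $\partial V$. One small point: your argument uses $I_{2}(H(\alpha,\cdot),V_{\alpha},Z)$ for all intermediate $\alpha$, which tacitly assumes each $V_{\alpha}$ satisfies the standing hypothesis $V_{\alpha}=\mathrm{int}\,\overline{V_{\alpha}}$; this holds in the paper's applications but is not stated in the lemma. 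The paper's direct transversality proof sidesteps this by never invoking $I_{2}$ at intermediate $\alpha$.
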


\begin{proof}
\correction{comment 9}{The proof} follows from mirror arguments to the proof of the homotopy property
of the intersection number (see Lemma \ref{lem:inter-same} in  \ref{sec:app}). The intuition behind the proof is given in Figure
\ref{fig:generalised-homotopy}.

By performing an arbitrarily small modification of $H$ we can obtain
$\widetilde{H}$ for which $\widetilde{H}\left(  0,\cdot\right)  $ and $\widetilde
{H}\left(  1,\cdot\right)  $ are transversal to $Z$ and that $\widetilde{H}|_{V}$
is smooth and transversal to $Z$. We can make the modification small enough so
that for $\beta\in\left[  0,1\right]  $,
\begin{eqnarray*}
\left(  \left(  1-\beta\right)  H+\beta\widetilde{H}\right)  \left(  \partial
V\right)  \cap\overline{Z}  &  =\emptyset,\\
\left(  \left(  1-\beta\right)  H+\beta\widetilde{H}\right)  \left(  \overline
{V}\right)  \cap\partial Z  &  =\emptyset.
\end{eqnarray*}
This in particular implies that for $d=0,1$, $H\left(  d,\cdot\right)  $ and
$\widetilde{H}\left(  d,\cdot\right)  $ are homotopic through an admissible
homotopy, so%
\begin{equation}
I_{2}\left(  H\left(  d,\cdot\right)  ,V_{d},Z\right)  =I_{2}\left(  \widetilde
{H}\left(  d,\cdot\right)  ,V_{d},Z\right)  \qquad\text{for }d=0,1.
\label{eq:homot-tilde-cont}%
\end{equation}

Since $\widetilde{H}|_{V}$ is transversal to $Z$, we have that $\widetilde{H}%
^{-1}\left(  Z\right)  $ is a $1$-dimensional submanifold with boundary of
$V$, the boundary being (see Figure \ref{fig:generalised-homotopy})
\[
\partial\widetilde{H}^{-1}(Z)=\{0\}\times\widetilde{H}\left(  0,\cdot\right)
^{-1}(Z)\,\cup\,\{1\}\times\widetilde{H}\left(  1,\cdot\right)  ^{-1}(Z).
\]
By the classification of $1$-manifolds \cite{GP74}, $\partial\widetilde{H}%
^{-1}(Z)$ consists of an even number of points, hence
\[
\#\widetilde{H}\left(  0,\cdot\right)  ^{-1}(Z)\equiv\#\widetilde{H}\left(
1,\cdot\right)  ^{-1}(Z)\,\mathrm{mod}\,2.
\]
This by the intersection property for transversal maps means that%
\[
I_{2}\left(  \widetilde{H}\left(  0,\cdot\right)  ,V_{0},Z\right)  =I_{2}\left(
\widetilde{H}\left(  1,\cdot\right)  ,V_{1},Z\right)  ,
\]
which combined with (\ref{eq:homot-tilde-cont}) concludes our proof.
\end{proof}

The proof of Theorem \ref{th:forward-continuation} is based on the classical
ideas that stem from the Leray-Schauder continuation theorem \cite{Leray}.
This is a standard technique (see \cite{Mawhin} for an overview of related
results). We adopt it to be combined with the intersection number in our
particular setting.

\begin{proof}
[Proof of Theorem \ref{th:forward-continuation}]Let us fix $\theta=\Theta_{0}%
$. We will look for a connected component $C$ in the set $\left[  0,1\right]
\times E_{\Theta_{0}}$. In fact it will turn out that we can find $C$ in
$\left[  0,1\right]  \times\left(  E_{\Theta_{0}}^{u}\oplus\left\{  0\right\}
^{s}\right)  $. Let $D_{\Theta_{0}}^{u}:=\left\{  x\in E_{\Theta_{0}}%
^{u}|\,\left\Vert x\right\Vert _{u}\leq1\right\}  $. The set $D_{\Theta_{0}%
}^{u}$ is a compact metric space, with the metric defined by the norm on the
bundle $E_{\Theta_{0}}^{u}$. Let us equip $\left[  0,1\right]  \times
D_{\Theta_{0}}^{u}$ with a metric
\begin{equation}
m\left(  \left(  \alpha_{1},x_{1}\right)  ,\left(  \alpha_{2},x_{2}\right)
\right)  :=\max\{\left\vert \alpha_{1}-\alpha_{2}\right\vert ,\left\Vert
x_{1}-x_{2}\right\Vert _{u}\}, \label{eq:m-metric}%
\end{equation}
and define a set
\[
K:=\left\{  \left(  \alpha,x\right)  |\; x\in D_{\Theta_{0}}^{u},\ f_{\alpha
}^{i}\left(  \Theta_{0};x,0\right)  \in D\text{ for all }i\in\mathbb{N} \text{
and } \alpha\in[0,1] \right\}  .
\]
From the covering $D_{\Theta_{0}}\overset{f_{\alpha}}{\Longrightarrow}D$ it follows
that any point from $D_{\Theta_{0}}^{-}$ exits $D$, which implies%
\begin{equation}
K\cap\left(  \left[  0,1\right]  \times\partial D_{\Theta_{0}}^{u}\right)
=\emptyset. \label{eq:K-not-on-boundary}%
\end{equation}

Since the family $f_{\alpha}$ is continuous and $D$ is closed, if we take a
convergent sequence $(\alpha_{j},x_{j})\in K,$ then $\lim_{j\rightarrow\infty
}f_{\alpha_{j}}^{i}\left(  \Theta_{0};x_{j},0\right)  \in D$, so $K$ is a
compact metric space with the metric (\ref{eq:m-metric}). Let $L_{\alpha
},L_{\alpha}^{k}\subset D_{\Theta_{0}}^{u}$ be compact sets defined as%
\begin{eqnarray*}
L_{\alpha}:=\left\{  x\in D_{\Theta_{0}}^{u}|\;f_{\alpha}^{i}\left(
\Theta_{0};x,0\right)  \in D\text{ for all }i\in\mathbb{N}\right\}, \\
L_{\alpha}^{k}:=\left\{  x\in D_{\Theta_{0}}^{u}|\;f_{\alpha}^{i}\left(
\Theta_{0};x,0\right)  \in D\text{ for }i=0,\ldots,k\right\},
\end{eqnarray*}
for $\alpha\in\left[  0,1\right] $ and $k\in\mathbb{N}$.
Note that $L_{\alpha}\subset L_{\alpha}^{k}$ and $L_{\alpha}^{k+1}\subset
L_{\alpha}^{k}$ for $\alpha\in\left[  0,1\right]  $ and $k\in\mathbb{N}.$

Let $K_{0}:=\left\{  0\right\}  \times L_{0}$ and $K_{1}:=\left\{  1\right\}
\times L_{1}$. By Theorem \ref{th:forward-invariance}, $K_{0}$ and $K_{1}$ are
nonempty. (Here we in fact used the fact that in the proof we have established
that we can take the $v$ from the statement of Theorem
\ref{th:forward-invariance} to be from $E_{\Theta_{0}}^{u}\oplus\left\{
0\right\}  ^{s}\cap D$.) By Lemma \ref{lem:Whyburn} we have two possibilities.
The first ensures our claim, so we need to rule out the second one, which will
conclude our proof.

Suppose that we have two disjoint compact sets $\widehat{K_{0}}$ and
$\widehat{K_{1}}$ such that $K=\widehat{K_{0}}\cup\widehat{K_{1}}$ and
$K_{i}\subset\widehat{K_{i}}$ for $i=0,1$. Let us take small $\varepsilon$ so
that%
\begin{equation}
\varepsilon<\frac{1}{2}\mathrm{dist}\left(  \widehat{K_{0}},\widehat{K_{1}%
}\right)  .\label{eq:epsilon-Ki-sep}%
\end{equation}
Because of (\ref{eq:K-not-on-boundary}), we can take $\varepsilon>0$ small
enough so that in addition to (\ref{eq:epsilon-Ki-sep}) we have%
\[
U:=\left\{  \left(  \alpha,x\right)  \in\left[  0,1\right]  \times
D_{\Theta_{0}}^{u}|\;\mathrm{dist}\left(  \left(  \alpha,x\right)
,\widehat{K_{0}}\right)  <\varepsilon\right\}  \subset\left[  0,1\right]
\times\mathrm{int}D_{\Theta_{0}}^{u}.
\]
Clearly $K_{0}\subset U$ and also by (\ref{eq:epsilon-Ki-sep}) we see that
$K_{1}\cap U=\emptyset$. We shall use the notation $U_{\alpha}=\left\{
x|\,\left(  \alpha,x\right)  \in U\right\}  $, so we can rewrite the previous
statement as $L_{0}\subset U_{0}$ and $L_{1}\cap U_{1}=\emptyset$. Since%
\[
L_{\alpha}=\bigcap_{k=0}^{\infty}L_{\alpha}^{k}\qquad\text{for }\alpha
\in\left[  0,1\right]  ,
\]
by taking sufficiently large $k$ we will have
\begin{eqnarray}
L_{0}^{k} &  \subset U_{0},\label{eq:L0-in-U0}\\
L_{1}^{k}\cap U_{1} &  =\emptyset,\label{eq:L1-no-U1}%
\end{eqnarray}
and since $\partial U\cap K=\emptyset,$ we can also choose $k$ large enough so
that%
\begin{equation}
L_{\alpha}^{k}\cap\partial U_{\alpha}=\emptyset\qquad\text{for }\alpha
\in\left[  0,1\right]  .\label{eq:Lk_alpha-ok}%
\end{equation}

Consider $Y$ and $Z$ defined in (\ref{eq:Y-forward-def}%
--\ref{eq:Z-forward-def}), and take%
\begin{equation}
\overline{V}:=\overline{U}\times\underset{k}{\underbrace{\mathbf{D}%
\times\ldots\times\mathbf{D}}}. \label{eq:V-def-cont}%
\end{equation}
We shall consider $H=\left(  H_{1},\ldots,H_{k},H_{k+1}\right)  :\overline
{V}\rightarrow Y$ which is defined for points%
\[
\mathbf{x}=\left(  \left(  \Theta_{0};x_{0}\right)  ,\left(  \theta_{1}%
;x_{1},y_{1},z_{1}\right)  ,\ldots,\left(  \theta_{k};x_{k},y_{k}%
,z_{k}\right)  \right)  \in\overline{V}_{\alpha}%
\]
as
\begin{eqnarray*}
H_{1}\left(  \alpha,\mathbf{x}\right)   &  :=\left(  f_{\alpha}\left(
\Theta_{0};x_{0},0\right)  ,0\right)  -\left(  \theta_{1};x_{1},y_{1}%
,z_{1}\right)  ,\\
H_{2}\left(  \alpha,\mathbf{x}\right)   &  :=\left(  f_{\alpha}\left(
\theta_{1};x_{1},y_{1}\right)  ,0\right)  -\left(  \theta_{2};x_{2}%
,y_{2},z_{2}\right)  ,\\
&  \;\;\;\vdots\\
H_{k}\left(  \alpha,\mathbf{x}\right)   &  :=\left(  f_{\alpha}\left(
\theta_{k-1};x_{k-1},y_{k-1}\right)  ,0\right)  -\left(  \theta_{k}%
;x_{k},y_{k},z_{k}\right)  ,\\
H_{k+1}\left(  \alpha,\mathbf{x}\right)   &  :=f_{\alpha}\left(  \theta
_{k};x_{k},y_{k}\right)  .
\end{eqnarray*}

We will now show that
\begin{equation}
H\left(  \partial V\right)  \cap\overline{Z}=\emptyset.
\label{eq:H-admissible-cont}%
\end{equation}
This follows from mirror arguments to those used to show (\ref{eq:F-admisible}%
). The only difference is that we also need to consider the case when $\left(
\alpha,\mathbf{x}\right)  \in V$ is such that $\left(  \alpha,x_{0}\right)
\in\partial U$. In such case, due to (\ref{eq:Lk_alpha-ok}), we see that we
can not have $f_{\alpha}^{k}\left(  \Theta_{0};x_{0},0\right)  \in D$ so any
point for which $H\left(  \alpha,\mathbf{x}\right)  \in Z$ can not have
$\left(  \alpha,x_{0}\right)  \in\partial U$. We have thus shown
(\ref{eq:H-admissible-cont}).

From arguments identical to showing (\ref{eq:F-admisible-2}) we also obtain
\begin{equation}
H\left(  \overline{V}\right)  \cap\partial Z=\emptyset.
\label{eq:H-admissible-cont-2}%
\end{equation}

By Lemma (\ref{lem:homotopy-generalised}) together with
(\ref{eq:H-admissible-cont}--\ref{eq:H-admissible-cont-2}) we obtain%
\begin{equation}
I_{2}\left(  H\left(  0,\cdot\right)  ,V_{0},Z\right)  =I_{2}\left(  H\left(
1,\cdot\right)  ,V_{1},Z\right)  . \label{eq:contradiction1}%
\end{equation}

We will now compute $I_{2}\left(  H\left(  0,\cdot\right)  ,V_{0},Z\right)  $.
Let $\overline{X}$ be the set defined in (\ref{eq:X-forward-def}). The first
coordinate of $V_{0}$ is $U_{0}$ (see (\ref{eq:V-def-cont})). By
(\ref{eq:L0-in-U0}) $U_{0}$ contains all points $x_{0}$ such that $f_{0}%
^{i}\left(  \Theta_{0};x_{0},0\right)  \in D$ for every $i\in\left\{
1,\ldots,k\right\}  .$ This means that if $\mathbf{x}\in X\setminus V_{0}$
then we can not have $f_{0}^{i}\left(  \Theta_{0};x_{0},0\right)  \in D$ for
any $i\in\left\{  1,\ldots,k\right\}  $. Thus, for $\mathbf{x}\in X\setminus
V_{0}$ we can not have $H\left(  0,\mathbf{x}\right)  \in Z$, hence%
\[
H\left(  0,X\right)  \cap Z=H\left(  0,V_{0}\right)  \cap Z.
\]
Note that from (\ref{eq:H-admissible-cont}) $H\left(  0,\partial V_{0}\right)
\cap\overline{Z}=\emptyset$, so from the excision property%
\begin{equation}
I_{2}\left(  H\left(  0,\cdot\right)  ,X,Z\right)  =I_{2}\left(  H\left(
0,\cdot\right)  ,V_{0},Z\right)  . \label{eq:excision-to-V0}%
\end{equation}
In the proof of Theorem \ref{th:forward-invariance} we have established that
$I_{2}\left(  H\left(  0,\cdot\right)  ,X,Z\right)  =1,$ so by
(\ref{eq:excision-to-V0})%
\begin{equation}
I_{2}\left(  H\left(  0,\cdot\right)  ,V_{0},Z\right)  =1.
\label{eq:contradiction2}%
\end{equation}

We will now compute $I_{2}\left(  H\left(  1,\cdot\right)  ,V_{1},Z\right)  $.
The set $L_{1}^{k}$ contains all points $x_{0}$ for which $f_{1}^{i}\left(
\Theta_{0};x_{0},0\right)  \in D$ for all $i\in\left\{  1,\ldots,k\right\}  $.
If $\mathbf{x}\in V_{1}$, then $x_{0}\in U_{1}$, so by (\ref{eq:L1-no-U1}) we
can not have $f_{1}^{i}\left(  \Theta_{0};x_{0},0\right)  \in D$ for all
$i\in\left\{  1,\ldots,k\right\}  $. This means that for $\mathbf{x}\in V_{1}%
$, $H\left(  1,\mathbf{x}\right)  \notin Z$, so by the intersection property%
\begin{equation}
I_{2}\left(  H\left(  1,\cdot\right)  ,V_{1},Z\right)  =0.
\label{eq:contradiction3}%
\end{equation}

By (\ref{eq:contradiction1}), (\ref{eq:contradiction2}),
(\ref{eq:contradiction3}) we have obtained a contradiction. This means that we
have ruled out the second case of Lemma \ref{lem:Whyburn} and finished our proof.
\end{proof}


\section{Proof of Theorem \ref{th:continuation}\label{proof:continuation}}

\begin{proof}
The proof follows along the same lines as the proof of Theorem
\ref{th:forward-continuation}.

Let us fix $\theta=\Theta_{0}$. We will look for a connected component $C$
in $\left[  0,1\right]  \times E_{\Theta_{0}}$. The set $D_{\Theta_{0}}$ is a
compact metric space, with the metric defined by the norm on the bundle
$E_{\Theta_{0}}$. We equip $\left[  0,1\right]  \times D_{\Theta_{0}}$ with a
metric
\begin{equation}
m\left(  \left(  \alpha_{1},v_{1}\right)  ,\left(  \alpha_{2},v_{2}\right)
\right)  =\max\{\left\vert \alpha_{1}-\alpha_{2}\right\vert ,\left\Vert
v_{1}-v_{2}\right\Vert \}, \label{eq:m-metric-again}%
\end{equation}
and define a set
\begin{eqnarray*}
K &:=\{  \left(  \alpha,v\right)  |\;v\in D_{\Theta_{0}}\text{ and there exists
a trajectory of }f_{\alpha}\text{ in }D  \\ &\qquad \text{ passing through }v\}.
\end{eqnarray*}

We shall say that a sequence $\left\{  v_{i}\right\}  $ is a trajectory of
$f_{\alpha}$ of length $k$ in $D$ passing through $v$ if $v_{0}=v$, $v_{i}\in
D$ for $i=-k,\ldots,k$ and $f_{\alpha}\left(  v_{i}\right)  =v_{i+1}$ for
$i=-k,\ldots,k-1$.

The set $K$ is a compact metric space with the metric (\ref{eq:m-metric-again}%
). For $\alpha\in\left[  0,1\right]  $ and $k\in\mathbb{N}$ let
$L_{\alpha},L_{\alpha}^{k}\subset D_{\Theta_{0}}^{u}$ be compact sets
defined as%
\begin{eqnarray*}
L_{\alpha}  &  :=\left\{  v\in D_{\Theta_{0}}|\;\text{there exists a trajectory
of }f_{\alpha}\text{ in }D\text{ passing through }v\right\}  ,\\
L_{\alpha}^{k}  &  :=\{  v\in D_{\Theta_{0}}|\;\text{there exists a
trajectory of }f_{\alpha}\text{ of length }k\text{ in }D \\ & \qquad \text{ passing through
}v\}  .
\end{eqnarray*}
Note that $L_{\alpha}^{k+1}\subset L_{\alpha}^{k}$ and $L_{\alpha}\subset
L_{\alpha}^{k}$ for $\alpha\in\left[  0,1\right]  $ and $k\in\mathbb{N}.$

Let $K_{0}:=\left\{  0\right\}  \times L_{0}$ and $K_{1}:=\left\{  1\right\}
\times L_{1}$. By Theorem \ref{th:invariance}, $K_{0}$ and $K_{1}$ are
nonempty. By Lemma \ref{lem:Whyburn} we have two possibilities. The first
ensures our claim, so we need to rule out the second one, which will conclude
our proof.

Suppose that we have disjoint compact $\widehat{K_{0}},$ $\widehat{K_{1}}$
such that $K=\widehat{K_{0}}\cup\widehat{K_{1}}$ and $K_{0}\subset
\widehat{K_{0}}$, $K_{1}\subset\widehat{K_{1}}$. Consider $\varepsilon
<\frac{1}{2}\mathrm{dist}\left(  \widehat{K_{0}},\widehat{K_{1}}\right)  $, chosen sufficiently small so that%
\[
U:=\left\{  \left(  \alpha,v\right)  \in\left[  0,1\right]  \times
D_{\Theta_{0}} | \;\mathrm{dist}\left(  \left(  \alpha,v\right)
,\widehat{K_{0}}\right)  <\varepsilon  \right\}  \subset\left[
0,1\right]  \times\mathrm{int}D_{\Theta_{0}}.
\]
We shall embed $U$ in $\mathbb{R}^{2n}$ (we use Notation \ref{notation2})
\[
\mathbf{U:}=\left\{  \left(  \alpha,\left(  v,z\right)  \right)  \in\left[
0,1\right]  \times\mathcal{T}\,|\;\left(  \alpha,v\right)  \in U,\ \left\Vert
z\right\Vert _{\mathbb{R}^{2n}}\leq\delta\right\}  \subset\mathbb{R}^{2n}.
\]

Consider $Y$ and $Z$ defined in (\ref{eq:Y-full-def}--\ref{eq:Z-full-def}),
and take%
\[
\overline{V}:=D^{u}\times\underset{k}{\underbrace{\mathbf{D}\times\ldots
\times\mathbf{D}}}\times\overline{\mathbf{U}}\times\underset{k}{\underbrace
{\mathbf{D}\times\ldots\times\mathbf{D}}}.
\]
We shall consider $H=\left(  H_{-k},\ldots,H_{k},H_{k+1}\right)  :\overline
{V}\rightarrow Y$ which is defined for points%
\begin{eqnarray*}
\mathbf{x}  &  =(\left(  \theta_{-k-1};x_{-k-1}\right)  ,\left(  \theta
_{-k};x_{-k},y_{-k},z_{-k}\right)  ,\ldots,\left(  \theta_{-1};x_{-1}%
,y_{-1},z_{-1}\right)  ,\\
&  \qquad\qquad\qquad\left(  \Theta_{0};x_{0},y_{0}%
,z_{0}\right)  ,\left(  \theta_{1};x_{1},y_{1},z_{1}\right)  ,\ldots,\left(
\theta_{k};x_{k},y_{k},z_{k}\right)  )
\end{eqnarray*}
as (compare with (\ref{eq:F-from-proof2}) used in the proof of Theorem
\ref{th:invariance})%
\begin{eqnarray*}
H_{-k}\left(  \alpha,\mathbf{x}\right)   &  :=\left(  f_{\alpha}\left(
\theta_{-k-1};x_{-k-1},0\right)  ,0\right)  -\left(  \theta_{-k};x_{-k}%
,y_{-k},z_{-k}\right)  ,\\
H_{-k+1}\left(  \alpha,\mathbf{x}\right)   &  :=\left(  f_{\alpha}\left(
\theta_{-k};x_{-k},y_{-k}\right)  ,0\right)  -\left(  \theta_{-k+1}%
;x_{-k+1},y_{-k+1},z_{-k+1}\right)  ,\\
&  \;\;\;\vdots\\
H_{-1}\left(  \alpha,\mathbf{x}\right)   &  :=\left(  f_{\alpha}\left(
\theta_{-2};x_{-2},y_{-2}\right)  ,0\right)  -\left(  \theta_{-1}%
;x_{-1},y_{-1},z_{-1}\right)  ,\\
H_{0}\left(  \alpha,\mathbf{x}\right)   &  :=\left(  f_{\alpha}\left(
\theta_{-1};x_{-1},y_{-1}\right)  ,0\right)  -\left(  \Theta_{0};x_{0}%
,y_{0},z_{0}\right)  ,\\
H_{1}\left(  \alpha,\mathbf{x}\right)   &  :=\left(  f_{\alpha}\left(
\Theta_{0};x_{0},y_{0}\right)  ,0\right)  -\left(  \theta_{1};x_{1}%
,y_{1},z_{1}\right)  ,\\
H_{2}\left(  \alpha,\mathbf{x}\right)   &  :=\left(  f_{\alpha}\left(
\theta_{1};x_{1},y_{1}\right)  ,0\right)  -\left(  \theta_{2};x_{2}%
,y_{2},z_{2}\right)  ,\\
&  \;\;\;\vdots\\
H_{k}\left(  \alpha,\mathbf{x}\right)   &  :=\left(  f_{\alpha}\left(
\theta_{k-1};x_{k-1},y_{k-1}\right)  ,0\right)  -\left(  \theta_{k}%
;x_{k},y_{k},z_{k}\right)  ,\\
H_{k+1}\left(  \alpha,\mathbf{x}\right)   &  :=f_{\alpha}\left(  \theta
_{k};x_{k},y_{k}\right)  .
\end{eqnarray*}

From now on we skip the details since they follow along the same lines as in
the proof of Theorem \ref{th:forward-continuation}. We just outline the steps:
Using Lemma \ref{lem:homotopy-generalised} we can show that%
\begin{equation}
I_{2}\left(  H\left(  0,\cdot\right)  ,V_{0},Z\right)  =I_{2}\left(  H\left(
1,\cdot\right)  ,V_{1},Z\right)  . \label{eq:contr-cont-full-1}%
\end{equation}
Using the excision property, for $X$ defined in (\ref{eq:X-full-def}), we
obtain%
\begin{equation}
I_{2}\left(  H\left(  0,\cdot\right)  ,V_{0},Z\right)  =I_{2}\left(  H\left(
0,\cdot\right)  ,X,Z\right)  =1. \label{eq:contr-cont-full-2}%
\end{equation}
From the fact that $V_{1}$ can not contain trajectories of length $k$ in $D$
passing through $D_{\Theta_{0}}$ we also obtain%
\begin{equation}
I_{2}\left(  H\left(  1,\cdot\right)  ,V_{1},Z\right)  =0.
\label{eq:contr-cont-full-3}%
\end{equation}
Conditions (\ref{eq:contr-cont-full-1}--\ref{eq:contr-cont-full-3}) lead to a
contradiction, which concludes our proof.
\end{proof}


\section{Proof of Lemma \ref{lem:nhim-covers}.\label{sec:nhim-covers}}

\begin{proof}
Let $E=E^{u}\oplus E^{s}$ and consider $l:E\rightarrow E$ defined as
\[
l\left(  \theta;x,y\right)  :=\left(  f\left(  \theta\right)  ;df(\theta
)x,df(\theta)y\right)  .
\]
Note $l$ is well defined since the splitting (\ref{eq:splitting}) {is
}invariant under the action of the differential $df$. We shall refer to $l$ as
the `linearized map'. Note that
\[
l^{k}\left(  \theta;x,y\right)  =\left(  f^{k}\left(  \theta\right)
;df^{k}(\theta)x,df^{k}(\theta)y\right)  .
\]

For $r>0$ we define $D(r)\subset E$ as%
\[
D(r):=\left\{  \left(  \theta;x,y\right)  |\;\theta\in\Lambda,\ x\in
E_{\theta}^{u},\ y\in E_{\theta}^{s},\ \left\Vert x\right\Vert \leq
r,\ \left\Vert y\right\Vert \leq r\right\}  .
\]
We will show that for any $r>0$ we have
\begin{equation}
D(r)\overset{l^k}{\Longrightarrow}D(r).\label{eq:l-covering}%
\end{equation}
The homotopy $h$ (see Definition \ref{def:covering}) for the covering
(\ref{eq:l-covering}) can be taken as
\begin{equation}
h\left(  \beta,\left(  \theta;x,y\right)  \right)  :=\left(  f^{k}\left(
\theta\right)  ;df^{k}(\theta)x,(1-\beta)df^{k}(\theta)y\right)  ,\quad
\beta\in\left[  0,1\right]  .\label{eq:homotopy-for-nhim}%
\end{equation}

Before showing that $h$ satisfies all required conditions we note that since
$\lambda<1$, from $k>\log_{\lambda}\frac{1}{C}$ it follows that $C\lambda
^{k}<1$. Using (\ref{eq:expansion-condition}) we also see that for any
$\theta\in\Lambda$ and $x\in E_{\theta}^{u}$
\begin{equation}
\left\Vert x\right\Vert =\left\Vert d\left(  f^{-k}\circ f^{k}\right)
(\theta)x\right\Vert =\left\Vert df^{-k}(f^{k}(\theta))df^{k}(\theta
)x\right\Vert \leq C\lambda^{k}\left\Vert df^{k}(\theta)x\right\Vert
.\label{eq:expansion-condition-2}%
\end{equation}

We will now show that $h$ satisfies conditions from Definition
\ref{def:covering}. If $\left(  \theta;x,y\right)  \in D^{-}\left(  r\right)
$, meaning that $\left\Vert x\right\Vert =r,$ then by
(\ref{eq:expansion-condition-2}), $\left\Vert df^{k}(\theta)x\right\Vert
>\left(  C\lambda^{k}\right)  ^{-1}\left\Vert x\right\Vert >r$, hence
$h\left(  \beta,\left(  \theta;x,y\right)  \right)  \notin D(r)$, ensuring
(\ref{eq:topological-expansion}).

For any $\left(  \theta;x,y\right)  \in D\left(  r\right)  $, by
(\ref{eq:contraction-condition}), $\left\Vert \left(  1-\beta\right)
df^{k}\left(  \theta\right)  y\right\Vert <C\lambda^{k}\left\Vert y\right\Vert
<r$, so $h\left(  \beta,\left(  \theta;x,y\right)  \right)  \notin
D^{+}\left(  r\right)  $, which means that we have verified
(\ref{eq:topological-contraction}).

From (\ref{eq:homotopy-for-nhim}) we see that
 the map $\eta$ from Definition \ref{def:covering} is $\eta=\left(
f|_{\Lambda}\right)  ^{k}$. Since $\Lambda$ is invarant under $f,$ and $f$ is
a diffeomorphism, $\left(  f|_{\Lambda}\right)  ^{k}$ is also a
diffeomorphism, so $\deg_{2}\left(  \eta\right)  =1$ ensuring
(\ref{eq:deg-covering}).
Also from (\ref{eq:homotopy-for-nhim}), $A_{\theta}=df^{k}(\theta
)|_{E_{\theta}^{u}}$. Since $C\lambda^{k}<1$, by
(\ref{eq:expansion-condition-2}) $A_{\theta}$ is expanding. We have thus established (\ref{eq:l-covering}).

For sufficiently small $r$ the linearized dynamics inside $D\left(  r\right)
$ is topologically conjugate to the true dynamics in a neighborhood of
$\Lambda$, i.e.
$
l^{k}\circ g=g\circ f^{k}$
where $g$ is the conjugating homeomorphism \cite{Pugh1970}. The set
$D=g^{-1}\left(  D(r)\right)  $, equipped with the structure of the vector
bundle $E$ induced by $g$, constitutes the neighbourhood of $\Lambda$ in $M$
which $f^{k}$-covers itself.
\end{proof}

\section{Acknowledgements\label{sec:ack}}
We would like to thank Rafael de la Llave for his encouragement and suggestions. In particular, we thank him for pointing us to the Leray-Schauder continuation techniques and for his suggestions that led us to formulating Theorems \ref{th:forward-continuation} and \ref{th:continuation}.  We would also like to thank the anonymous Reviewers for their comments, suggestions and corrections, which helped us improve our paper.

\appendix
\section{Construction of the intersection number\label{sec:app}}

Here we present a brief overview of the construction of $I_{2}\left(
f,X,Z\right)  $.

\begin{lemma}
\label{lem:inter-finite} If $f$ is admissible, $f|_{X}$ is smooth and
transversal to $Z$, then the number $\#f^{-1}\left(  Z\right)  $ is finite.
\end{lemma}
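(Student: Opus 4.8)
The plan is to argue by compactness together with the fact that a transversal preimage of a submanifold of complementary dimension is a zero-dimensional manifold, so it is discrete; once we know it is also contained in a compact set, it must be finite.

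First I would observe that since $f$ is admissible, $f(\partial X)\cap\overline{Z}=\emptyset$, and in particular $f^{-1}(Z)\subset f^{-1}(\overline Z)$ is a subset of $X=\mathrm{int}\overline X$ that stays away from $\partial X$; more precisely, because $\overline X$ is compact and $f(\partial X)$ and $\overline Z$ are disjoint closed sets, there is a closed neighbourhood of $\partial X$ in $\overline X$ whose image misses $\overline Z$. Hence $f^{-1}(\overline Z)$ is a closed subset of $\overline X$ disjoint from $\partial X$, therefore a compact subset of the open manifold $X$. In particular $f^{-1}(Z)=f^{-1}(\overline Z)$ (using also $f(\overline X)\cap\partial Z=\emptyset$ from admissibility, so no preimage point can map into $\overline Z\setminus Z=\partial Z$), and this set is compact.

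Next I would use transversality. Since $f|_X$ is smooth and transversal to the boundaryless submanifold $Z\subset Y$, the preimage $f^{-1}(Z)$ is a smooth submanifold of $X$ of dimension $\dim X-\operatorname{codim} Z=\dim X+\dim Z-\dim Y=0$ by the complementary dimension hypothesis. A zero-dimensional manifold is a discrete set of points, i.e. every point of $f^{-1}(Z)$ has a neighbourhood in $X$ containing no other point of $f^{-1}(Z)$.

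Finally I would combine the two facts: $f^{-1}(Z)$ is a discrete subset of $X$ which is also compact (as a closed subset of the compact set $f^{-1}(\overline Z)$, shown above to lie in $X$). A compact discrete set is finite, so $\#f^{-1}(Z)<\infty$. The only mildly delicate point — and the step I would treat most carefully — is the separation argument showing $f^{-1}(\overline Z)$ is a compact subset of the \emph{open} manifold $X$ rather than merely a closed subset of $\overline X$; this is exactly where admissibility of $f$ (both conditions $f(\partial X)\cap\overline Z=\emptyset$ and $f(\overline X)\cap\partial Z=\emptyset$) is used, and it is what lets us pass from "closed in $\overline X$" to "compact in $X$", after which discreteness forces finiteness.
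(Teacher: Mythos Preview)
Your proof is correct and follows essentially the same approach as the paper: admissibility ensures $f^{-1}(Z)$ avoids $\partial X$ and that $f^{-1}(\partial Z)=\emptyset$, transversality makes $f^{-1}(Z)$ a $0$-dimensional submanifold of $X$ (hence discrete), and compactness of $\overline{X}$ then forces finiteness. You have simply been more explicit than the paper about why $f^{-1}(Z)$ is compact in $X$ rather than merely closed in $\overline{X}$.
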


\begin{proof}
Since $f$ is admissible, $f^{-1}(Z)\ $ is separated from $\partial X$ and
$f^{-1}\left(  \partial Z\right)  =\emptyset$. From the transversality of
$f|_{X}$ to $Z$ we obtain that $f^{-1}(Z)$ is a $0$-dimensional submanifold of
$X$. From transversality, the points in $f^{-1}(Z)$ cannot accumulate, and
since they are contained in the compact set $\overline{X}$, their number is finite.
\end{proof}

\begin{figure}[ptb]
\begin{center}
\includegraphics[height=3.3cm]{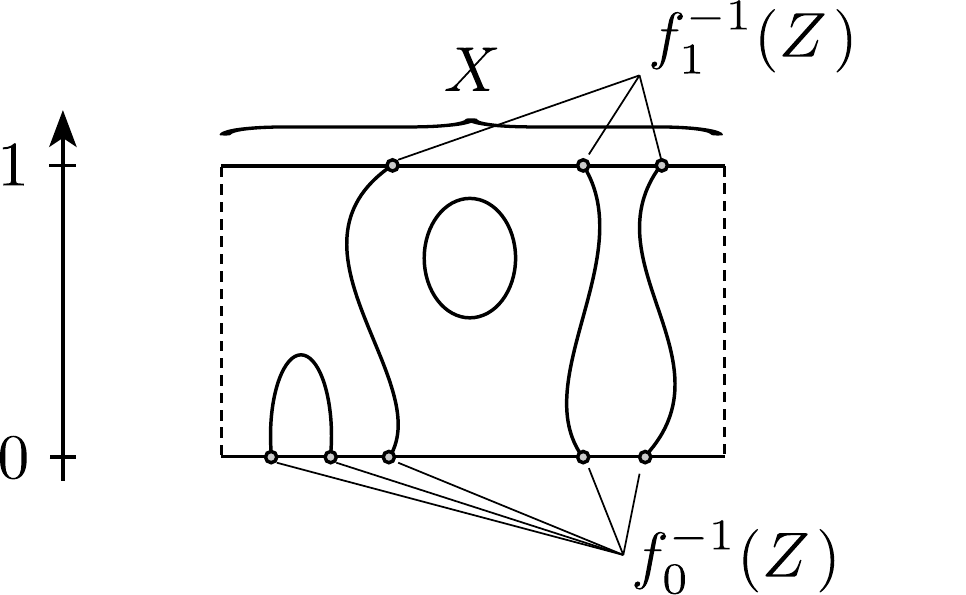}
\end{center}
\caption{Intuition behind the proof of Lemma \ref{lem:inter-same}. The
rectangle represents $[0,1]\times X$ and the curves contained in it are
$H^{-1}(Z)$. An important feature is that due to the admissibility of the
homotopy these curves cannot pass through $[0,1]\times\partial X$, which is
represented by the dotted lines. }%
\label{fig:inter-construction1}%
\end{figure}

Above we have shown that for $f|_{X}$ transversal to $Z$ defining the
intersection number as%
\begin{equation}
I_{2}\left(  f,X,Z\right)  :=\#f^{-1}\left(  Z\right)  \,\mathrm{mod}\,2
\label{eq:I2-for-transversal}%
\end{equation}
makes sense since we do not have infinity on the right hand side of the
defining equation. We now show that this number remains constant when passing
through an admissible homotopy. The proof of the following lemma is based on
the fact that if we have a homotopy $H$ between two smooth maps, both being
transversal to some given manifold, then we can make $H$ transversal to that
manifold by an arbitrarily small modification (see \cite[Extension Theorem and
Corollary that follows]{GP74} for details).

\begin{lemma}
\label{lem:inter-same}Assume that $f_{0},f_{1}:\overline{X}\rightarrow Y$ are
two admissible maps and that $f_{0}|_{X}$ and $f_{1}|_{X}$ are smooth and
transversal to $Z$. If $f_{0}$ and $f_{1}$ are homotopic via an admissible
homotopy, then
\[
\#f_{0}^{-1}(Z)=\#f_{1}^{-1}(Z)\,\mathrm{mod\,}2.
\]

\end{lemma}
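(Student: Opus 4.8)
The plan is to run the classical "boundary of a $1$--manifold" argument, adapted to admissible maps. Fix an admissible homotopy $H:[0,1]\times\overline{X}\rightarrow Y$ with $H(0,\cdot)=f_{0}$ and $H(1,\cdot)=f_{1}$. Recall that $\#f_{0}^{-1}(Z)$ and $\#f_{1}^{-1}(Z)$ are finite by Lemma \ref{lem:inter-finite}. The idea is to perturb $H$ so that it becomes transversal to $Z$ on the interior $(0,1)\times X$, without changing it at the two ends $\{0,1\}\times X$ (where it is already transversal) and without destroying admissibility; then $\widetilde{H}^{-1}(Z)$ is a compact $1$--manifold whose boundary consists precisely of $\{0\}\times f_{0}^{-1}(Z)$ and $\{1\}\times f_{1}^{-1}(Z)$, and the parity statement follows from the classification of $1$--manifolds.

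\medskip\noindent\emph{Step 1: the perturbation.} Since $f_{0}|_{X}$ and $f_{1}|_{X}$ are smooth and transversal to $Z$, the relative transversality (Extension) Theorem of \cite{GP74} produces a map $\widetilde{H}:[0,1]\times\overline{X}\rightarrow Y$, arbitrarily $C^{0}$--close to $H$, which agrees with $H$ on a neighbourhood of $\{0,1\}\times\overline{X}$ (so $\widetilde{H}(0,\cdot)=f_{0}$ and $\widetilde{H}(1,\cdot)=f_{1}$), and whose restriction to $(0,1)\times X$ is smooth and transversal to $Z$. Because $\overline{X}$ is compact, $\partial\overline{X}=\overline{X}\setminus X$ is compact, so $H([0,1]\times\partial\overline{X})$ is a compact set disjoint from $\overline{Z}$, and $H([0,1]\times\overline{X})$ is a compact set disjoint from $\partial Z$; choosing the modification small enough (smaller than these separations, exactly as in the proof of Lemma \ref{lem:homotopy-generalised}) we keep $\widetilde{H}$ admissible in the sense of Definition \ref{def:admissible-H}. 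In particular $\widetilde{H}^{-1}(\overline{Z})\subset[0,1]\times X$ and $\widetilde{H}^{-1}(\partial Z)=\emptyset$, so $\widetilde{H}^{-1}(Z)=\widetilde{H}^{-1}(\overline{Z})$ is a closed subset of the compact set $[0,1]\times\overline{X}$, hence compact.

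\medskip\noindent\emph{Step 2: $\widetilde{H}^{-1}(Z)$ is a compact $1$--manifold with the expected boundary.} Viewing $[0,1]\times X$ as a manifold with boundary $\{0,1\}\times X$, the map $\widetilde{H}$ is transversal to $Z$ on the interior $(0,1)\times X$ (by construction) and on the boundary $\{0,1\}\times X$ (since there it equals $f_{0}\sqcup f_{1}$, which are transversal to $Z$ by hypothesis). The preimage theorem for manifolds with boundary \cite{GP74} then gives that $\widetilde{H}^{-1}(Z)\cap([0,1]\times X)$ is a submanifold with boundary of $[0,1]\times X$, of dimension $\dim([0,1]\times X)-\dim Z=1$, with
\[
\partial\bigl(\widetilde{H}^{-1}(Z)\bigr)=\widetilde{H}^{-1}(Z)\cap\bigl(\{0,1\}\times X\bigr)=\bigl(\{0\}\times f_{0}^{-1}(Z)\bigr)\cup\bigl(\{1\}\times f_{1}^{-1}(Z)\bigr).
\]
Combined with Step 1, $\widetilde{H}^{-1}(Z)$ is a \emph{compact} $1$--manifold with boundary.

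\medskip\noindent\emph{Step 3: conclusion.} By the classification of compact $1$--manifolds with boundary \cite{GP74}, $\widetilde{H}^{-1}(Z)$ is a finite disjoint union of circles and closed arcs, so its boundary has an even number of points. Hence $\#f_{0}^{-1}(Z)+\#f_{1}^{-1}(Z)$ is even, i.e.
\[
\#f_{0}^{-1}(Z)\equiv\#f_{1}^{-1}(Z)\ \mathrm{mod}\ 2,
\]
as claimed. The main point requiring care is Step 1: one must invoke the \emph{relative} transversality theorem so that the perturbation leaves $f_{0},f_{1}$ untouched at the ends, and one must use compactness to guarantee that the perturbation is small enough to preserve both clauses of admissibility throughout the homotopy; once this is in place, Steps 2 and 3 are the standard differential-topology argument.
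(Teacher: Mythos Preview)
Your proof is correct and follows essentially the same route as the paper's: perturb the admissible homotopy to one transversal to $Z$ (using the Extension Theorem from \cite{GP74} so that the ends remain $f_0,f_1$ and admissibility is preserved), then apply the classification of compact $1$--manifolds to the preimage. The paper's version is terser about the relative nature of the perturbation and the preservation of admissibility, but the argument is the same.
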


\begin{proof}
The intuition behind the proof is given in Figure
\ref{fig:inter-construction1}.

Let $F:[0,1]\times\overline{X}\rightarrow Y$ be an admissible homotopy from
$f_{0}$ to $f_{1}$. By performing an arbitrarily small modification we can
arrive at an admissible $F$ such that $F|_{[0,1]\times X}$ is smooth and
transversal to $Z$. Note that since $F$ is admissible, $F^{-1}\left(
Z\right)  $ does not intersect $\left[  0,1\right]  \times\partial X$. Since
$F|_{[0,1]\times X}$ is transversal to $Z$, we have that $F^{-1}\left(
Z\right)  $ is a $1$-dimensional submanifold with boundary of $[0,1]\times X$,
the boundary being
\[
\partial F^{-1}(Z)=\{0\}\times f_{0}^{-1}(Z)\,\cup\,\{1\}\times f_{1}%
^{-1}(Z).
\]
By the classification of $1$-manifolds \cite{GP74}, $\partial F^{-1}(Z)$
consists of an even number of points, hence
\[
\#f_{0}^{-1}(Z)=\#f_{1}^{-1}(Z)\,\mathrm{mod}\, 2,
\]
as required.
\end{proof}

For any admissible map $f$ we can find an admissible map $g,$ arbitrarily
close to $f$, for which $g|_{X}$ is smooth, and such that $f$ and $g$ are
homotopic through an admissible homotopy. If $g|_{X}$ is not transversal to
$Z$, then we can again perform an arbitrarily small modification to obtain
transversality of $g|_{X}$ to $Z$ . We can therefore define%
\begin{equation}
I_{2}\left(  f,X,Z\right)  :=\#g^{-1}(Z)\,\mathrm{mod\,}2, \label{eq:I2-def}%
\end{equation}
where $f$ and $g$ are as above. By Lemma \ref{lem:inter-finite} the number
$\#g^{-1}(Z)$ is finite an by Lemma \ref{lem:inter-same} the number
$\#g^{-1}(Z)\,\mathrm{mod\,}2$ does not depend on the choice of $g$, so
$I_{2}\left(  f,X,Z\right)  $ from (\ref{eq:I2-def}) is well-defined. 

What is left is to prove that for $I_{2}\left(  f,X,Z\right)  $ defined in
(\ref{eq:I2-def}) we have the \correction{comment 16}{homotopy property,} intersection property and the excision property.

\correction{}{
\begin{lemma} \label{lem:hom-prop}If $f_1,f_2$ are homotopic through an admissible homotopy then $I_2(f_1,X,Z)=I_2(f_2,X,Z)$.
\end{lemma}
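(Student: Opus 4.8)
The plan is to reduce everything to Lemma~\ref{lem:inter-same} by using the construction of $I_2$ given in (\ref{eq:I2-def}). Recall that, by definition, $I_2(f_i,X,Z)=\#g_i^{-1}(Z)\,\mathrm{mod}\,2$, where $g_i:\overline{X}\to Y$ is any admissible map with $g_i|_X$ smooth and transversal to $Z$, chosen close enough to $f_i$ that $f_i$ and $g_i$ are homotopic through an admissible homotopy; such $g_i$ exist by the discussion preceding the lemma, and the value does not depend on the choice of $g_i$ by Lemma~\ref{lem:inter-same}. So first I would fix such approximations $g_1$ of $f_1$ and $g_2$ of $f_2$, together with admissible homotopies $H_i:[0,1]\times\overline{X}\to Y$ with $H_i(0,\cdot)=f_i$ and $H_i(1,\cdot)=g_i$.

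The key step is to show that $g_1$ and $g_2$ are themselves homotopic through an admissible homotopy. Let $G:[0,1]\times\overline{X}\to Y$ be the given admissible homotopy from $f_1$ to $f_2$. I would concatenate three homotopies: $H_1$ traversed backwards (from $g_1$ to $f_1$), then $G$ (from $f_1$ to $f_2$), then $H_2$ (from $f_2$ to $g_2$), after the usual reparametrisation splitting $[0,1]$ into three subintervals. The resulting map $K:[0,1]\times\overline{X}\to Y$ is continuous by the pasting lemma, since consecutive pieces agree at the junction times (the values there being $f_1$ and $f_2$ respectively), and it is a homotopy from $g_1$ to $g_2$. Moreover the two defining conditions of an admissible homotopy from Definition~\ref{def:admissible-H}, namely $K([0,1]\times\partial X)\cap\overline{Z}=\emptyset$ and $K([0,1]\times\overline{X})\cap\partial Z=\emptyset$, hold on each of the three pieces by admissibility of $H_1$, $G$, $H_2$, hence hold for $K$. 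Thus $K$ is admissible.

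Now $g_1,g_2$ are admissible, $g_1|_X$ and $g_2|_X$ are smooth and transversal to $Z$, and they are homotopic through the admissible homotopy $K$; Lemma~\ref{lem:inter-same} therefore yields
\[
\#g_1^{-1}(Z)\equiv\#g_2^{-1}(Z)\,\mathrm{mod}\,2,
\]
which by (\ref{eq:I2-def}) is exactly $I_2(f_1,X,Z)=I_2(f_2,X,Z)$.

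The only point requiring any attention is the verification that concatenating admissible homotopies (and running one backwards) again produces an admissible homotopy and a continuous map on $[0,1]\times\overline{X}$; this is routine bookkeeping via the pasting lemma, so I do not expect a genuine obstacle here — the proof is essentially an assembly of results already established, with Lemma~\ref{lem:inter-same} doing the real work.
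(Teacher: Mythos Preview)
Your proposal is correct and follows essentially the same approach as the paper: pick transversal approximations $g_1,g_2$ of $f_1,f_2$ that are admissibly homotopic to them, concatenate the three admissible homotopies to get an admissible homotopy from $g_1$ to $g_2$, and then invoke Lemma~\ref{lem:inter-same}. The paper's proof is more terse about the concatenation step, but the argument is the same.
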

\begin{proof}
Since $f_1,f_2$ are homotopic through an admissible homotopy, they are admissible. As in the construction leading to (\ref{eq:I2-def}) we can find two smooth admissible maps $g_1$ and $g_2$, homotopic through an admissible homotopy to $f_1$ and $f_2$, respectively, for which $g_1|_{X}$ and $g_2|_{X}$ are transversal to $Z$. Since $g_1$ and $g_2$ are homotopic through an admissible homotopy (which is a composition of admissible homotopies: $g_1$ to $f_1$, $f_1$ to $f_2$, and $f_2$ to $g_2$) from (\ref{eq:I2-def}) and Lemma \ref{lem:inter-same} we obtain
\[I_2(f_1,X,Z)=\#g_1^{-1}(Z) \mbox{ mod } 2=\#g_2^{-1}(Z) \mbox{ mod } 2=I_2(f_2,X,Z),\]
as required.
\end{proof}}

\begin{lemma}
Let $f$ be an admissible map. If $I_{2}\left(  f,X,Z\right)  \neq0$ then
$f\left(  X\right)  \cap Z$ is nonempty.
\end{lemma}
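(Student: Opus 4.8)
The plan is to argue by contradiction: assume $f(X)\cap Z=\emptyset$ and derive $I_{2}(f,X,Z)=0$. The first step I would carry out is to upgrade this assumption to the stronger statement that the compact set $f(\overline{X})$ is disjoint from the closed set $\overline{Z}$. This is exactly where admissibility of $f$ is used. From $f(\overline{X})\cap\partial Z=\emptyset$ together with $\overline{Z}=Z\cup\partial Z$ we get $f(\overline{X})\cap\overline{Z}=f(\overline{X})\cap Z$; and from $f(\partial X)\cap\overline{Z}=\emptyset$ together with $\overline{X}=X\cup\partial X$ we get $f(\overline{X})\cap Z=f(X)\cap Z$. Hence $f(\overline{X})\cap\overline{Z}=f(X)\cap Z=\emptyset$.

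Next I would fix an auxiliary metric on the smooth manifold $Y$. Since $f(\overline{X})$ is compact (the continuous image of the compact set $\overline{X}$), $\overline{Z}$ is closed in $Y$, and the two are disjoint, there is some $\varepsilon>0$ with $\mathrm{dist}\bigl(f(\overline{X}),\overline{Z}\bigr)>\varepsilon$. Now I would invoke the construction of the intersection number: by the discussion preceding (\ref{eq:I2-def}), $I_{2}(f,X,Z)=\#g^{-1}(Z)\,\mathrm{mod}\,2$ for some admissible map $g:\overline{X}\to Y$, homotopic to $f$ through an admissible homotopy, with $g|_{X}$ smooth and transversal to $Z$, and, crucially for us, with $g$ chosen arbitrarily close to $f$. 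Picking such a $g$ so close to $f$ that $\sup_{x\in\overline{X}}\mathrm{dist}\bigl(f(x),g(x)\bigr)<\varepsilon$, we obtain $g(\overline{X})\cap\overline{Z}=\emptyset$, hence $g^{-1}(Z)=\emptyset$, hence $\#g^{-1}(Z)=0$, and therefore $I_{2}(f,X,Z)=0$, contradicting the hypothesis $I_{2}(f,X,Z)\neq0$. It follows that $f(X)\cap Z\neq\emptyset$.

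I expect the only point requiring care to be the first step, namely checking that the hypothesis $f(X)\cap Z=\emptyset$ genuinely forces $f(\overline{X})$ and $\overline{Z}$ to be disjoint; this is a short but essential use of the two defining conditions of admissibility of $f$. Everything after that is a routine compactness-plus-small-perturbation argument, relying only on the already-established fact (used in defining $I_2$) that an admissible map can be approximated arbitrarily closely by an admissible map that is smooth and transversal to $Z$ on $X$ and homotopic to it through an admissible homotopy.
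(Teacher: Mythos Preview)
Your proof is correct and follows essentially the same route as the paper's: argue the contrapositive, use admissibility to upgrade $f(X)\cap Z=\emptyset$ to $f(\overline{X})\cap\overline{Z}=\emptyset$, then approximate by a nearby smooth $g$ that still misses $\overline{Z}$ and conclude $I_2=0$. The only cosmetic difference is that the paper notes such a $g$ is \emph{vacuously} transversal to $Z$ (empty intersection), so no separate transversality perturbation is needed, whereas you invoke the full transversal approximation from the construction of $I_2$; both are fine.
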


\begin{proof}
We will show that if $f\left(  X\right)  \cap Z=\emptyset$ then $I_{2}\left(
f,X,Z\right)  =0$.

Assume that $f\left(  X\right)  \cap Z=\emptyset$. By admissibility
$f(\partial X)\cap\overline{Z}=\emptyset$ and $f(\overline{X})\cap\partial
Z=\emptyset$, so then $f(\overline{X})\cap\overline{Z}=\emptyset.$

We can approximate $f|_{X}$ by an arbitrarily close smooth map $g: X
\rightarrow Y$, homotopic to $f|_{X}$. We can extend this $g$ to $\overline
{X}$ in the natural way to obtain $g:\overline{X}\rightarrow Y$. We can take
this $g$ close enough so that it is homotopic to $f$ by an admissible homotopy
and $g(\overline{X})\cap\overline{Z}=\emptyset.$ Since the intersection of
$g(X)$ with $Z$ is empty and $g|_{X}$ is smooth, it is transversal to $Z$ (an
empty intersection is by definition transversal), and%
\[
I_{2}\left(  f,X,Z\right)  =I_{2}\left(  g,X,Z\right)  =\#g^{-1}\left(
Z\right)  \,\mathrm{mod}\,2=0,
\]
as required.
\end{proof}

\begin{lemma}
Let $f:X\rightarrow Y$ be an admissible map. If $V$ is an open subset of $X$
such that $f\left(  X\right)  \cap Z=f\left(  V\right)  \cap Z$, and $f\left(
\partial V\right)  \cap\overline{Z}=\emptyset$ then
\[
I_{2}(f,X,Z)=I_{2}\left(  f|_{\overline{V}},V,Z\right)  .
\]

\end{lemma}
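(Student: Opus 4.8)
The plan is to reduce everything to the transversal case, where the excision property becomes an almost tautological statement about counting points. First I would observe that the hypotheses $f(\partial V)\cap\overline{Z}=\emptyset$ together with admissibility of $f$ (so $f(\partial X)\cap\overline{Z}=\emptyset$ and $f(\overline{X})\cap\partial Z=\emptyset$) guarantee that $f|_{\overline V}:\overline V\to Y$ is itself admissible: indeed $\partial V\subset\partial\overline V$ consists either of points of $\partial X$ or of the ``interior wall'' of $V$ inside $X$, and on both parts $f$ misses $\overline Z$; and $f(\overline V)\subset f(\overline X)$ misses $\partial Z$. So $I_2(f|_{\overline V},V,Z)$ is well-defined.

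Next I would pass to a smooth transversal representative carefully, so that the \emph{same} map works on both sides of the claimed equality. Using the construction behind \eqref{eq:I2-def}, choose a smooth map $g:\overline X\to Y$ homotopic to $f$ through an admissible homotopy $G:[0,1]\times\overline X\to Y$, with $g|_X$ transversal to $Z$, and with $g$ close enough to $f$ that (i) $g(\partial V)\cap\overline Z=\emptyset$ — possible because $f(\partial V)\cap\overline Z=\emptyset$ and $\partial V$ is compact with $\overline Z$ closed — (ii) $G([0,1]\times\partial V)\cap\overline Z=\emptyset$, and (iii) $G([0,1]\times\overline V)\cap\partial Z=\emptyset$. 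Condition (iii) is automatic since $G([0,1]\times\overline X)\cap\partial Z=\emptyset$ by admissibility of $G$. Then $g|_{\overline V}$ is admissible with $g|_V$ smooth and transversal to $Z$ (restriction of a transversal map to an open set is transversal), so
\[
I_2(f,X,Z)=\#g^{-1}(Z)\,\mathrm{mod}\,2,\qquad
I_2(f|_{\overline V},V,Z)=\#\big(g|_V\big)^{-1}(Z)\,\mathrm{mod}\,2 .
\]
Here the first equality uses Lemma~\ref{lem:hom-prop} (the homotopy property) applied to the admissible homotopy $G$, and the second uses Lemma~\ref{lem:hom-prop} applied to $G|_{[0,1]\times\overline V}$, which is an admissible homotopy from $f|_{\overline V}$ to $g|_{\overline V}$ by (ii)--(iii).

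Finally I would close the gap between $g^{-1}(Z)$ and $(g|_V)^{-1}(Z)$. We have $g(X)\cap Z = g(V)\cap Z$: this is the excision hypothesis $f(X)\cap Z=f(V)\cap Z$ transported to $g$, which needs a small argument — since $g$ is admissible and $G$ admissible, and since $\overline Z$ is closed while $g$ and $f$ are uniformly close, any point of $g^{-1}(Z)$ lies arbitrarily near a point of $f^{-1}(\overline Z)\subset f^{-1}(Z)\cup f^{-1}(\partial Z)=f^{-1}(Z)\subset V$, and $V$ is open, so $g^{-1}(Z)\subset V$ for $g$ close enough; hence $g^{-1}(Z)=(g|_V)^{-1}(Z)$ as sets. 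Therefore $\#g^{-1}(Z)=\#(g|_V)^{-1}(Z)$ and the two intersection numbers coincide. The main obstacle is the bookkeeping in this last paragraph: ensuring that ``$g$ sufficiently close to $f$'' can be achieved \emph{simultaneously} with transversality and with the homotopy $G$ staying admissible over both $\overline X$ and $\overline V$; once the approximation is set up so that $g^{-1}(Z)$ is trapped inside $V$ away from $\partial V$, the counting identity is immediate.
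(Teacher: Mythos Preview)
Your approach is the same as the paper's: approximate $f$ by a smooth transversal $g$ via a homotopy that is admissible simultaneously for $\overline X$ and for $\overline V$, then count preimages. The paper's proof is terser---it simply says one can take $g$ close enough to $f$ that $g(X)\cap Z=g(V)\cap Z$ and then writes $I_2(f,X,Z)=I_2(g,X,Z)=\#g^{-1}(Z)\bmod 2=I_2(g|_{\overline V},V,Z)=I_2(f|_{\overline V},V,Z)$---but the skeleton is identical, and your explicit check that $G|_{[0,1]\times\overline V}$ is admissible is exactly the content of the paper's parenthetical ``admissible both for $X$ and $V$''.

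There is one genuine slip in your last paragraph, shared with the paper: the inclusion $f^{-1}(Z)\subset V$ does \emph{not} follow from the stated hypothesis $f(X)\cap Z=f(V)\cap Z$. Take $X=(-3,3)$, $Y=\mathbb R$, $Z=\{0\}$, $f(x)=x^2-1$, $V=(0.5,1.5)$: then $f(X)\cap Z=\{0\}=f(V)\cap Z$ and $f(\partial V)\cap\overline Z=\emptyset$, yet $-1\in f^{-1}(Z)\setminus V$ and in fact $I_2(f,X,Z)=0\neq 1=I_2(f|_{\overline V},V,Z)$. So the lemma as literally stated is false; what is needed (and what the paper actually verifies in every application of excision, e.g.\ in showing \eqref{eq:excision-ok}) is the stronger hypothesis $f(X\setminus V)\cap Z=\emptyset$, i.e.\ $f^{-1}(Z)\subset V$. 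With that in hand your compactness argument for $g^{-1}(Z)\subset V$ goes through: if $x_n\in\overline X\setminus V$ with $g_n(x_n)\in Z$ and $g_n\to f$, a limit point $x^*$ lies in $\overline{X\setminus V}$ with $f(x^*)\in\overline Z$, contradicting either admissibility (if $x^*\in\partial X$) or $f^{-1}(Z)\subset V$ (if $x^*\in X$). Your phrasing ``any point of $g^{-1}(Z)$ lies arbitrarily near a point of $f^{-1}(\overline Z)$'' is a slightly loose way of expressing this semicontinuity, but the underlying idea is correct once the right hypothesis is in place.
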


\begin{proof}
We see that $f|_{\overline{V}}$ is admissible since%
\[
f\left(  \overline{V}\right)  \cap\partial Z\subset f\left(  \overline
{X}\right)  \cap\partial Z=\emptyset.
\]

We can find a $g$ arbitrarily close to $f$, homotopic through an admissible
homotopy (admissible both for $X$ and $V$) so that $g|_{X}$ is smooth. If
$g|_{X}$ is not transversal to $Z$, then we can make an arbitrarily small
modification of $g|_{X}$ to make it transversal. We can take $g$ close enough
to $f$ so that $g\left(  X\right)  \cap Z=g\left(  V\right)  \cap Z$. Since
$g|_{X}$ is transversal to $Z$, $g|_{V}$ is also transversal to $Z$. From
(\ref{eq:I2-for-transversal}--\ref{eq:I2-def})
\begin{eqnarray*}
I_{2}(f,X,Z)&=I_{2}(g,X,Z)=\#g^{-1}\left(  Z\right)  \,\mathrm{mod}%
\,2 \\ & \qquad \qquad=I_{2}\left(  g|_{\overline{V}},V,Z\right)  =I_{2}\left(  f|_{\overline{V}%
},V,Z\right)  ,
\end{eqnarray*}
as required.
\end{proof}

\correction{}{\section{Code for the computer assisted proof\label{sec:app-code}}
The program validates that $D\overset{f}{\Longrightarrow}D$. We write out the code and follow with comments.}

\begin{Verbatim}[fontsize=\footnotesize,commandchars=\!\%\@]
#include <iostream> 
#include "capd/capdlib.h" !h%1@
using namespace std; using namespace capd;

const interval mu=interval(1)/interval(10);

interval part(interval x,int N, int k) !h%2@
   { return x.left()+k*(x.right()-x.left())/N+(x-x.left())/N; }
    
interval hx(interval alpha,interval x,interval y) !h%3@
   { return alpha*2*x+(1-alpha)*(-8*x/5+4*power(x,3)+x*y/2); }
   
interval hy(interval alpha,interval theta,interval x,interval y) !h%3@
   { return (1-alpha)*(mu*y+2*sin(theta)/5+x*cos(theta)); }
   
bool ExitCondition(interval alpha,interval Bu,interval Bs) !h%4@
{   
   if(not(hx(alpha,Bu.left() ,Bs)<Bu)) return 0;
   if(not(hx(alpha,Bu.right(),Bs)>Bu)) return 0;
   return 1;
}
bool EntryCondition(interval alpha,interval theta,interval Bu,interval Bs,int N) !h%5@
{
   for(int i=0;i<N;i++)
   {
      for(int j=0;j<N;j++)
      {
         interval x=hx(alpha,part(Bu,N,i),part(Bs,N,j));
         interval y=hy(alpha,theta,part(Bu,N,i),part(Bs,N,j));
         if(not(x<Bu))
            if(not(x>Bu))
               if(not(subsetInterior(y,Bs))) return 0;
      }   
   }
   return 1;
}
int main()
{
   interval alpha=interval(0.0,1.0);
   interval Lambda=interval(2)*interval::pi()*interval(0.0,1.0);  
   interval Bu=interval(-1.0,1.0);     
   interval Bs=interval(-1.2,1.2); 
   for(int k=0;k<4;k++)
   {
      for(int i=0;i<100;i++)
      {
         if(ExitCondition(part(alpha,4,k),Bu,Bs)==0) return 0; !h%6@
         if(EntryCondition(part(alpha,4,k),part(Lambda,100,i),Bu,Bs,50)==0) return 0;
      }
   }
   cout << "proof complete" << endl; return 1; !h%7@
}
\end{Verbatim}

\correction{}{
\begin{itemize}
\item[\ch{1}] The code is based on the CAPD\footnote{Computer Assisted Proofs in Dynamics} library for {\tt C++}. To download and install the library follow the instructions found at http://capd.ii.uj.edu.pl.
\item[\ch{2}] This routine computes the {\tt k}-th part out of {\tt N} of the interval {\tt x}. The indexing is {\tt k=0,...,N-1}. For example, if  {\tt x=$[$1.0,2.0$]$} then for {\tt N=4} the {\tt 0}-th part is {\tt $[$1.0,1.25$]$} and the {\tt 3}-rd part is  {\tt $[$1.75,2.0$]$}.
\item[\ch{3}] These routines are used for the homotopy (\ref{eq:homotopy-CAP}) along the $x,y$ coordinates. Condition (\ref{eq:deg-covering}) follows directly from (\ref{eq:homotopy-CAP}) so we need to validate (\ref{eq:topological-expansion}--\ref{eq:topological-contraction}), for which local projections onto $x,y$ are sufficient. 
\item[\ch{4}] 
We check that $h({\tt alpha}\times D^-)\cap D = \emptyset$ and return {\tt 1} if this is validated and {\tt 0} otherwise. This function will later be used to check (\ref{eq:topological-expansion}).
\item[\ch{5}] This function is used to validate that $h({\tt alpha} \times D_{\tt theta})\cap D^+=\emptyset$. This is later used to validate (\ref{eq:topological-contraction}). The test is performed by subdividing $D_{\tt theta}$ into $N^2$ cubes and checking that the image of each of them does not intersect $D^+$.
\item[\ch{6}] This is the core of the program, where we validate (\ref{eq:topological-expansion}--\ref{eq:topological-contraction}). We do so by subdividing the parameter interval $[0,1]$ into four fragments and subdividing $\Lambda$ into $100$ parts. 
\item[\ch{7}] Once the program reaches this point we are sure that all the needed conditions are validated. The program takes a fraction of a second, running on a standard laptop.
\end{itemize}}

\bibliography{bibl}

\end{document}